\DeclareMathOperator{\csch}{csch}
\numberwithin{equation}{section}
\newtheorem{definition}{Definition}[section]
\newtheorem{theorem}{Theorem}[section]
\newtheorem{lemma}[theorem]{Lemma}
\newtheorem{proposition}[theorem]{Proposition}
\newtheorem{remark}[theorem]{Remark}
\newcommand{\bke}[1]{\left ( #1 \right )}
\newcommand{\bkt}[1]{\left [ #1 \right ]}
\newcommand{\bket}[1]{\left \{ #1 \right \}}
\newcommand{\norm}[1]{\left \| #1 \right \|}
\newcommand{\abs}[1]{\left | #1 \right |}
\newcommand\al{\alpha}
\newcommand\be{\beta}
\newcommand\ga{\gamma}
\newcommand\de{\delta}
\newcommand\ve{\varepsilon}
\newcommand\e {\varepsilon}
\renewcommand\th{\theta}
\newcommand\ka{\kappa}
\newcommand\la{\lambda}
\newcommand\si{\sigma}
\newcommand\om{\omega}
\newcommand\Ga{\Gamma}
\newcommand\De{\Delta}
\newcommand\La{\Lambda}
\newcommand\Rg{\mathscr{R}_g}
\newcommand{\R}{\mathbb{R}}
\newcommand{\CC}{\mathbb{C}}
\renewcommand{\Re} {\mathop{\mathrm{Re}}\nolimits}
\renewcommand{\Im} {\mathop{\mathrm{Im}}\nolimits}
\newcommand{\pd}{\partial}
\newcommand{\nb}{\nabla}
\newcommand{\wt}[1]{\widetilde {#1}}
\renewcommand{\bar}[1]{\overline{#1}}
\newcommand{\EQ}[1]{\begin{equation}\begin{split} #1 \end{split}\end{equation}}
\newcommand{\EQN}[1]{\begin{equation*}\begin{split} #1 \end{split}\end{equation*}}
\DeclarePairedDelimiter{\oldnormaux}{\bracevert}{\bracevert}
\NewDocumentCommand{\oldnorm}{som}{%
  \IfBooleanTF{#1}
    {\oldnormaux*{#3}}
    {\IfNoValueTF{#2}
       {\oldnormaux*{\vphantom{dq}#3}}
       {\oldnormaux[#2]{#3}}%
    }%
}
\begin{document}
\title[Relaxation of oscillating bubbles]{Thermal relaxation toward equilibrium and periodically pulsating spherical bubbles 
in an incompressible liquid }

\author[C. Lai]{Chen-Chih Lai}
\address{\noindent
Department of Mathematics,
Columbia University , New York, NY, 10027, USA}
\email{cl4205@columbia.edu}

\author[M. I. Weinstein]{Michael I. Weinstein}
\address{\noindent
Department of Applied Physics and Applied Mathematics and Department of Mathematics,
Columbia University , New York, NY, 10027, USA}
\email{miw2103@columbia.edu}

\begin{abstract}
We study the radial relaxation dynamics toward equilibrium and time-periodic pulsating spherically symmetric gas bubbles in an incompressible liquid due to thermal effects.  The asymptotic model \cite{Prosperetti-JFM1991,BV-SIMA2000} is one where the pressure within the gas bubble is spatially uniform
 and satisfies an ideal gas law, relating the pressure, density and temperature of the gas.  The temperature of the surrounding liquid is taken to be constant and the behavior of the liquid pressure at infinity is prescribed to be constant or periodic in time.
In \cite{LW-vbas2022}, for the case where the liquid pressure at infinity  is a positive constant, we proved the existence of a one-parameter manifold of spherical equilibria, parameterized by the bubble mass, and further proved that it  is  a nonlinearly and exponentially asymptotically stable center manifold.

In the present article, we first 
refine the exponential time-decay estimates, via a study of the linearized dynamics subject to the constraint of fixed mass. We obtain, in particular, estimates for the exponential decay rate constant, which highlight the interplay between the effects of thermal diffusivity and the liquid viscosity. 

We then study the nonlinear radial dynamics of the bubble-fluid system subject to a pressure field at infinity which is a small-amplitude and time-periodic perturbation about a positive constant. We prove that nonlinearly and exponentially asymptotically stable time-periodically pulsating solutions 
of the nonlinear (asymptotic) model exist for all sufficiently small forcing amplitudes. The existence of such states is formulated as a fixed point problem for the Poincar\'e return map, and the existence of a fixed point makes use of our (constant mass constrained) exponential time-decay estimates of the linearized problem.  
\end{abstract}
\maketitle





\tableofcontents

\section{Introduction; the model and main results}

Consider a spherical gas bubble, immersed in a liquid. When perturbed from its equilibrium radius 
(for which the pressure inside the bubble is balanced with the pressure at infinity), the bubble will undergo expansion and contraction. There are different mechanisms that can contribute to the damping of these oscillations. Most important are viscous forces at the gas-liquid interface, acoustic radiation of sound waves toward infinity if the surrounding liquid is compressible,
 and  thermal conduction between the gas in the bubble and the surrounding liquid. Other effects which arise are 
 mass diffusion and the evaporation-condensation of vapor  \cite{Prosperetti-ApplSciRes1982}, \cite{Fanelli-Prosperetti-Reali-Acustica(47)1981}, \cite{Fanelli-Prosperetti-Reali-Acustica(49)1981}, 
 in which mass is transferred across the bubble - fluid interface.
 In this paper we focus on the relaxation dynamics of a gas bubble, immersed in an incompressible fluid,
 due to thermal effects. In the class of models we consider, the mass of the gas bubble remains constant throughout the evolution; there is no mass-transfer across the bubble - fluid interface.


The dynamics of a gas bubble in a liquid depends strongly on the variation of  pressure of the  gas inside the bubble.
When thermal effects are neglected, the gas can be assumed to satisfy a polytropic law, in which
the gas pressure, $p_g$,  is proportional to a reciprocal power of the bubble volume: $p_g= p_*(R_*/R)^{3n}$. Here $R$ denotes the bubble radius, $n$ is the polytropic constant, and $p_*$ and $R_*$ denote  equilibrium values. 
Such models have been studied extensively in  \cite{Lauterborn-JAcoustSocAm1976, Prosperetti-JAcoustSocAm1974, Prosperetti-JAcoustSocAm1975, Keller-Miksis-JAcoustSocAm1980, Lastman-Wentzell-JAcoustSocAm1981, Lastman-Wentzell-JAcoustSocAm1982,SW-SIMA2011}. For reviews of the subject, see e.g. \cite{Plesset-Prosperetti-1977, Prosperetti-Ultrasonics1984a, Prosperetti-Ultrasonics1984b, Apfel-Ultrasonics1981, Leighton2011review}.
Corrections to the polytropic assumption, to account for thermal effects, were explored in, for example, in
\cite{Flynn-JAcoustSocAm1975,Prosperetti-JAcoustSocAm1977,Prosperetti-Ultrasonics1984a}.
%
 %
 
A more accurate model was proposed by Prosperetti in \cite{Prosperetti-JFM1991}. This model incorporates the dissipative effects of thermal diffusion, and of liquid viscosity at the bubble-fluid interface.  In the asymptotic regime considered,  the pressure within the gas bubble is taken to be spatially uniform
 and to satisfy the ideal gas law, relating pressure, density and temperature: 
  $p_g(t) = \Rg T_g(r,t) \rho_g(r,t)$. Here, $\Rg$ is the specific gas constant, and $T_g$ and $\rho_g$ are the temperature and the density of the gas. The temperature of the liquid surrounding  the bubble is taken to be constant, and the liquid pressure at infinity is taken to be a positive constant or a small amplitude time-periodic pertrubation of a spatially uniform positive constant pressure.
  The assumption that the pressure within the gas is uniform is justified in  \cite[\S6]{Prosperetti-JFM1991}  and \cite[Appendix A]{BV-SIMA2000}.  In \cite{Prosperetti-JFM1991}, the radial relaxation dynamics in this model were studied  for time-periodic and spatially uniform fluid pressure at infinity, in the context of the linearized equations and other simplifying approximations. 
Thermal dissipation rates were investigated in \cite{Prosperetti-JFM1991} within two regimes: the nearly isothermal regime corresponding to rapid thermal diffusion, and the nearly adiabatic regime corresponding to slow thermal diffusion.
In these two regimes, Prosperetti derived explicit approximate thermal dissipation rates and presented several numerical results.
Over the decades, the model has been further studied via different types of approximations. 
For example, quadratic and biquadratic approximations are used in \cite{ZP-JFM2020} to reduce the full PDE model to a simple ODE model for which the computational cost in simulation is significantly reduced. 
See also \cite[Sec. 3.3.4]{Hegedus-thesis2018} for efficiency comparison for Galerkin polynomial, Galerkin trigonometric, Galerkin hat function, spectral collocation and finite difference methods.
In  \cite{BV-SIMA2000} this model --with zero fluid interfacial  viscosity-- was deduced using non-dimensionalization and order of magnitude comparisons of non-dimensional parameters arising  in  sonoluminescence experiments \cite{barber1991observation, barber1992light, barber1994sensitivity}. The article  \cite{BV-SIMA2000} presents rigorous results on local in time well-posedness  and  Lyapunov stability of spherically symmetric equilibrium bubbles, relative to spherically symmetric perturbations of the same mass.

In \cite{LW-vbas2022}, we gave a rigorous proof that the model of \cite{Prosperetti-JFM1991}, with constant pressure at infinity, has a one-parameter manifold  of spherically symmetric equilibrium bubbles, parameterized by the bubble mass, and that this manifold of equilibria is nonlinearly asymptotically stable with exponential rate of convergence under the nonlinear dynamics; an initially small spherically symmetric perturbation of an equilibrium bubble evolves, as time advances, toward a spherical bubble with the mass of the initial data exponentially fast.

In this paper, we deepen our understanding of the dynamics of bubble oscillations within this model, and extend our study to time-periodic pulsating bubbles induced by external pressure forcing.
 We first establish exponential time-decay of solutions for the linearized evolution, subject to a linear constraint on the initial data, implied by mass-conservation. As part of this analysis, we obtain estimates for the exponential decay rate constant which highlights the interplay between  thermal diffusivity and liquid viscosity.  (In this model, the surrounding liquid is incompressible. Hence, acoustic radiation damping is not present in the model.)
Finally, we study the nonlinear dynamics of the bubble-fluid system forced by a positive, spatially uniform, nearly constant, periodic in time pressure
field at infinity. Using our exponential time-decay estimates of the linearized problem, we prove that the Poincar\'e return map of this periodically forced nonlinear system has a fixed point, which is nonlinearly asymptotically stable. Hence, time-periodic forcing gives rise to nonlinearly exponentially asymptotically stable periodically pulsating bubbles. 
%

In the following subsections of this introduction we provide details of the mathematical model and outline our main results more precisely. 
We then draw comparisons with previous work.

\subsection{Model of a gas bubble in a fluid }

 
Our model of a gas bubble in a liquid, in the presence of thermal effects in the gas and liquid viscosity at the bubble-gas interface, is a free boundary problem of differential equations describing the dynamics of the gas inside the bubble (governed by the compressible Navier-Stokes equations) and the liquid outside the bubble (governed by the incompressible Navier-Stokes equations) and their interaction with the bubble surface \cite[(3.1)-(3.4)]{LW-vbas2022}.
After a spherically symmetric reduction \cite[Section 5]{LW-vbas2022}, the problem is reduced to a system
consisting of  a quasilinear parabolic partial differential equation for the gas density $\rho(r,t)$ coupled to ordinary differential equations which determine the radius of the bubble, $R(t)$, and the constraint that the mass of the bubble is conserved \cite{LW-vbas2022,BV-SIMA2000}.  
As explained above, the gas pressure, $p(t)$, is assumed to be spatially uniform throughout the bubble. The ideal gas law, which relates pressure, temperature, and density, together with the assumption of constant liquid temperature, implies that 
 $p(t)$ is proportional to the density on the bubble surface, $\rho(R(t),t)$.  
 
 For $t>0$, the full mathematical model is
\begin{subequations}
\label{red-eqns}
\begin{align}
\pd_t\rho(r,t) &= 
\frac{\ka}{\ga c_v} \De_r\log\rho(r,t) + \frac{1}{\ga} \frac{\pd_t p(t)}{ p(t)}\Big(  \frac13 r \pd_r\rho(r,t) 
+ \rho(r,t) \Big) ,\quad 0\le r\le R(t), \label{eq-bv-3.10prime}\\
\dot R(t) &= -\frac\ka{\ga c_v} \frac{\pd_r\rho(R(t),t)}{\bke{\rho(R(t),t)}^2} - \frac{R(t)}{3\ga} \frac{\pd_t p(t)}{p(t)},\label{eq-bv-3.15prime}\\
\rho(R(t),t) &= \frac1{\Rg T_\infty} \bkt{p_\infty(t) + \frac{2\si}{R(t)} 
+ 4\mu_l \frac{\dot R}R
+ \rho_l\bke{R(t)\ddot R(t) + \frac32 (\dot R(t))^2} }.\label{eq-bv-3.16prime}\\
&\notag\\
&\hspace{-3.5cm}\text{The gas pressure, $p(t)$, is related to $\rho$ and $R$ via the ideal gas constituitive law: }\notag\\
&\notag\\
p(t) &= \Rg T_\infty \rho(R(t),t),\quad t>0.\label{eq-constitutive-relation}
\end{align}
\end{subequations}
In \eqref{red-eqns}, $\kappa$ denotes the thermal conductivity of the gas and $\mu_l$ the liquid viscosity. The adiabatic   constant, $\ga \equiv 1 + \frac{\Rg}{c_v} = \frac{c_p}{c_v}>1$, is determined by $\Rg$, the specific gas constant, $c_p$, the heat capacity at constant pressure and $c_v$, the heat capacity at constant volume.
The spatially uniform far-field liquid pressure $p_\infty(t)$ is prescribed. 
Equation \eqref{eq-bv-3.16prime} arises from the balance of stresses at the gas-fluid interface.

Equations \eqref{eq-bv-3.10prime}, \eqref{eq-bv-3.15prime} are equivalent to conservation of the bubble mass (no mass-transfer across the bubble-fluid interface). To see this, note that the mass inside the bubble is given by:
\EQ{\label{eq-conserve-mass}
M &= {\rm Mass}[\rho,R] := \int_{B_{R(t)}} \rho(x,t)\, dx,\quad t>0.
}
Differentiation yields: 
\EQN{
\frac{d}{dt} \int_{B_{R(t)}} \rho\, dx 
&= \int_{B_{R(t)}} \pd_t\rho\, dx + \int_{\pd B_{R(t)}} \rho\, \dot{R}\, dS\\
&= \int_{B_{R(t)}} \frac{\ka}{\ga c_v}\, \De_r\log\rho\, dx + \frac1\ga\, \frac{\pd_tp(t)}{p(t)} \int_{B_{R(t)}} \bke{\frac13 r\pd_r\rho + \rho}\, dx + \int_{\pd B_{R(t)}} \rho\, \dot{R}(t)\, dS\\
&= \frac{\ka}{\ga c_v}\, \frac{\pd_r\rho(R(t),t)}{\rho(R(t),t)}\cdot 4\pi R^2(t) + \frac{R(t)}{3\ga}\, \frac{\pd_tp(t)}{p(t)}\, \rho(R(t),t)\cdot 4\pi R^2(t) + \rho(R(t),t)\dot R(t)\cdot4\pi R^2(t)\\
&= 4\pi R^2(t)\rho(R(t),t) \bkt{ \frac{\ka}{\ga c_v}\, \frac{\pd_r\rho(R(t),t)}{(\rho(R(t),t))^2} + \frac{R}{3\ga}\, \frac{\pd_tp(t)}{p(t)} + \dot R(t) },
}
which vanishes  if and only if \eqref{eq-bv-3.15prime} holds.

Hence, our model \eqref{red-eqns} is equivalent to the following family of equations in which 
 the bubble mass parameter,  $M$, is made explicit:
\begin{subequations}
\label{red-eqns-mass}
\begin{align}
\pd_t\rho(r,t) &= 
\frac{\ka}{\ga c_v} \De_r\log\rho(r,t) + \frac{1}{\ga} \frac{\pd_t p(t)}{ p(t)}\Big(  \frac13 r \pd_r\rho(r,t) 
+ \rho(r,t) \Big) ,\quad 0\le r\le R(t), \label{eq-bv-3.10prime-mass}\\
M &= \int_{B_{R(t)}} \rho(x,t)\, dx,\quad t>0,\label{eq-bv-3.15prime-mass}\\
\rho(R(t),t) &= \frac1{\Rg T_\infty} \bkt{p_\infty(t) + \frac{2\si}{R(t)} 
+ 4\mu_l \frac{\dot R}R
+ \rho_l\bke{R(t)\ddot R(t) + \frac32 (\dot R(t))^2} }.\label{eq-bv-3.16prime-mass}\\
p(t) &= \Rg T_\infty \rho(R(t),t),\quad t>0.\label{eq-constitutive-relation-mass}
\end{align}
\end{subequations}
We study the system \eqref{red-eqns-mass} with prescribed initial conditions
\begin{equation}\label{eq:IC}
\quad R(t=0)=R_0, \quad \dot{R}(0)=\dot{R}_0, \quad \textrm{and}\  \rho(r,t=0)=\rho_0(r),\ 0\le r\le R_0.
 \end{equation} 

In \cite{LW-vbas2022}, we studied the equilibria of \eqref{red-eqns},  for a prescribed constant far-field pressure $p_{\infty,*}$. These are  given by the one-parameter family of spherically symmetric bubbles  \eqref{red-eqns}, parametrized by the total mass of the gas bubble, $M$:
\EQ{\label{eq-equilib-manifold}
\mathcal{M}_*&= \bket{(\rho_*[M],R_*[M], \dot R_*=0): 0<M<\infty }.
}
 For each $M>0$,  $(\rho_*[M],R_*[M])$ denotes the unique solution, satisfying $\rho_*>0$ and $R_*>0$, of the pair of the algebraic equations:
\EQ{\label{eq-system-alg}
\begin{cases}
\dfrac{4\pi}3 \rho_* R_*^3 = M, \\
\rho_* = \dfrac1{\Rg T_\infty} \bke{p_{\infty,*} + \dfrac{2\si}{R_*}}.
\end{cases}
}
In \cite[Theorem 6.7]{LW-vbas2022}, we introduced a metric which measures the distance from the state of the system $(\rho(r,t), R(t),\dot{R}(t))$
 to the manifold of equilibria $\mathcal{M}_*$ and proved that
 if $(\rho_0(r), R_0, \dot{R}_0)$ is an initial condition whose metric distance to $\mathcal{M}_*$  is sufficiently small, then the metric distance of   $(\rho(r,t), R(t), \dot{R}(t))$, to $\mathcal{M}_*$ tends to zero exponentially fast as $t\to\infty$. 
 At the heart of our proof are: a proof of nonlinear asymptotic stability with no explicit rate of time-decay
 (based on the energy dissipation law in \cite{BV-SIMA2000}), spectral and time-decay analysis of the corresponding linearized evolution, and a center manifold analysis in which we bootstrap weak time-decay to exponential time-decay of perturbations.
These results extend to the case where the constant far-field pressure $p_{\infty,*}$ replaced by a time dependent far-field pressure $p_\infty(t)$ which approaches $p_{\infty,*}$ sufficiently fast.

\subsection{Summary of the main results} 
{\ }\\ \vspace{-0.33cm}

(I) \emph{Linear asymptotic stability of $\mathcal{M}_*$.} We present two approaches to linear asymptotic stability.
\begin{enumerate}
\item In Theorem \ref{thm-exp-decay-LT} we prove linear asymptotic stability, with an exponential rate of time-decay, of equilibrium solutions $(\rho_*,R_*)$ of \eqref{red-eqns} with constant far-field pressure. 
That is,  under the linearized dynamics, perturbations decay to zero with a rate $\sim \exp(-\beta t)$, where the decay constant, $\beta$, is strictly positive and depends on the parameters in \eqref{red-eqns}.
For such time-decay,  the initial density need only be in $L^2$ and the decay is in the $L^2$ sense.
Decay in H\"older ($C^{2+2\alpha}$) spaces is deduced if the data are assumed to be more regular. The proof  of Theorem \ref{thm-exp-decay-LT} is via Laplace transform and a very detailed analysis centering on  a negative upper bound, $-\beta<0$, for the real parts of the (infinitely many) poles of the resolvent of $\mathcal{L}_-$, the operator which generates the linear time-dynamics.
 
Our  lower bound on the exponential rate, $\beta>0$ is strictly positive for liquid viscosities $\mu_l\ge0$ (and thermal diffusivity $\kappa>0$). In  Appendix \ref{sec-compare-prosperetti} we compare our rigorous lower bound on $\beta$ with the approximations to  $\be$ given in previous results (\cite{Prosperetti-JFM1991}) in different regimes specified by the {\it thermal diffusivity parameter}, $\chi=\kappa/(c_p\rho_*)$: (i) $\chi$ large, the nearly isothermal regime, and (ii) $\chi$ small, the adiabatic regime. 

  \item A second result, of independent interest, on linear asymptotic stability is presented in Theorem \ref{thm-asymp-stab-linear}. 
  It is based on the observation that the full nonlinear dynamics satisfies a  total energy dissipation law.
 Rather than analyze an explicit representation of the solution of the linearized problem, we prove a dissipation law for a linearized energy, as well as  its local convexity. Structural aspects of the linearized equations,
 arising from the quasi-linearity of the full nonlinear problem, necessitate our imposing H\"older regularity  ($C^{2+2\alpha}$) in this energy argument. 
 Although conceptually elegant, no rate of decay emerges from our energy-based argument;
 $\mathcal{L}_-$, the linear operator generating the dynamics non-self-adjoint, and so it is the spectrum
 of the symmetric part of  $\mathcal{L}_-$, which we do not see how to access, which  holds
 the key to a decay rate by this method. 
\end{enumerate}

(II) \emph{Time-periodic nonlinear bubble oscillations for prescribed time-periodic far-field pressure
and their nonlinear asymptotic stability.} 
Consider the setting of a small-amplitude time-periodic far-field pressure, $P_\infty(t) = p_{\infty} + 
\psi(t;\om,A)$, with amplitude, $A$, and  frequency, $\om$.
Here $\psi(t;\om,A)$ is a function that is bounded and $2\pi/\om$- periodic in $t$. Further, for fixed $\om\in\R_+$, we assume that $\psi(t;\om,A) \to0$ as $A\to0$ uniformly for $t\in [0,2\pi/\om]$, e.g. $\psi(t;\om,A) = A\cos(\om t)$.
Theorem \ref{cor-period-exituniq} states that the system \eqref{red-eqns} admits a manifold of $2\pi/\omega$ time-periodic expanding and contracting bubble solutions, which parametrized by bubble mass $M$ and the pressure forcing amplitude $A$, which is taken to be sufficiently small.
Moreover, the $2\pi/\omega$ time-periodic solutions are nonlinearly and exponentially stable relative to small mass-preserving perturbations.
Theorem \ref{cor-period-exituniq} also shows the nonlinear asymptotic stability of the manifold of $2\pi/\omega$ time-periodic solutions and provides a lower bound on the decay exponent.
A comparison of our lower bound with the results of \cite{Prosperetti-JFM1991} is presented in Appendix \ref{sec-compare-prosperetti-per}.

Our results on asymptotic stability are in terms an $L^2-type$ norm of the perturbation. 
We remark that the geometric theory of semilinear parabolic equations (see, e.g. \cite{Henry-book1981})
 does not apply in the present setting since our equations are quasi-linear;
the types of maps which in  \cite{Henry-book1981} are contractions lose smoothness upon iteration. However, we can use the theory of quasi-linear parabolic PDEs to obtain global-in-time existence of unique solutions, for H\"older continuous data,  in higher-regular  H\"older  spaces.
Thus we can construct the Poincar\'e map and study the global-in-time dynamics near its fixed points (time-periodic solutions). The exponential time-decay estimates for the linearized evolution together with the above H\"older estimates enable us to prove exponential time-decay in the $L^2-type$ norm via the Duhamel's principle.
We note that one can bootstrap the $L^2$-exponential decay along with the H\"older estimates to get $C^{2+2\alpha}$-exponential decay via interpolation.

\subsection{Future directions}
The results of this article on nonlinear asymptotic stability of periodically pulsating bubbles for the model  \eqref{red-eqns}, and our earlier work \cite{LW-vbas2022} on spherical equilibrium bubbles,  hold only for the case of small-amplitude initial perturbations.
Indeed, this study relies on the local convexity of the total energy functional at an equilibrium, which can be used
to prove  first boundedness  \cite{BV-SIMA2000} and then time-decay \cite{LW-vbas2022} of solutions.
\emph{Is the equilibrium bubble nonlinearly asymptotically stable with respect to arbitrary, even  large, spherically symmetric perturbations? 
Do sufficiently large perturbations from equilibrium lead to a singularity within a finite time?}

A further interesting direction relates to the dynamics induced by a \underline{strong}  time-periodic  external (acoustic) pressure-field.  A low-dimensional ODE model for the nonlinear dynamics of the bubble radius, under a polytropic assumption for the gas, exhibits a complex sequence of bifurcations and chaotic motions
\cite{SBB:87}.    Do such behaviors arise in the PDE free-boundary model we have studied? One possibility is that, under strong forcing, the present model may exhibit singularity formation in finite time  or other complex behaviors.
In \cite{KP-JASA1989} the PDE model was studied numerically via a Galerkin method with the basis functions being polynomials, e.g. the shifted Chebyshev polynomials, even-ordered polynomials, and Legendre polynomials. (In contrast, our analysis makes use of Dirichlet eigenfunctions.)
Numerical evidence \cite[Figure 7]{KP-JASA1989} shows that bifurcations and chaos occur for large forcing pressure amplitude.

 Finally, analyzing the dynamics without the constraint of spherical symmetry would be of great mathematical and physical interest. For example, in the phenomenon of  \emph{sonoluminenscence} (see, for example, \cite{Brenner-sonoluminescence}), where
non-spherical deformations of the bubble surfaces ({\it shape modes}) are experimentally observed and play an important role. 
The analysis of the present and more accurate models without the constraint of spherical symmetry presents fascinating challenges.
In \cite{LW-vbas2022}, we presented a candidate for a more accurate model \cite[(2.1)-(2.4)]{LW-vbas2022} with which to consider this question.
Our study of spherically symmetric bubble dynamics in \cite{LW-vbas2022} and in the present paper could serve as a stepping stone toward the future study of asymmetric bubble dynamics, which is one of 
our motivations to work with the full PDE problem.
%

\subsection{Structure of the paper}\label{structure}

In Section \ref{linear-ev} we derive the linearized system of the unforced (with a constant far-field pressure) model \eqref{red-eqns-mass} around a fixed equilibrium.
We use a Dirichlet eigenfunction decomposition and the Laplace transform to obtain a representation formula for the solution, and prove that the solution exists for all time in the $L^2$ framework.
In Section \ref{pf-astab-norate} we present an independent study of the linearized system from an energy perspective. 
Using an energy method, we prove the asymptotic stability of the zero solution to the linearized system in the $C^{2+2\al}$ framework.
We then return to the study in the $L^2$ framework in Section \ref{sec-exp}, where
we use the Laplace transform representation to prove the exponential asymptotic stability of the zero solution, and present a rigorous estimate of the decay exponent.
In Section \ref{sec:forced-bubb} we study the time-periodically forced problem and prove the existence, uniqueness, and asymptotic stability of time-periodic solutions.
Appendix \ref{sec-compare-prosperetti} contains a detailed comparison of our rigorous lower bound on the decay exponent with the previous formal approximations \cite{Prosperetti-JFM1991}.

\subsection{Notation and conventions}

\begin{enumerate}
\item $B_R=\{x\in\mathbb{R}^3: |x|<R\}$
\item For a function $f(r)$ defined for $0<r<R$, we set $F(x) = f(|x|)$ for $x\in B_R$ and denote
$\norm{f}_{L^2_r(B_1)} = \norm{F}_{L^2_x(B_1)}$ and 
\EQ{\label{eq-holder-norm-def}
\norm{f}_{C^{2+2\al}_r} 
= \norm{F}_{C^{2+2\al}_x} 
= \max_{|\be|\le 2} \sup_{x\in B_R} |D^\be F(x)| + \sup_{\substack{x\neq y\\x,y\in B_R}} \frac{| D^2F(x) - D^2F(y) |}{|x-y|^{2\al}}.
}

\item Denote $\nb_r f = \nb_xF$ the derivative in radial direction and $\De_r f = \De_xF = \frac1{r^2}\pd_r(r^2\pd_rf)$ the radial part of the Laplace operator in $\mathbb R^3$.
\item For a state variable, such as the density $\rho$, if it corresponds to value of a constant equilibrium solution, then we denote it 
by $\rho_*$, and similarly for the values of other equilibrium state variables.
\item 
Throughout this paper, the following physical parameters and the equilibrium states
\[
\begin{array}{ccc}
\text{Thermal conductivity} & \cdots & \ka \\
\text{Thermal diffusivity} & \cdots & \chi \\
\text{Adiabatic constant} & \cdots & \ga  \\
\text{Heat capacity at constant volume} & \cdots &  c_v  \\
\text{Specific gas constant} & \cdots & \Rg \\
\text{Heat capacity at constant pressure} & \cdots &  c_p \\
\text{Far-field liquid temperature} & \cdots & T_\infty \\
\text{Far-field liquid pressure} & \cdots & p_{\infty,*} \\
\text{Surface tension} & \cdots & \si \\
\text{Dynamic viscosity of the liquid} & \cdots & \mu_l\\
\text{Density of the liquid} & \cdots & \rho_l\\
\text{Equilibrium gas density} & \cdots & \rho_*\\
\text{Equilibrium gas pressure} & \cdots & p_*\\
\text{Equilibrium bubble radius} & \cdots & R_*
\end{array}
\]
satisfy
\begin{equation}
c_p = \ga c_v = c_v + \Rg,\qquad
p_* = \Rg T_\infty \rho_* = p_{\infty,*} + \frac{2\si}{R_*},\qquad
\chi = \frac{\ka}{c_p\rho_*}.
\label{eq:params}\end{equation}

\item 
To a function $f(t)$, defined for $t\ge0$, we associate the Laplace transform and its inverse:
\begin{equation}\label{inversion}
\tilde{f}(\tau) = \int_0^\infty e^{-t\tau}f(t)dt,\
\text{ where }\quad
 f(t) = \frac{1}{2\pi i}\int_{\Gamma_a} e^{\tau t} \tilde{f}(\tau) d\tau.
 \end{equation}
Here, $\Gamma_a$ denotes the contour, $\Re\tau=a$,  with $a$ chosen so that $\tilde{f}(\tau)$ is analytic for $\Re\tau> a$.

 \item The normalized Dirichlet Laplacian eigenpairs for the unit ball $B_1\subset\mathbb{R}^3$
  are given by:
 \[ -\De\phi_j(x)=\la_j\phi_j(x),\quad \textrm{in $B_1$},\quad  \phi_j|_{|x|=1}=0,\quad 1=\int_{B_1} \phi_j^2(|x|)\, dx = 4\pi \int_0^1 \phi_j^2(y)y^2\, dy.\]
\EQ{\label{eq-eigen-formula}
\phi_j(|x|) = \frac{\sin(j\pi |x|)}{\sqrt{2\pi} |x|},\quad j=1,2,\ldots,\qquad
\la_j = (j\pi)^2,\quad j=1,2,\ldots.
}
\end{enumerate}

\subsection*{Acknowledgements} 
CL and MIW  are supported in part by the Simons Foundation Math + X Investigator Award \#376319 (MIW). MIW is also supported in part by  National Science Foundation Grant DMS-1908657 and DMS-1937254.
The authors thank Bj\"orn Sandstede for very helpful correspondence on the literature.

\section{The initial value problem for the linearized dynamics, and a representation of its solution}\label{linear-ev}

As proved by the authors in \cite{LW-vbas2022}, a small perturbation of an equilibrium spherical bubble, $(\rho_*,R_*)$,  evolves toward the equilibrium bubble determined by initial bubble mass. Since the manifold of equilibria, $\mathcal{M}_*$,  is one-dimensional, we expect the linearized dynamics to be exponentially contracting along a direction through $(\rho_*,R_*)\in \mathcal{M}_*$ which is transverse to  $\mathcal{M}_*$. To derive exponential time-decay of the linearized evolution we now introduce the linearized evolution constrained to this codimension one subspace.

It is convenient to change variables to a setting where the bubble of radius $R(t)$ is mapped 
to a fixed ball of radius one. Thus, we set $r=R(t)y$ and expand  \eqref{red-eqns} around any fixed equilibrium $(\rho_*,R_*)$:
\begin{equation}
\rho(r,t) = \rho_* + \ve \varrho(y,t) + O(\ve^2),\quad
R(t) = R_* + \ve \mathcal R(t) + O(\ve^2).\label{expandRhoR}
\end{equation} 
Retaining only terms which are of $O(\ve)$, we arrive at the linearized equations 
 governing infinitesimally small perturbations $(\varrho(y,t),\mathcal R(t))$:
\begin{subequations}
\label{red-eqns-linear}
\begin{align}
\pd_t\varrho(y,t) &= 
\frac{\chi}{R_*^2} \De_y\varrho(y,t) + \frac{1}{\ga}\,\pd_t \varrho(1,t),\quad 0\le y\le 1,\ t>0, \label{eq-bv-3.10prime-linear}\\
\dot{\mathcal R}(t) &= -\frac{\chi}{R_*\rho_*}\, \pd_y\varrho(1,t) - \frac{R_*}{3\ga\rho_*}\, \pd_t\varrho(1,t),\quad t>0,
\label{eq-bv-3.15prime-linear}\\
\varrho(1,t) &= \frac1{\Rg T_\infty} \bke{ - \frac{2\si}{R_*^2}\, \mathcal R 
+ \frac{4\mu_l}{R_*}\, \dot{\mathcal R}
+ \rho_lR_*\ddot{\mathcal R} },\quad t>0,
 \label{eq-bv-3.16prime-linear}
 \end{align}
\end{subequations}
where we have used the relation $\ga c_v = c_p$ (see \eqref{eq:params}). The parameter
\[ \chi = \frac{\ka}{\rho_*c_p}\]
 is the thermal diffusivity of the gas \cite{Prosperetti-JFM1991}. 
 
A  direct consequence of \eqref{eq-bv-3.10prime-linear} and \eqref{eq-bv-3.15prime-linear} is linearized conservation of mass:
\EQ{\label{eq-Dt-c_of_m-lin}
\frac{d}{dt}\left[ \int_{B_1} \varrho(|x|,t)\, dx +4\pi\, \frac{\rho_*}{R_*}\, \mathcal R(t)\right]=0. 
}
Conversely, \eqref{eq-Dt-c_of_m-lin} and \eqref{eq-bv-3.10prime-linear} imply \eqref{eq-bv-3.15prime-linear}.

Using $M[\rho,R]=M_*=(4\pi/3)\rho_*R_*^3$ and the expansion \eqref{expandRhoR},  we see that linearization of the constant mass constraint reads:
 \EQ{\label{eq-conserve-mass-linear}
 \int_{B_1} \varrho(|x|,t)\, dx +4\pi\, \frac{\rho_*}{R_*}\, \mathcal R(t)=0,\quad t>0. 
}
By\eqref{eq-Dt-c_of_m-lin},  we can ensure that \eqref{eq-conserve-mass-linear} holds for all $t>0$ by constraining the initial data
 for \eqref{red-eqns-linear} to satisfy:
 \EQ{\label{data-constraint}
 \int_{B_1} \varrho_0(|x|)\, dx +4\pi\, \frac{\rho_*}{R_*}\, \mathcal R_0=0. 
}
Hence, the linearized system \eqref{red-eqns-linear} with the initial data constraint \eqref{data-constraint}
 can be expressed as:
%
%
%
\begin{subequations}
\label{red-eqns-linear-mass-preserve}
\begin{align}
\pd_t\varrho(y,t) &= 
\frac{\chi}{R_*^2} \De_y\varrho(y,t) + \frac{1}{\ga}\,\pd_t \varrho(1,t),\quad 0\le y\le 1,\ t>0, \label{eq-bv-3.10prime-linear-mass-preserve}\\
\int_{B_1} \varrho(|x|,t)\, dx &= -4\pi\, \frac{\rho_*}{R_*}\, \mathcal R(t),\quad t>0,
\label{eq-conserve-mass-linear-mass-preserve}\\
\varrho(1,t) &= \frac1{\Rg T_\infty} \bke{ - \frac{2\si}{R_*^2}\, \mathcal R 
+ \frac{4\mu_l}{R_*}\, \dot{\mathcal R}
+ \rho_lR_*\ddot{\mathcal R} },\quad t>0,
 \label{eq-bv-3.16prime-linear-mass-preserve}
 \end{align}
\end{subequations}
with initial data $\varrho(|x|,0)$, $\mathcal R(0)$ and $\dot{\mathcal{R}}(0)$, chosen so that 
\eqref{eq-conserve-mass-linear-mass-preserve} holds at $t=0$.

With a view toward solving this system, we rewrite the diffusion PDE as one with  Dirichlet boundary conditions via the change of dependent variables:
\begin{equation} u(y,t)=\varrho(y,t) - \varrho(1,t),\quad  z(t) = \varrho(1,t),\label{uz-def}
\end{equation}
and note that on the boundary of $B_1$:
\[ u(1,t) = 0,\quad  \pd_yu(1,t) = \pd_y\varrho(1,t). \]
For convenience in the computations, we also introduce a composite parameter,
related to $\kappa$ and $\chi$:
\EQ{\label{eq-bar-ka-def}
\bar\ka \equiv \frac{\chi}{R_*^2} = \frac1{R_*^2} \frac{\ka}{\ga c_v\rho_*}.
}
In terms of the variables $u(y,t)$, $z(t)$ and $\mathcal{R}(t)$, the  linearized system \eqref{red-eqns-linear-mass-preserve} becomes
\begin{subequations}
\label{red-eqns-linear-new}
\begin{align}
\pd_tu &= \bar\ka \De_yu - \bke{ 1 - \frac1{\ga} } \dot z,\quad 0\le y\le1,\qquad u(1,t)=0,\quad \ t>0,\label{eq-u-pde}\\
z(t) &= \frac1{\Rg T_\infty} \bke{ -\frac{2\si}{R_*^2}\mathcal R + \frac{4\mu_l}{R_*}\, \dot{\mathcal R} + \rho_lR_*\ddot{\mathcal R} },\quad t>0,\label{eq-z}\\
\int_{B_1} u(|x|,t)\, dx &= -\frac{4\pi}3 z - 4\pi \frac{\rho_*}{R_*}\, \mathcal R,\quad t>0.\label{eq-u-mass-conserve} 
\end{align}
\end{subequations}
\begin{remark}
Equation \eqref{eq-bv-3.15prime-linear}, a consequence of \eqref{eq-bv-3.10prime-linear-mass-preserve}
 and \eqref{eq-conserve-mass-linear-mass-preserve}, 
may be written as 
\EQ{\label{eq-dotR-u}
\dot{\mathcal R} &= -\frac{\bar\ka R_*}{\rho_*}  \pd_yu(1,t) - \frac{R_*}{3\ga\rho_*}\, \dot z,\quad t>0.
}
\end{remark}

We consider the initial value problem for the system \eqref{red-eqns-linear-new} with initial data:
\begin{equation}\label{uzR-data} u(|x|,0)=u_0(|x|)\in L^2_y(B_1),\ \textrm{and}\ \quad z(0)=z_0,\  \mathcal{R}(0)=\mathcal{R}_0,\  \dot{\mathcal R}(0)=
\dot{\mathcal R}_0 \in\R\end{equation}
subject to the constraint:
\begin{equation}\label{uzR-constr}
\int_{B_1} u_0(|x|)\, dx = -\frac{4\pi}3 z_0 - 4\pi \frac{\rho_*}{R_*}\, \mathcal R_0.\end{equation}

Since $u(y,t)$ in \eqref{eq-u-pde}  is to satisfy homogeneous Dirichlet boundary conditions on $y=|x|=1$, we construct it as an expansion with respect to the eigenbasis for $L^2(B_1)$ of Dirichlet eigenfunctions of $-\Delta$.  The resulting infinite system of ODEs
can then be solved in terms of the Laplace transform; see \eqref{inversion}. The solution of the initial value problem is summarized in the following:

\begin{proposition}[Existence theory for the IVP \eqref{red-eqns-linear-new}, \eqref{uzR-data}, 
\eqref{uzR-constr}] \label{prop-asymp-stab-linear-u}
The IVP \eqref{red-eqns-linear-new}, \eqref{uzR-data}, 
\eqref{uzR-constr} 
has a unique global in time solution $t\mapsto(u(\cdot,t), z(t),\mathcal R(t))\in L^2_y(B_1)\times\R\times\R$
to the system \eqref{red-eqns-linear-new}.

The solution has the following representation:
\EQ{\label{eq-uzR}
u(y,t)= \sum_{j=1}^\infty c_j(t)\phi_j(y),\quad
z(t)= \mathcal L^{-1}\bket{\wt z(\tau)} (t),\quad
\mathcal R(t)= \mathcal L^{-1}\bket{\wt R(\tau)} (t),
}
where
\begin{align}
c_j(t) &= c_j(0) e^{-\bar\ka \la_j t} - \Ga_j \int_0^t e^{-\bar\ka \la_j(t-s)} \dot z(s)\, ds,\quad c_j(0) = \int_{B_1} u(|x|,0)\phi_j(|x|)\, dx, \label{eq-c}\\
\wt z(\tau) &= \frac1{\Rg T_\infty} \bkt{\bke{\rho_lR_*\tau^2 + \frac{4\mu_l}{R_*}\, \tau - \frac{2\si}{R_*^2}} \frac{\textrm{DATA}(\tau)}{Q(\tau)} - \rho_lR_*(\dot{\mathcal R}(0) + \tau\mathcal R(0))},\label{eq-tz}\\
\wt{\mathcal R}(\tau) &= \frac{\textrm{DATA}(\tau)}{Q(\tau)}.\label{eq-tR}
\end{align}
For $j\ge1$,  $\Ga_j$ is given by
\[
 \Ga_j = \frac{\ga-1}\ga \int_{B_1} \phi_j(|x|)\, dx,
\]
and $Q(\tau)$ is given by
\EQ{\label{eq-Q-def}
Q(\tau) = \frac1{\Rg T_\infty} \bke{\frac{4\pi}3 - \frac{8(\ga-1)}{\pi\ga} \sum_{j=1}^\infty \frac{\tau}{j^2\bke{\pi^2\bar\ka j^2 + \tau}} } \bke{\rho_lR_*\tau^2 + \frac{4\mu_l}{R_*}\, \tau - \frac{2\si}{R_*^2}} + 4\pi\,\frac{\rho_*}{R_*}.
}
Finally, $\textrm{DATA}(\tau)$ is determined by the initial conditions and is given by:
\EQ{\label{eq-DATA-def}
\textrm{DATA}(\tau) = - \frac{\ga}{\ga-1} \sum_{j=1}^\infty \Ga_j\, \frac{c_j(0) + \Ga_jz(0)}{\bar\ka\la_j+\tau} + \bke{\frac{4\pi}3 - \frac{\ga}{\ga-1} \sum_{j=1}^\infty \frac{\Ga_j^2 \tau}{\bar\ka\la_j + \tau}} \frac{\rho_lR_*(\dot{\mathcal R}(0) + \tau\mathcal R(0))}{\Rg T_\infty} .
}
\end{proposition}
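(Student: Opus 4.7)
The plan is to construct the solution by expanding $u$ in the Dirichlet eigenbasis of $-\Delta$ on $B_1$, reducing \eqref{red-eqns-linear-new} to an infinite system of coupled ODEs, and then applying the Laplace transform to extract explicit formulas for $z$ and $\mathcal{R}$; existence and uniqueness follow by verifying convergence of the resulting series and invertibility of the $2\times 2$ algebraic system they produce.

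First I would write $u(y,t) = \sum_{j\ge 1} c_j(t)\phi_j(y)$, which is compatible with the homogeneous Dirichlet boundary condition $u(1,t)=0$. Projecting \eqref{eq-u-pde} against $\phi_j$ and using $-\Delta\phi_j = \lambda_j\phi_j$ produces the scalar ODE $\dot c_j + \bar\ka\la_j c_j = -\Ga_j\, \dot z(t)$, where $\Ga_j = \frac{\ga-1}{\ga}\int_{B_1}\phi_j(|x|)\,dx$. Duhamel's formula yields \eqref{eq-c} immediately. Taking Laplace transforms gives $\widetilde{c}_j(\tau) = \bigl(c_j(0)+\Ga_j z(0)-\Ga_j\tau\widetilde z(\tau)\bigr)/(\tau+\bar\ka\la_j)$. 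Next, I Laplace-transform the mass constraint \eqref{eq-u-mass-conserve}, noting that $\int_{B_1} u(|x|,t)\,dx = \frac{\ga}{\ga-1}\sum_j \Ga_j c_j(t)$, and substitute the formula for $\widetilde c_j$; this produces one linear equation in $\widetilde z(\tau),\widetilde{\mathcal R}(\tau)$ whose coefficient of $\widetilde z$ involves $\sum_j \Ga_j^2\tau/(\tau+\bar\ka\la_j)$. Using \eqref{eq-eigen-formula}, direct computation gives $\int_{B_1}\phi_j = 4(-1)^{j+1}/(\sqrt{2\pi}\,j)$, so that $\frac{\ga}{\ga-1}\Ga_j^2 = 8(\ga-1)/(\pi\ga j^2)$ and the series appearing in \eqref{eq-Q-def} emerges precisely. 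A second linear equation comes from Laplace-transforming the second-order ODE \eqref{eq-z}, which expresses $\Rg T_\infty\,\widetilde z$ as $\bigl(\rho_l R_*\tau^2+\tfrac{4\mu_l}{R_*}\tau-\tfrac{2\si}{R_*^2}\bigr)\widetilde{\mathcal R}$ plus terms linear in $\mathcal R(0)$ and $\dot{\mathcal R}(0)$. Solving this $2\times 2$ algebraic system produces $\widetilde{\mathcal R}(\tau) = \textrm{DATA}(\tau)/Q(\tau)$ together with the stated expression \eqref{eq-tz} for $\widetilde z$.

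To complete the existence proof I would verify: (a) the series defining $Q(\tau)$ and $\textrm{DATA}(\tau)$ converge absolutely for $\tau\in\CC\setminus\{-\bar\ka\la_j\}_{j\ge 1}$, since $\Ga_j^2 = O(1/j^2)$ and $(c_j(0))\in\ell^2$ (by Parseval, using $u_0\in L^2_y(B_1)$); (b) $Q$ is meromorphic on $\CC$ and, at large $|\tau|$, $Q(\tau)\sim \frac{4\pi}{3\Rg T_\infty}\rho_l R_*\tau^2$, so $Q$ has only discrete zeros and one can choose a Bromwich contour $\Gamma_a$ on which $\widetilde z, \widetilde{\mathcal R}$ are analytic and decay sufficiently; (c) assembling $u(y,t) = \sum c_j(t)\phi_j(y)$, Parseval gives $\|u(\cdot,t)\|_{L^2_y(B_1)}^2 = \sum c_j(t)^2$, which is finite because $|c_j(t)|\le |c_j(0)| + |\Ga_j|\int_0^t |\dot z(s)|\,ds$ with $(\Ga_j)\in\ell^2$; (d) direct substitution of \eqref{eq-uzR}--\eqref{eq-DATA-def} into \eqref{red-eqns-linear-new} confirms the system is satisfied, with the constraint at $t=0$ enforced by \eqref{uzR-constr}. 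Uniqueness is immediate: the difference of two solutions satisfies the homogeneous problem with zero data, so each of its Laplace transforms vanishes identically, whence the two solutions coincide.

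The main obstacle I anticipate is the careful bookkeeping of initial-data contributions in Step 2: correctly carrying $c_j(0), z(0), \mathcal R(0), \dot{\mathcal R}(0)$ through the Laplace-transformed ODE for $c_j$, the Laplace-transformed constraint, and the Laplace-transformed boundary relation \eqref{eq-z}, and then algebraically inverting the coupled system to obtain $\textrm{DATA}(\tau)$ in the stated form \eqref{eq-DATA-def}. A secondary technical point is justifying the termwise interchange of summation and Laplace transform in forming the series defining $Q$, which requires the absolute convergence estimates noted above together with uniform bounds for $\tau/(\tau+\bar\ka\la_j)$ on $\Gamma_a$. The more delicate analytic questions about the precise location of the zeros of $Q$ (which determine the decay rate) are deferred to Section \ref{sec-exp}.
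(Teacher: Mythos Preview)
Your derivation of the representation formulas \eqref{eq-c}--\eqref{eq-DATA-def} is essentially identical to the paper's: Dirichlet eigenfunction expansion of $u$, projection to obtain the ODEs $\dot c_j+\bar\ka\la_j c_j=-\Ga_j\dot z$, Laplace transform of these together with \eqref{eq-z} and \eqref{eq-u-mass-conserve}, and algebraic solution of the resulting $2\times2$ system for $(\widetilde z,\widetilde{\mathcal R})$. Your explicit computations of $\int_{B_1}\phi_j$ and $\frac{\ga}{\ga-1}\Ga_j^2$ match the paper's.

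Where you diverge is in the existence and uniqueness argument. The paper does not proceed via direct Laplace inversion and Parseval as you propose; instead it first recasts \eqref{red-eqns-linear-new} as an infinite-dimensional dynamical system $\dot{\bf w}=\mathcal L_-{\bf w}$ on $\ell^2$ (Proposition~\ref{prop-dyn-system-form}), then proves $-\mathcal L_-$ is sectorial (Proposition~\ref{prop-spec-sec}, relying on the root-location lemmas for $Q$ in Appendix~\ref{beta-est}), and finally invokes classical analytic semigroup theory for existence and uniqueness. This packaging immediately delivers the exponential bound $\|e^{\mathcal L_-t}\|\le Ce^{-\be t}$, which is reused throughout Sections~\ref{sec-exp}--\ref{sec:forced-bubb}. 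Your direct route is more elementary and self-contained, but it leaves some analytic details to be filled in---most notably that $\widetilde z,\widetilde{\mathcal R}$ decay sufficiently on a Bromwich contour so that $\dot z$ is locally integrable and the Duhamel formula for $c_j$ is justified---and it does not by itself yield the semigroup decay estimate. (Incidentally, the leading asymptotics of $Q$ is $Q(\tau)\sim\frac{4\pi}{3\ga\Rg T_\infty}\rho_lR_*\tau^2$, not $\frac{4\pi}{3\Rg T_\infty}\rho_lR_*\tau^2$; the series contributes a $1/\ga$ correction.)
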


\begin{remark}\label{poles_of_Q} Through the Laplace transform inversion formula, \eqref{inversion},  one sees that the 
location of the poles  of $Q$ determine the rate of exponential decay of solutions. That these poles lie in the open left half plane, is a consequence of energy dissipation; see for example Section \ref{pf-astab-norate}.
In Appendix \ref{beta-est} we perform a detailed analysis yielding bounds on the locations of the zeros of  $Q(\tau)$ in the left half complex $\tau-$ plane. These bounds then give a estimate for the exponentially decay rate for the linearized evolution; see Theorem \ref{thm-exp-decay-LT}. 
\end{remark}

\begin{remark}\label{rmk-nonlineaer-spectrum}
If we do not impose the linearized fixed mass constraint \eqref{uzR-constr}, there are non-decaying (neutral mode) solutions associated with manifold of equilibria.  Indeed, for the IVP \eqref{red-eqns-linear-new}, \eqref{uzR-data} (without the constraint \eqref{uzR-constr}) the proof of \cite[Proposition 9.3]{LW-vbas2022} shows that $\widetilde{\mathcal R}(\tau) = \textrm{DATA}(\tau)/(\tau Q(\tau))$ where $Q(\tau)$ is as in \eqref{eq-Q-def} and that $\wt z(\tau), c_j(t)$ are as in Proposition \ref{prop-asymp-stab-linear-u}.
In Proposition \ref{prop-asymp-stab-linear-u}, the linearized constant mass condition \eqref{eq-conserve-mass-linear}, which follows from the initial data constraint  \eqref{uzR-constr}, projects out the pole of the linearized operator resolvent at $\tau=0$; see \cite[9.30]{LW-vbas2022}.
The nonlinear picture is then consistent with exponential contraction onto a center manifold parametrized by bubble mass. See \cite[Section 9]{LW-vbas2022}.
\end{remark}

To prove Proposition \ref{prop-asymp-stab-linear-u}, in particular the derivation of the solution representation, 
we begin by reexpressing the system \eqref{red-eqns-linear-new} as an infinite-dimensional dynamical system.

\begin{proposition}[A dynamical system formulation]\label{prop-dyn-system-form}
Let $c_j = c_j(t)$ denote the $j$-th coefficients in the radial-Dirichlet-eigenfunction decomposition of $u$ as in \eqref{eq-uzR}.
Then, the system \eqref{red-eqns-linear-new} is equivalent to an infinite-dimensional dynamical system for ${\bf w} = (\mathcal R,\dot{\mathcal R}, c_1, c_2,\cdots)^\top$
\EQ{\label{eq-dyn-sys-unforced}
\dot{\bf w} = \mathcal L_-{\bf w},
}
The linear operator $\mathcal L_-$ is displayed below in \eqref{eq-def-L-}.
\end{proposition}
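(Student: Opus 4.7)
The plan is to recast \eqref{red-eqns-linear-new} as a first-order evolution on ${\bf w} = (\mathcal R, \dot{\mathcal R}, c_1, c_2, \ldots)^\top$ by expressing $z(t)$, $\dot z(t)$, and $\ddot{\mathcal R}(t)$ as explicit linear functionals of ${\bf w}$, and then reading off $\mathcal L_-$ component-by-component from the resulting scalar equations.

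First, I would substitute $u(y,t) = \sum_{j\ge 1} c_j(t)\phi_j(y)$ into the diffusion equation \eqref{eq-u-pde} and take the $L^2(B_1)$ inner product with $\phi_k$. Since $-\De_y\phi_j = \la_j\phi_j$ and $\{\phi_j\}$ is orthonormal, and the projection of the spatially constant term $(1-1/\ga)\dot z$ onto $\phi_j$ contributes the factor $\Ga_j = \tfrac{\ga-1}{\ga}\int_{B_1}\phi_j(|x|)\,dx$, one obtains the scalar ODEs $\dot c_j = -\bar\ka\la_j c_j - \Ga_j\,\dot z$ for $j\ge 1$, coupled only through the single scalar $\dot z(t)$. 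Next, I would resolve the mass constraint \eqref{eq-u-mass-conserve} for $z$: writing $\wt\Ga_j := \int_{B_1}\phi_j(|x|)\,dx$,
\EQN{
z(t) = -\frac{3}{4\pi}\sum_{j\ge 1}\wt\Ga_j c_j(t) - \frac{3\rho_*}{R_*}\mathcal R(t),
}
so that $z$ is an explicit linear functional of ${\bf w}$; inserting this into \eqref{eq-z} and solving algebraically for $\ddot{\mathcal R}$ expresses $\ddot{\mathcal R}$ linearly in ${\bf w}$ as well.

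The main step is to express $\dot z$ itself linearly in ${\bf w}$. Differentiating the above formula for $z$ in time and substituting $\dot c_j = -\bar\ka\la_j c_j - \Ga_j\dot z$ produces the self-referential identity
\EQN{
\dot z\bke{1 - \frac{3}{4\pi}\sum_{j\ge 1}\wt\Ga_j\Ga_j} = \frac{3\bar\ka}{4\pi}\sum_{j\ge 1}\wt\Ga_j\la_j c_j - \frac{3\rho_*}{R_*}\dot{\mathcal R}.
}
The Parseval identity applied to the constant function $\mathbf 1\in L^2(B_1)$, whose expansion coefficients in the orthonormal basis $\{\phi_j\}$ are exactly $\wt\Ga_j$, gives $\sum_j\wt\Ga_j^2 = \|\mathbf 1\|_{L^2(B_1)}^2 = 4\pi/3$, so $\sum_j\wt\Ga_j\Ga_j = \tfrac{\ga-1}{\ga}\cdot\tfrac{4\pi}{3}$ and the prefactor on the left collapses to $1/\ga > 0$. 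Solving for $\dot z$ then yields an explicit linear functional of $\dot{\mathcal R}$ and $\{c_j\}$.

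With $z$, $\dot z$, and $\ddot{\mathcal R}$ each realized as linear functionals of ${\bf w}$, inserting them back into the $\dot c_j$ equations and the second-order equation for $\mathcal R$ produces a first-order linear system $\dot{\bf w} = \mathcal L_-{\bf w}$ whose rows match \eqref{red-eqns-linear-new} row-by-row; the converse implication is obtained by reversing these identifications. The principal obstacle is the third step: one must verify that the prefactor $1 - \tfrac{3}{4\pi}\sum_j\wt\Ga_j\Ga_j$ does not vanish — handled cleanly by the Parseval computation, which produces the clean value $1/\ga$ — and that the series $\sum_j\wt\Ga_j\la_j c_j$ converges, a requirement that will dictate the natural domain on which $\mathcal L_-$ is to be defined in the subsequent spectral and resolvent analysis.
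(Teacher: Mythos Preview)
Your proposal is correct and follows essentially the same route as the paper: expand in Dirichlet eigenfunctions to obtain $\dot c_j=-\bar\ka\la_j c_j-\Ga_j\dot z$, use the mass constraint \eqref{eq-u-mass-conserve} to express $z$ linearly in $(\mathcal R,\{c_j\})$, and then eliminate the implicit dependence on $\dot z$ via the identity $\sum_j\wt\Ga_j\Ga_j=\tfrac{\ga-1}{\ga}\cdot\tfrac{4\pi}{3}$. The only cosmetic difference is that the paper packages the elimination step as inverting an infinite matrix of the form $I+\text{rank-one}$ (the Sherman--Morrison computation in \eqref{eq-inverse-matrix}, using $\sum_j\Ga_j^2=4\pi(\ga-1)^2/(3\ga^2)$), whereas you solve the single scalar equation for $\dot z$ directly; your Parseval argument and the paper's sum identity are the same fact.
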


\begin{proof}
Using the Dirichlet eigenfunction expansion \eqref{eq-uzR} in \eqref{eq-u-pde} and \eqref{eq-u-mass-conserve} yields
\EQ{\label{eq-dotc-0}
\dot c_k = -\bar\ka\la_kc_k - \Ga_k\dot z,
}
\EQ{\label{eq-z-reduction}
z = -\frac3{4\pi} \frac{\ga}{\ga-1} \sum_{j=1}^\infty \Ga_jc_j - \frac{3\rho_*}{R_*} \mathcal R.
}
Using \eqref{eq-z-reduction} in \eqref{eq-dotc-0} and \eqref{eq-z}, we deduce
\EQ{\label{eq-dotc-1}
\dot c_k = -\bar\ka\la_kc_k - \Ga_k \bke{ -\frac{3\ga}{4\pi(\ga-1)} \sum_{j=1}^\infty \Ga_j \dot c_j - \frac{3\rho_*}{R_*} \dot{\mathcal R} },
}
\EQ{\label{eq-z-unforced-2}
\ddot{\mathcal R} &= \frac{\Rg T_\infty}{\rho_lR_*} \bke{  -\frac{3\ga}{4\pi(\ga-1)} \sum_{j=1}^\infty \Ga_j c_j - \frac{3\rho_*}{R_*} \mathcal R } + \frac{2\si}{\rho_lR_*^3}\mathcal R - \frac{4\mu_l}{\rho_lR_*^2}\dot{\mathcal R}\\
&= - b \mathcal R - \frac{4\mu_l}{\rho_lR_*^2} \dot{\mathcal R} - d \sum_{j=1}^\infty \Ga_jc_j,
}
where 
\EQ{\label{eq-def-b-d}
b := \frac{3p_{\infty,*}}{\rho_lR_*^2} + \frac{4\si}{\rho_lR_*^3} > 0,\quad \text{ and }\quad
d := \frac{3\ga\Rg T_\infty}{4\pi(\ga -1)\rho_lR_*} >0.
}
Above we used the relation $\Rg T_\infty \rho_* = p_{\infty,*} + 2\si/R_*$.

Thus, \eqref{eq-z-unforced-2} and \eqref{eq-dotc-1} form the infinite-dimensional dynamical system for ${\bf w} = (\mathcal R,\dot{\mathcal R}, c_1,c_2,\cdots)^\top$:
\EQN{
&\begin{pmatrix}
1&0&0&0&\cdots\\
0&1&0&0&\cdots\\
0&-\frac{3\rho_*}{R_*} \Ga_1&1-\frac{3\ga}{4\pi(\ga-1)}\Ga_1^2&-\frac{3\ga}{4\pi(\ga-1)}\Ga_1\Ga_2&\cdots\\
0&-\frac{3\rho_*}{R_*} \Ga_2&-\frac{3\ga}{4\pi(\ga-1)}\Ga_1\Ga_2&1-\frac{3\ga}{4\pi(\ga-1)}\Ga_2^2&\cdots\\
\vdots&\vdots&\vdots&\vdots&\ddots
\end{pmatrix}
\begin{bmatrix}
\mathcal R\\
\dot{\mathcal R}\\
c_1\\
c_2\\
\vdots
\end{bmatrix}^\prime\\
&\qquad\qquad\qquad\qquad\qquad\qquad\qquad=
\begin{pmatrix}
0&1&0&0&\cdots\\
-b & -\frac{4\mu_l}{\rho_lR_*^2}&-d\Ga_1&-d\Ga_2&\cdots\\
0&0&-\bar\ka\la_1&0&\cdots\\
0&0&0&-\bar\ka\la_2&\cdots\\
\vdots&\vdots&\vdots&\vdots&\ddots
\end{pmatrix}
\begin{bmatrix}
\mathcal R\\
\dot{\mathcal R}\\
c_1\\
c_2\\
\vdots
\end{bmatrix}.
}
Using $\sum_{j=1}^\infty\Ga_j^2 = 4(\ga-1)^2\pi/(3\ga^2)$, the inverse of the matrix on the left hand side above is 
\EQ{\label{eq-inverse-matrix}
&\begin{pmatrix}
1&0&0&0&\cdots\\
0&1&0&0&\cdots\\
0&-\frac{3\rho_*}{R_*} \Ga_1&1-\frac{3\ga}{4\pi(\ga-1)}\Ga_1^2&-\frac{3\ga}{4\pi(\ga-1)}\Ga_1\Ga_2&\cdots\\
0&-\frac{3\rho_*}{R_*} \Ga_2&-\frac{3\ga}{4\pi(\ga-1)}\Ga_1\Ga_2&1-\frac{3\ga}{4\pi(\ga-1)}\Ga_2^2&\cdots\\
\vdots&\vdots&\vdots&\vdots&\ddots
\end{pmatrix}^{-1}\\
&\qquad\qquad\qquad\qquad\qquad=
\begin{pmatrix}
1&0&0&0&\cdots\\
0&1&0&0&\cdots\\
0&\frac{3\rho_*}{R_*}\ga\Ga_1&1+\frac{3\ga^2}{4\pi(\ga-1)}\Ga_1^2&\frac{3\ga^2}{4\pi(\ga-1)}\Ga_1\Ga_2&\cdots\\
0&\frac{3\rho_*}{R_*}\ga\Ga_2&\frac{3\ga^2}{4\pi(\ga-1)}\Ga_1\Ga_2&1+\frac{3\ga^2}{4\pi(\ga-1)}\Ga_2^2&\cdots\\
\vdots&\vdots&\vdots&\vdots&\ddots
\end{pmatrix}.
}
Left-multiplying the inverse on both sides, we obtain
\EQ{\label{eq-matrix-form-unforced}
\begin{bmatrix}
\mathcal R\\
\dot{\mathcal R}\\
c_1\\
c_2\\
\vdots
\end{bmatrix}^\prime
&=
\begin{pmatrix}
0&1&0&0&\cdots\\
-b &-\frac{4\mu_l}{\rho_lR_*^2}&-d\Ga_1&-d\Ga_2&\cdots\\
-b \frac{3\rho_*}{R_*}\ga\Ga_1 & -\frac{12\mu_l\rho_*}{\rho_lR_*^3}\ga\Ga_1 &-\bar\ka\la_1 - e_1\Ga_1^2& -e_2\Ga_1\Ga_2&\cdots\\
-b \frac{3\rho_*}{R_*}\ga\Ga_2 & -\frac{12\mu_l\rho_*}{\rho_lR_*^3}\ga\Ga_2 &- e_1\Ga_1\Ga_2&-\bar\ka\la_2 -e_2\Ga_2^2&\cdots\\
\vdots&\vdots&\vdots&\vdots&\ddots
\end{pmatrix}
\begin{bmatrix}
\mathcal R\\
\dot{\mathcal R}\\
c_1\\
c_2\\
\vdots
\end{bmatrix},
}
where
\EQ{\label{eq-def-ej}
e_j := \bar\ka \la_j \frac{3\ga^2}{4\pi(\ga-1)} + \frac{3\rho_*}{R_*} \ga d.
}
Therefore, \eqref{eq-matrix-form-unforced} can be written in the form
\[
\dot{\bf w} = \mathcal L_- {\bf w},
\]
where
\EQ{\label{eq-def-L-}
\mathcal L_- = 
\begin{pmatrix}
0&1&0&0&\cdots\\
-b &-\frac{4\mu_l}{\rho_lR_*^2}&-d\Ga_1&-d\Ga_2&\cdots\\
-b \frac{3\rho_*}{R_*}\ga\Ga_1 & -\frac{12\mu_l\rho_*}{\rho_lR_*^3}\ga\Ga_1 &-\bar\ka\la_1 - e_1\Ga_1^2& -e_2\Ga_1\Ga_2&\cdots\\
-b \frac{3\rho_*}{R_*}\ga\Ga_2 & -\frac{12\mu_l\rho_*}{\rho_lR_*^3}\ga\Ga_2 &- e_1\Ga_1\Ga_2&-\bar\ka\la_2 - e_2\Ga_2^2&\cdots\\
\vdots&\vdots&\vdots&\vdots&\ddots
\end{pmatrix}.
}
in which $b,d$ are given in \eqref{eq-def-b-d}, and $e_j$ is defined in \eqref{eq-def-ej}.
This completes the proof of the proposition.
\end{proof}

We now study the linearized time-dynamics $\dot{\bf w} = \mathcal L_- {\bf w}$,  \eqref{eq-dyn-sys-unforced}.
Our main tool is  the Laplace transform.
Taking Laplace transform of the system \eqref{eq-dyn-sys-unforced},
one derives $(\mathcal L_- - \tau I)\wt{\bf w}(\tau) = -{\bf w}(0)$, where $I$ is the identity operator.
Consider the operator $\mathcal L_-$, acting in $\ell^2$,  with domain 
\[ \mathcal D(\mathcal L_-) = \{{\bf w}\in\ell^2:\mathcal L_-{\bf w}\in\ell^2\}.\]
The following result identifies  the spectrum of $\mathcal L_-$, 
\[
\si(\mathcal L_-) = \bket{\tau\in\CC: \mathcal L_- - \tau I\ \text{ is  not invertible as a  bounded operator from $\ell^2$ to $\mathcal D(\mathcal L_-)$}},
\]
 with the poles of $\wt{\bf w}(\tau)$.

\begin{proposition}\label{prop-spec-sec}
Let $\mathcal L_-$ be the linear operator defined in \eqref{eq-def-L-}.
Then

(1)
\[
\si(\mathcal L_-) = \{\tau\in\CC: Q(\tau) = 0\},
\]
where $Q(\tau)$ is given in \eqref{eq-Q-def}.

(2) There exists $\be>0$ such that any $\tau\in\mathbb{C}$ and $Q(\tau) = 0$ implies 
 $\Re(\tau)\le -\be< 0$, where a lower bound for $\be$ is displayed in  \eqref{eq-be-sharp-def-appendix}
  of Appendix \ref{beta-est}.

(3) Moreover, there exists a constant $C=C(\be)$ such that $\norm{(\mathcal L_- - \tau I)^{-1}} \le C(\be)$ for all $\tau\in\CC$ with $\Re(\tau)\le-\be$.

(4) The operator $-\mathcal L_-$ is sectorial.
In particular, $\mathcal L_-$ generates an analytic semigroup $\bket{e^{\mathcal L_- t} }_{t\ge0}$ with
\EQ{\label{eq-semigroup-decay-est}
\norm{e^{\mathcal L_- t}} \le C e^{-\be t},\quad t\ge0,
}
for some $C>0$.
\end{proposition}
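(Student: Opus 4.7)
The plan is to leverage the explicit Laplace-transform representation from Proposition \ref{prop-asymp-stab-linear-u}, which effectively solves the resolvent equation $(\mathcal{L}_- - \tau I)\widetilde{\mathbf{w}}(\tau) = -\mathbf{w}(0)$ in closed form, with $Q(\tau)$ emerging as the key denominator.

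For part (1), I would reverse-engineer the derivation of \eqref{eq-tz}--\eqref{eq-DATA-def}: take the Laplace transform of $\dot{\mathbf{w}} = \mathcal{L}_-\mathbf{w}$ with inhomogeneity $-\mathbf{w}(0)$, and first solve the rows indexed by $c_k$ to express each $\widetilde{c}_k$ as a rational function of $\tau$, $\widetilde{z}$ and initial data with denominator $\bar\kappa \lambda_k + \tau$ (which is nonzero for $\Re\tau \geq 0$, and vanishes only on the left half line). Substituting back into the two rows for $(\mathcal{R}, \dot{\mathcal{R}})$ together with the algebraic relation \eqref{eq-z-reduction} reduces the infinite system to a $2\times 2$ linear system for $(\widetilde{\mathcal{R}}, \widetilde{z})$ whose determinant equals $Q(\tau)$ up to a non-vanishing factor. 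Consequently, $(\mathcal{L}_- - \tau I)^{-1}$ exists as a bounded operator on $\ell^2$ precisely when $Q(\tau) \neq 0$.

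Part (2) is the main technical content, and its quantitative estimate is deferred to Appendix \ref{beta-est}. The first step is to exclude zeros of $Q$ in $\{\Re\tau \geq 0\}$: any such zero would produce a non-decaying mode solution of the mass-constrained linearized IVP, contradicting the strict dissipation to be established in Section \ref{pf-astab-norate}. Obtaining a uniform $\beta > 0$ then requires two complementary estimates. In a bounded disk around the origin, analyticity of $Q(\tau)$ and compactness prevent zeros from accumulating on the imaginary axis. In the exterior $\{|\tau| \gg 1\}$, the infinite sum in \eqref{eq-Q-def} must be analyzed via partial-fraction/telescoping identities so as to extract the dominant quadratic-in-$\tau$ contribution from the radius dynamics and control the thermal-sum remainder; this is the delicate step, as the interplay between the two mechanisms determines where the rightmost zero sits.

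For parts (3)--(4), once $|Q(\tau)| \geq c > 0$ uniformly on $\{\Re\tau \leq -\beta\}$ (with polynomial growth at infinity inside a sector about the negative real axis), the resolvent bound follows componentwise from the formulas of Proposition \ref{prop-asymp-stab-linear-u}: the multipliers $\Gamma_k\tau/(\bar\kappa\lambda_k + \tau)$ appearing in \eqref{eq-c} are uniformly bounded in $(k,\tau)$, so the $\ell^2$ norm of $(\widetilde{c}_k)$ is controlled by the initial data together with $|\widetilde{z}| \lesssim 1/|Q(\tau)|$. Upgrading these half-plane bounds to a sector $\{|\arg(\tau + \beta)| < \pi - \theta_0\}$ with $\theta_0 < \pi/2$ yields sectoriality of $-\mathcal{L}_-$, and the analytic semigroup estimate \eqref{eq-semigroup-decay-est} then follows from the Dunford contour representation
\[
e^{\mathcal{L}_- t} = \frac{1}{2\pi i}\int_{\Gamma} e^{\tau t}(\tau I - \mathcal{L}_-)^{-1}\, d\tau
\]
with $\Gamma$ deformed to the boundary of this sector. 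The hardest step is clearly part (2): extracting an explicit, admissible $\beta > 0$ from the transcendental equation $Q(\tau) = 0$ in which the infinite thermal-diffusion sum and the quadratic viscous/inertial terms of the radius equation couple non-trivially.
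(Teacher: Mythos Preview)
Your proposal is correct and follows essentially the same route as the paper: derive the Laplace-transform representation of the resolvent componentwise, identify $Q(\tau)$ as the scalar obstruction to invertibility, and then defer the hard spectral estimates to the appendix lemmas (Lemma~\ref{lem-negative-upper-bound} for the strip bound on zeros, Lemma~\ref{lem-sectorial-bound} for sectoriality), concluding via the standard sectorial--semigroup correspondence.

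The one methodological difference worth flagging is in your sketch of Part~(2). You propose a soft two-step route: invoke the energy dissipation of Section~\ref{pf-astab-norate} to exclude zeros in $\{\Re\tau\ge0\}$, then use analyticity and compactness near the imaginary axis plus large-$|\tau|$ asymptotics to secure a uniform gap $\beta>0$. The paper's Appendix~\ref{beta-est} instead argues entirely algebraically: it separates $Q(\xi+i\eta)=0$ into real and imaginary parts and manipulates the resulting identities to force $\xi$ below an \emph{explicit} negative threshold, case-by-case in $\eta$. Your approach is conceptually cleaner and would suffice for mere existence of $\beta>0$, but it cannot by itself produce the quantitative formula~\eqref{eq-be-sharp-def-appendix}; the paper's direct computation is what buys the explicit dependence on $\bar\kappa$, $\mu_l$, $\gamma$, etc. (The paper does acknowledge your energy-based shortcut as an alternative for the real-root case $\eta=0$, but then gives the self-contained argument anyway.)
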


\begin{proof}
Expand $u$ in terms of the radial Dirichlet eigenfunctions as
\EQ{\label{eq-u-expansion}
u(y,t) = \sum_{j=1}^\infty c_j(t) \phi_j(y),
}
where $\phi_j$, $j=1,2,\ldots$, are defined in \eqref{eq-eigen-formula}.
Plugging \eqref{eq-u-expansion} into \eqref{eq-u-pde},
\EQ{\label{eq-u-fourier}
\sum_{j=1}^\infty \dot c_j(t) \phi_j(y) = -\bar\ka \sum_{j=1}^\infty \la_jc_j(t)\phi_j(y) - \frac{\ga-1}\ga \dot z(t).
}
Taking inner product of \eqref{eq-u-fourier} in $L^2(B_1)$ with $\phi_k(y)$, $k=1,2,\ldots$, one has
\EQ{\label{eq-dotc}
\dot c_k(t) = -\bar\ka\la_kc_k(t) - \Ga_k\dot z(t),
}
where
\EQ{\label{eq-Ga-formula}
\Ga_k = \frac{\ga-1}\ga \int_{B_1} \phi_k(|x|)\, dx
= \frac{2\sqrt2(\ga-1)}{\sqrt\pi \ga}\, \frac{(-1)^{k-1}}k.
}
This implies \eqref{eq-c} of Proposition \ref{prop-asymp-stab-linear-u}.

Taking Laplace transform of \eqref{eq-dotc}
, we have
\[
- c_k(0) + \tau\widetilde{c_k}(\tau) 
= - \bar\ka\la_k\widetilde{c_k}(\tau) - \Ga_k(-z(0) + \tau\widetilde z(\tau)),
\]
or
\EQ{\label{eq-tdc}
\widetilde{c_k}(\tau) = \frac{c_k(0) + \Ga_kz(0)}{\bar\ka\la_k+\tau} - \frac{\Ga_k\tau}{\bar\ka\la_k+\tau}\, \widetilde z(\tau).
}

Taking Laplace transform of \eqref{eq-u-mass-conserve}
, we obtain
\[
\sum_{j=1}^\infty \widetilde{c_j}(\tau) \int_{B_1}\phi_j\, dx = - \frac{4\pi}3\, \widetilde z(\tau) -4\pi\, \frac{\rho_*}{R_*}\, \widetilde{\mathcal R}(\tau),
\]
or, by using \eqref{eq-tdc} and \eqref{eq-Ga-formula},
\[
\sum_{j=1}^\infty \bke{\frac{c_j(0) + \Ga_jz(0)}{\bar\ka\la_j+\tau} -\frac{\Ga_j\tau}{\bar\ka\la_j + \tau}\, \widetilde z(\tau) }\cdot \frac{\ga}{\ga-1}\, \Ga_j  
= - \frac{4\pi}3\, \widetilde z(\tau) - 4\pi\, \frac{\rho_*}{R_*}\, \widetilde{\mathcal R}(\tau).
\]
Rearranging the above, we deduce
\EQ{\label{eq-tdR}
\bke{\frac{4\pi}3 - \frac{\ga}{\ga-1} \sum_{j=1}^\infty \frac{\Ga_j^2 \tau}{\bar\ka\la_j + \tau}} \widetilde z(\tau) + 4\pi\,\frac{\rho_*}{R_*}\, \widetilde{\mathcal R}(\tau) = - \sum_{j=1}^\infty \frac{c_j(0) + \Ga_jz(0)}{\bar\ka\la_j+\tau}\cdot \frac{\ga}{\ga-1}\, \Ga_j.
}

Taking Laplace transform of \eqref{eq-z}
, we derive
\[
\Rg T_\infty \widetilde z(\tau) = -\frac{2\si}{R_*^2} \widetilde R(\tau) + \frac{4\mu_l}{R_*} \bke{-\mathcal R(0) + \tau \widetilde R(\tau) } + \rho_lR_*\bke{-\dot{\mathcal R}(0) - \tau\mathcal R(0) + \tau^2\widetilde R(\tau)},
\]
or
\EQ{\label{eq-tdz}
\bke{\rho_lR_*\tau^2 + \frac{4\mu_l}{R_*}\, \tau - \frac{2\si}{R_*^2}} \widetilde{\mathcal R}(\tau) - \Rg T_\infty\widetilde z(\tau) 
= \rho_lR_* \bke{\dot{\mathcal R}(0) + \tau\mathcal{R}(0)},
}
which implies \eqref{eq-tz} of  Proposition \ref{prop-asymp-stab-linear-u}.

Replacing $\widetilde z(\tau)$ in \eqref{eq-tdR} using \eqref{eq-tdz},
\[
\bkt{ \frac1{\Rg T_\infty} \bke{\frac{4\pi}3 - \frac{\ga}{\ga-1} \sum_{j=1}^\infty \frac{\Ga_j^2 \tau}{\bar\ka\la_j + \tau}} \bke{\rho_lR_*\tau^2 + \frac{4\mu_l}{R_*}\, \tau - \frac{2\si}{R_*^2}} + 4\pi\,\frac{\rho_*}{R_*} } \widetilde R(\tau) = \textrm{DATA}(\tau),
\]
where $\textrm{DATA}(\tau)$ is given in \eqref{eq-DATA-def} whose poles located at $-\bar\ka\la_j = -\pi^2\bar\ka j^2$, $j=1,2,\ldots$.
Using \eqref{eq-Ga-formula} and \eqref{eq-eigen-formula} we have
\[
\frac{\Ga_j^2\tau}{\bar\ka\la_j + \tau} = \frac{8(\ga - 1)^2}{\pi\ga^2}\, \frac{\tau}{j^2(\pi^2\bar\ka j^2 + \tau)},
\]
and thus,
\[
Q(\tau) \widetilde R(\tau) = \textrm{DATA}(\tau),
\]
where $Q(\tau)$ is defined in \eqref{eq-Q-def}. This proves \eqref{eq-tR} of Proposition \ref{prop-asymp-stab-linear-u}.

Using \eqref{eq-tdz} in \eqref{eq-tdc} to replace $\widetilde z(\tau)$, $\widetilde{c_k}(\tau)$ can be written in terms of $\widetilde R(\tau)$, hence $Q(\tau)$, and the initial data.
The expressions of $\widetilde R(\tau)$ and $\widetilde{c_k}(\tau)$ in terms of the initial data and $Q(\tau)$ amounts to, taking Laplace transform of \eqref{eq-dyn-sys-unforced}, $\widetilde{\bf w}(\tau) = (\mathcal L_- - \tau I)^{-1}\widetilde{\bf w}(0)$ for all $\tau\in\CC$ with $Q(\tau)\neq0$.
The proposition then follows immediately from Lemmas \ref{lem-negative-upper-bound} and \ref{lem-sectorial-bound} and the relation between sectorial operators and analytic semigroups (see e.g. \cite[Theorem 1.3.4, p.20]{Henry-book1981}).
\end{proof}

We now prove the existence of global in time solution in $L^2$ setting and complete the proof of Proposition \ref{prop-asymp-stab-linear-u}.

\begin{proof}[Proof of Proposition \ref{prop-asymp-stab-linear-u}]
The solution formula \eqref{eq-uzR}-\eqref{eq-tR} is established in the proof of Proposition \ref{prop-spec-sec}.
It suffices to show the global in time existence of the unique solution $(u,z,\mathcal R)$ of \eqref{red-eqns-linear-new} in $L^2$ setting.
By Proposition \ref{prop-dyn-system-form}, it is equivalent to the existence of unique global in time solution ${\bf w}(t)$ of $\dot{\bf w} = \mathcal L_-{\bf w}$ in $\ell^2$. 
Since $\{e^{\mathcal L_- t}\}_{t\ge0}$ is an analytic semigroup defined on $\ell^2$ by Part (3) of Proposition \ref{prop-spec-sec},
the desired existence and unique of the solution ${\bf w}\in\ell^2$ then follows from the classical semigroup theory (see eg. \cite[Corollary 1.5, p. 104]{Pazy-book1983}).
\end{proof}

In the following section, we present a ``weak form'' of time-decay of solutions
of the  IVP \eqref{red-eqns-linear-new}, \eqref{uzR-data}, 
\eqref{uzR-constr}; decay is proved with no rate deduced. This proof makes use
of a linearized energy dissipation law and not on a detailed analysis of $Q(\tau)$.

\section{Linear asymptotic stability of spherical equilibrium bubbles by an energy method}\label{pf-astab-norate}

In this section we show that the zero solution of the linearized system \eqref{red-eqns-linear-mass-preserve},
subject to the linearized constant mass (data) constraint \eqref{data-constraint},  is asymptotically stable by an analogous energy approach to that carried out in \cite{LW-vbas2022} for the nonlinear problem \eqref{red-eqns}. 
Introduce a norm for measuring the size of the perturbation in the $C^{2+2\al}$ setting: 
For $\al\in(0,1/2)$,
\EQ{\label{eq-def-oldnorm-holder}
\oldnorm{\left(\varrho(\cdot,t),\mathcal R(t),\dot{\mathcal R}(t)\right)}_{C^{2+2\al}} 
  \equiv \| \varrho(\cdot,t) \|_{C^{2+2\al}_y(B_1)} + |\mathcal R(t)| + \abs{\dot{\mathcal R}(t)}.
}
Here, $\norm{f(\cdot,t)}_{C^{2+2\al}_y(B_1)}$ is given by \eqref{eq-holder-norm-def}.

\begin{theorem}\label{thm-asymp-stab-linear}
Consider the initial value problem for \eqref{red-eqns-linear-mass-preserve} with $C^{2+2\al}_y$ initial data $\varrho(\cdot,0)$, $\al\in(0,1/2)$, satisfying the linearized constant mass (data) constraint \eqref{data-constraint}.
Then

(1) there exists a unique global in time solution $(\varrho,\mathcal R)\in C^{2+2\al,1+\al}_{y,t}(B_1\times[0,\infty))\times C^{3+\al}_t[0,\infty)$, and

(2)
 \[
\oldnorm{\left(\varrho(\cdot,t),\mathcal R(t),\dot{\mathcal R}(t)\right)}_{C^{2+2\al}}  \to0\ \text{ as }t\to+\infty.
 \]

Furthermore, we have $\ddot{\mathcal R}(t)$, $\dddot{\mathcal R}(t)\to0$ as $t\to\infty$.
\end{theorem}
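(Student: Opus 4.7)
The plan is to adapt the energy-dissipation framework from \cite{LW-vbas2022, BV-SIMA2000}, which there was applied to the nonlinear problem \eqref{red-eqns}, to the linearized system \eqref{red-eqns-linear-mass-preserve} on the codimension-one subspace cut out by the linearized mass constraint \eqref{data-constraint}. Concretely, I would first construct a linearized energy $E_L[\varrho,\mathcal R,\dot{\mathcal R}](t)$ as the quadratic part of the total energy functional of \cite{LW-vbas2022} (kinetic energy of the liquid, gas internal energy, surface energy, and work against $p_{\infty,*}$) expanded about $(\rho_*,R_*,0)$. The local convexity of the nonlinear energy at equilibria, established in \cite{LW-vbas2022}, together with the fact that \eqref{data-constraint} projects out the single zero-eigenvalue direction of the Hessian tangent to $\mathcal M_*$, yields positive definiteness of $E_L$ on the constrained space. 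Differentiating $E_L$ along solutions and integrating by parts using \eqref{eq-bv-3.10prime-linear-mass-preserve}--\eqref{eq-bv-3.16prime-linear-mass-preserve} should produce a dissipation identity $\tfrac{d}{dt} E_L = - D_L \le 0$, where $D_L$ controls $\int_{B_1}|\nabla_y\varrho|^2\,dx$ (thermal conduction) and $|\dot{\mathcal R}|^2$ (viscous loss at the interface).

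Second, I would establish global existence and uniqueness of the solution in the prescribed H\"older class. The system \eqref{red-eqns-linear-mass-preserve} is a linear parabolic equation for $\varrho$ on a fixed ball coupled to a second-order ODE boundary condition for $\mathcal R$ through \eqref{eq-bv-3.16prime-linear-mass-preserve}, which ties $\varrho(1,t)$ to $\ddot{\mathcal R}$. Local-in-time existence in $C^{2+2\alpha,1+\alpha}_{y,t} \times C^{3+\alpha}_t$ follows from linear Schauder theory for parabolic equations with dynamic boundary conditions applied to the reduced ODE-PDE coupled system; global existence then comes from the a priori coercive bound provided by $E_L$, combined with instantaneous smoothing estimates that upgrade the $L^2$-type control from $E_L$ to the required H\"older norm on time strips $[t,t+1]$.

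Third, for decay I would apply a LaSalle-type argument. Time integration of $\tfrac{d}{dt}E_L = -D_L$ gives $\int_0^\infty D_L\,dt < \infty$, so along any sequence $t_n \to \infty$ one can extract (using uniform-in-time $C^{2+2\alpha,1+\alpha}$ bounds and Arzel\`a-Ascoli) a limit $(\varrho_\infty,\mathcal R_\infty,\dot{\mathcal R}_\infty)$ for which $D_L = 0$, forcing $\nabla_y\varrho_\infty\equiv 0$ and $\dot{\mathcal R}_\infty = 0$. The linearized mass constraint \eqref{eq-conserve-mass-linear-mass-preserve} and the boundary condition \eqref{eq-bv-3.16prime-linear-mass-preserve}, in the limit, pin down $\varrho_\infty \equiv 0$ and $\mathcal R_\infty = 0$. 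Uniqueness of the $\omega$-limit promotes the subsequential convergence to full convergence. Convergence in the $C^{2+2\alpha}$ norm \eqref{eq-def-oldnorm-holder} is then obtained by interpolation between the energy decay and uniform higher-order H\"older bounds, and the convergence $\ddot{\mathcal R}(t),\,\dddot{\mathcal R}(t) \to 0$ is read off by solving \eqref{eq-bv-3.16prime-linear-mass-preserve} for $\ddot{\mathcal R}$ and differentiating once in $t$, using the established decay of $\varrho(1,t),\,\partial_t\varrho(1,t),\,\mathcal R,\,\dot{\mathcal R}$.

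The main obstacle is twofold. Structurally, the H\"older regularity is not optional: the dissipation identity relies on pointwise manipulations of $\log\rho$-type terms that would be unjustified in $L^2$ alone, which is a vestige of the quasi-linearity of the underlying nonlinear problem, and the Schauder machinery must accommodate the dynamic (second-order-in-time) boundary condition coupling $\varrho$ and $\mathcal R$, so that matching function spaces on the boundary ($C^{2+2\alpha}_y$ trace versus $C^{3+\alpha}_t$ for $\mathcal R$) is the delicate point. Dynamically, because $\mathcal L_-$ is non-self-adjoint, the energy method yields convergence but \emph{no} quantitative rate; that rate must be sought, as in Theorem \ref{thm-exp-decay-LT}, via the resolvent analysis of $Q(\tau)$ rather than through $E_L$.
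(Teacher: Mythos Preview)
Your proposal is correct and follows essentially the same route as the paper: construct the linearized energy $\mathcal E_{\rm total}^L$ as the quadratic part of the nonlinear energy from \cite{LW-vbas2022}, establish its dissipation identity (controlling $\int_{B_1}|\nabla_y u|^2$ and $\mu_l\dot{\mathcal R}^2$) and its positive definiteness on the subspace cut out by \eqref{data-constraint}, obtain global H\"older existence via Schauder theory and the a priori coercive bound, and then deduce decay from integrability of the dissipation together with uniform-in-time regularity and interpolation. The only cosmetic difference is that the paper phrases the decay step through Barbalat's lemma (as in \cite[Proposition 8.1]{LW-vbas2022}) rather than an explicit LaSalle/$\omega$-limit extraction, but these are equivalent here; your remark that the energy method yields no rate because $\mathcal L_-$ is non-self-adjoint is also exactly what the paper emphasizes.
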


The nonlinear asymptotic stability of the equilibrium solutions of \eqref{red-eqns} has been established in \cite{LW-vbas2022} by using an energy dissipation law.
To prove Theorem \ref{thm-asymp-stab-linear} we adopt such an approach based on an appropriate choice of  linearized energy. In view of the quadratic terms in the expansion \cite[(4.27)]{BV-SIMA2000} of the energy for the nonlinear system \eqref{red-eqns}, we define $\mathcal E_{\rm total}^L$, the {\it total energy for the linearized system} \eqref{red-eqns-linear-new}.

\begin{definition}[The total energy]
The total energy of the linearized system \eqref{red-eqns-linear-new} is given by 
\EQ{\label{eq-linear-energy}
\mathcal E_{\rm total}^L &= \mathcal E_{\rm total}^L[u,z,\mathcal R,\dot{\mathcal R}]\\
& = -4\pi\si\mathcal R^2 - 4\pi\Rg T_\infty R_*^2 \mathcal R z - \frac{2\pi\Rg T_\infty R_*^3}{3\rho_*}\,  z^2 + \frac{c_v\ga T_\infty R_*^3}{2\rho_*}  \int_{B_1} u^2\, dx + 2\pi\rho_lR_*^3\dot{\mathcal R}^2.
}
\end{definition}
The following formula for the total energy, $\mathcal E_{\rm total}^L$, is useful for deriving the convexity inequality and  
 a positive lower bound of $\mathcal E_{\rm total}^L$.

\begin{proposition}\label{two-en}
For any $(u,z,\mathcal R)$ satisfying the linearized constant mass condition \eqref{eq-u-mass-conserve},
the linearized total energy, $\mathcal E_{\rm total}^L$,  can be written as the following expression:
\EQ{\label{eq-energy-equation}
\mathcal E_{\rm total}^L &= 2\pi (4\si + 3p_{\infty,*}R_*) \mathcal R^2 - \frac{R_*^2}{8\pi\rho_*^2}\, (6\si + 3p_{\infty,*}R_*) \bke{\int_{B_1} u\, dx}^2 + \frac{c_v\ga T_\infty R_*^3}{2\rho_*}  \int_{B_1} u^2\, dx + 2\pi\rho_lR_*^3\dot{\mathcal R}^2.
}
\end{proposition}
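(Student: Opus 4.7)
The identity is purely algebraic: it asserts that the two expressions for $\mathcal E_{\rm total}^L$ agree whenever the linearized constant mass constraint \eqref{eq-u-mass-conserve} holds. My plan is to eliminate $z$ from the original definition \eqref{eq-linear-energy} using \eqref{eq-u-mass-conserve}, regroup the resulting terms, and finally invoke the equilibrium relation $\mathcal R_g T_\infty \rho_* = p_{\infty,*} + 2\si/R_*$ from \eqref{eq:params} to simplify the coefficients.

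Concretely, write $I:=\int_{B_1}u\,dx$. The mass constraint \eqref{eq-u-mass-conserve} gives
\[
z = -\frac{3}{4\pi}\,I - \frac{3\rho_*}{R_*}\,\mathcal R ,
\]
and consequently
\[
z^2 = \frac{9}{16\pi^2}\,I^2 + \frac{9\rho_*}{2\pi R_*}\,I\,\mathcal R + \frac{9\rho_*^2}{R_*^2}\,\mathcal R^2.
\]
Substituting these expressions into the two $z$-dependent terms of \eqref{eq-linear-energy} and collecting by type, one finds that the cross terms in $I\mathcal R$ cancel identically, leaving only pure $\mathcal R^2$, $I^2$ and $\int u^2\,dx$, $\dot{\mathcal R}^2$ contributions.

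To identify the $\mathcal R^2$ coefficient with $2\pi(4\si+3p_{\infty,*}R_*)$, one uses $\mathcal R_g T_\infty \rho_* = p_{\infty,*} + 2\si/R_*$ in the combination $-4\pi\si + 6\pi\mathcal R_g T_\infty \rho_* R_*$ coming from $-4\pi\si\mathcal R^2$ together with the residual $\mathcal R^2$ contributions from $-4\pi\mathcal R_g T_\infty R_*^2\mathcal R z$ and $-\tfrac{2\pi \mathcal R_g T_\infty R_*^3}{3\rho_*}z^2$. Similarly, the $I^2$ coefficient $-\tfrac{3\mathcal R_g T_\infty R_*^3}{8\pi\rho_*}$ (which is the only $I^2$ contribution, coming from $z^2$) is rewritten as $-\tfrac{3R_*^2}{8\pi\rho_*^2}\,(p_{\infty,*}R_*+2\si) = -\tfrac{R_*^2}{8\pi\rho_*^2}(6\si+3p_{\infty,*}R_*)$ upon the same substitution for $\mathcal R_g T_\infty \rho_*$. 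The $\int u^2\,dx$ and $\dot{\mathcal R}^2$ terms are unchanged, matching the claimed formula \eqref{eq-energy-equation} term by term.

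There is no genuine obstacle: the proof is a one-line substitution followed by careful bookkeeping. The one point that requires attention is the cancellation of the $I\mathcal R$ cross terms, which is the algebraic reason the formula \eqref{eq-energy-equation} contains no mixed term and therefore can be used directly to study positivity (convexity) of $\mathcal E_{\rm total}^L$ in the subsequent energy argument.
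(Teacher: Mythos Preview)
Your proof is correct and follows essentially the same approach as the paper: solve \eqref{eq-u-mass-conserve} for $z$, substitute into \eqref{eq-linear-energy}, observe the $I\mathcal R$ cross terms cancel, and simplify the remaining coefficients via $\Rg T_\infty\rho_* = p_{\infty,*} + 2\si/R_*$. Your write-up is slightly more explicit about the cross-term cancellation and the $I^2$ coefficient, but the argument is identical.
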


\begin{proof}
Firstly, \eqref{eq-u-mass-conserve} gives
\EQN{
z = -\frac3{4\pi} \int_{B_1} u\, dx - \frac{3\rho_*}{R_*}\, \mathcal R.
}
Substituting $z$ in \eqref{eq-linear-energy} by the above, 
\EQN{
\mathcal E_{\rm total}^L 
&= -4\pi\si\mathcal R^2 - 4\pi\Rg T_\infty R_*^2 \mathcal R \bke{ -\frac3{4\pi} \int_{B_1} u\, dx - \frac{3\rho_*}{R_*}\, \mathcal R } - \frac{2\pi\Rg T_\infty R_*^3}{3\rho_*} \bke{ -\frac3{4\pi} \int_{B_1} u\, dx - \frac{3\rho_*}{R_*}\, \mathcal R }^2\\
&\quad + \frac{c_v\ga T_\infty R_*^3}{2\rho_*}  \int_{B_1} u^2\, dx + 2\pi\rho_lR_*^3\dot{\mathcal R}^2\\
&= \bke{-4\pi\si + 6\pi\Rg T_\infty R_*\rho_*} \mathcal R^2 - \frac{2\pi\Rg T_\infty R_*^3}{3\rho_*} \bke{\frac3{4\pi} \int_{B_1} u\, dx}^2 + \frac{c_v\ga T_\infty R_*^3}{2\rho_*}  \int_{B_1} u^2\, dx + 2\pi\rho_lR_*^3\dot{\mathcal R}^2.
}
Then \eqref{eq-energy-equation} follows from the constitutive law $\Rg T_\infty\rho_* = p_{\infty,*} + 2\si/R_*$.
\end{proof}

\subsection{Linearized energy dissipation identity}

The energy functional $\mathcal E_{\rm total}^L$ in \eqref{eq-linear-energy} is important since it obeys the following energy dissipation law.

\begin{proposition}[Energy dissipation identity]\label{prop-energy-dissipation}
Assume $(u,z,\mathcal R)$ is a solution of the linearized system \eqref{red-eqns-linear-new}. Then
\EQ{\label{eq-energy-dissipation}
\frac{d}{dt} \mathcal E_{\rm total}^L = - \frac{\ka T_\infty}{\rho_*^2}\, R_* \int_{B_1} |\nb_y u|^2\, dx -16\pi\mu_lR_*\dot{\mathcal R}^2.
}
\end{proposition}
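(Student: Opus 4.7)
The plan is a direct term-by-term differentiation of $\mathcal E_{\rm total}^L$, followed by integration by parts in the heat-type term and algebraic cancellations that exploit the constant-mass constraint and the stress-balance relation \eqref{eq-z}.

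First I would differentiate \eqref{eq-linear-energy} to obtain
\begin{equation*}
\tfrac{d}{dt}\mathcal E_{\rm total}^L = -8\pi\si\mathcal R\dot{\mathcal R} - 4\pi\Rg T_\infty R_*^2(\dot{\mathcal R}z + \mathcal R\dot z) - \tfrac{4\pi\Rg T_\infty R_*^3}{3\rho_*}z\dot z + \tfrac{c_v\ga T_\infty R_*^3}{\rho_*}\!\int_{B_1}\! u\,\pd_t u\, dx + 4\pi\rho_l R_*^3\dot{\mathcal R}\ddot{\mathcal R}.
\end{equation*}
For the integral term I substitute the evolution equation \eqref{eq-u-pde} and integrate by parts; since $u(1,t)=0$, the boundary term vanishes, giving
\begin{equation*}
\int_{B_1} u\,\pd_t u\, dx = -\bar\ka\!\int_{B_1}\!|\nb_y u|^2\, dx - \tfrac{\ga-1}{\ga}\dot z\!\int_{B_1}\! u\, dx.
\end{equation*}
I then eliminate $\int_{B_1} u\, dx$ using the linearized mass-conservation relation \eqref{eq-u-mass-conserve}, writing $\int_{B_1} u\, dx = -\tfrac{4\pi}{3}z - 4\pi\tfrac{\rho_*}{R_*}\mathcal R$.

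The next step is to simplify the prefactors using the parameter identities \eqref{eq:params}: since $\bar\ka = \ka/(\ga c_v\rho_* R_*^2)$, the coefficient $\tfrac{c_v\ga T_\infty R_*^3}{\rho_*}\bar\ka$ collapses to $\tfrac{\ka T_\infty R_*}{\rho_*^2}$, producing exactly the first term on the right-hand side of \eqref{eq-energy-dissipation}. Using $c_v(\ga-1)=\Rg$, the cross-terms coming from $\tfrac{c_v\ga T_\infty R_*^3}{\rho_*}\cdot\tfrac{\ga-1}{\ga}\dot z\bigl(\tfrac{4\pi}{3}z + 4\pi\tfrac{\rho_*}{R_*}\mathcal R\bigr)$ simplify to $\tfrac{4\pi\Rg T_\infty R_*^3}{3\rho_*}z\dot z + 4\pi\Rg T_\infty R_*^2\mathcal R\dot z$. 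These exactly cancel the $z\dot z$ term and the $\mathcal R\dot z$ contribution in the naive differentiation above, leaving
\begin{equation*}
\tfrac{d}{dt}\mathcal E_{\rm total}^L = -\tfrac{\ka T_\infty R_*}{\rho_*^2}\!\int_{B_1}\!|\nb_y u|^2\, dx + 4\pi\dot{\mathcal R}\bigl(-2\si\mathcal R - \Rg T_\infty R_*^2 z + \rho_l R_*^3\ddot{\mathcal R}\bigr).
\end{equation*}
Finally, multiplying the stress-balance relation \eqref{eq-z} by $R_*^2$ yields $\Rg T_\infty R_*^2 z = -2\si\mathcal R + 4\mu_l R_*\dot{\mathcal R} + \rho_l R_*^3\ddot{\mathcal R}$, whose substitution collapses the parenthetical expression to $-4\mu_l R_*\dot{\mathcal R}$, producing the viscous term $-16\pi\mu_l R_*\dot{\mathcal R}^2$ in \eqref{eq-energy-dissipation}.

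The whole argument is a computation; I do not see a conceptual obstruction. The only subtlety is bookkeeping: three separate pairs of terms must cancel exactly (the $\mathcal R\dot z$ pair, the $z\dot z$ pair, and the $2\si\mathcal R$ and $\rho_l R_*^3\ddot{\mathcal R}$ contributions inside the last parenthetical), and these cancellations depend essentially on the coefficients chosen in the definition of $\mathcal E_{\rm total}^L$, on the mass constraint, and on the parameter identities $c_p = \ga c_v$ and $c_p - c_v = \Rg$. Keeping track of these is the main thing to do carefully.
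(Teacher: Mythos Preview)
Your proof is correct and essentially the same as the paper's: both use integration by parts on the $u\,\partial_t u$ term via \eqref{eq-u-pde}, eliminate $\int_{B_1} u\,dx$ through the mass constraint \eqref{eq-u-mass-conserve}, invoke the stress balance \eqref{eq-z}, and simplify with $\bar\ka = \ka/(\ga c_v\rho_* R_*^2)$ and $c_v(\ga-1)=\Rg$. The only organizational difference is that the paper works ``bottom-up'' (multiplying the equations by suitable weights and recognizing a total time derivative) whereas you work ``top-down'' (differentiating $\mathcal E_{\rm total}^L$ and substituting the equations); these are equivalent bookkeepings of the same computation.
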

\begin{proof} 
Recall that the constants $\ka$ and $\bar\ka$ are related in \eqref{eq-bar-ka-def}.
Multiplying \eqref{eq-u-pde} by $u$, integrating over $B_1$ and using \eqref{eq-u-mass-conserve}, we get
\EQ{\label{eq-u-energy-dynamic}
\int_{B_1} \pd_tu\cdot u\,dx &= -\bar\ka \int_{B_1} |\nb_y u|^2\, dx - \bke{1-\frac1\ga} \dot z \int_{B_1} u\, dx\\
&= -\bar\ka \int_{B_1} |\nb_y u|^2\, dx + \bke{1-\frac1\ga} \frac{4\pi}3\, \dot z z +  \bke{1-\frac1\ga}\, 4\pi\, \frac{\rho_*}{R_*}\, \dot z \mathcal R.
}
Multiplying \eqref{eq-z} by $\Rg T_\infty\dot{\mathcal R}$, one has
\EQ{\label{eq-zRdot}
-\frac{2\si}{R_*^2}\, \mathcal R \dot{\mathcal R} + \frac{4\mu_l}{R_*}\, \dot{\mathcal R}^2 + \rho_lR_* \ddot{\mathcal R} \dot{\mathcal R}
 = \Rg T_\infty z\dot{\mathcal R}.
}

Multiplying \eqref{eq-u-energy-dynamic} by $\frac{c_v\ga T_\infty}{\rho_*}\, R_*^3$ and \eqref{eq-zRdot} by $4\pi R_*^2$, and adding them up,
\EQN{
\frac{c_v\ga T_\infty}{\rho_*}\, R_*^3 \int_{B_1}& \pd_tu\cdot u\, dx - 8\pi\si\mathcal R \dot{\mathcal R} + 16\pi\mu_lR_*\dot{\mathcal R}^2 + 4\pi\rho_lR_*^3 \ddot{\mathcal R} \dot{\mathcal R} \\
&= -\bar\ka\, \frac{c_v\ga T_\infty}{\rho_*}\, R_*^3 \int_{B_1} |\nb_y u|^2\, dx + \bke{1-\frac1\ga} \frac{4\pi}3\, \frac{c_v\ga T_\infty}{\rho_*}\, R_*^3 \dot z z \\
&\quad + \bke{1-\frac1\ga} 4\pi\, \frac{\rho_*}{R_*}\, \frac{c_v\ga T_\infty}{\rho_*}\, R_*^3 \dot z\mathcal R + 4\pi\Rg T_\infty R_*^2 z \dot{\mathcal R}.
}
Using the relations of the parameters: \eqref{eq-bar-ka-def} and $c_v(\ga-1) = \Rg$, we deduce, after a rearrangement, from the above equation that
\EQN{
- \frac{\ka T_\infty}{\rho_*^2}\, R_* \int_{B_1} |\nb_y u|^2\, dx -16\pi\mu_lR_*\dot{\mathcal R}^2
&= - \frac{4\pi\Rg T_\infty}{3\rho_*}\, R_*^3 \dot z z - 4\pi \Rg T_\infty R_*^2 \dot z\mathcal R - 4\pi\Rg T_\infty R_*^2 z \dot{\mathcal R}\\
&\quad + \frac{c_v\ga T_\infty}{\rho_*}\, R_*^3 \int_{B_1} \pd_tu\cdot u\, dx - 8\pi\si\mathcal R \dot{\mathcal R} + 4\pi\rho_lR_*^3 \ddot{\mathcal R} \dot{\mathcal R} .
}
This implies the energy dissipation law.
\end{proof}

\subsection{Convexity and positivity of the linearized energy}\label{sec:convexity}


At the heart of the nonlinear stability analysis of  \eqref{red-eqns} in \cite{LW-vbas2022} 
is the local convexity of the total energy around the equilibrium $(\rho_*[M],\rho_*[M])$, relative to small perturbations of
 mass, $M$. 
Here we note the convexity of the linearized energy, associated with  \eqref{red-eqns-linear-new}.

\begin{proposition}[Convexity and positivity of the linearized energy]\label{prop-coercivity}
For any $(u,z,\mathcal R)$ we have 
\EQ{\label{eq-coercivity}
\mathcal E_{\rm total}^L \ge 2\pi (4\si + 3p_{\infty,*}R_*) \mathcal R^2 + \frac{c_vT_\infty R_*^3}{2\rho_*} \int_{B_1} u^2\, dx
+ 2\pi\rho_lR_*^3\dot{\mathcal R}^2.
}
\end{proposition}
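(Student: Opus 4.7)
The natural starting point is the equivalent expression \eqref{eq-energy-equation} from Proposition \ref{two-en}, in which the linearized mass constraint has already been used to eliminate $z$. Comparing \eqref{eq-energy-equation} with the target \eqref{eq-coercivity} term by term, the $\mathcal R^2$ and $\dot{\mathcal R}^2$ contributions coincide exactly, so the whole assertion collapses to a purely $u$-only estimate: one must prove
\EQN{
\frac{c_v\ga T_\infty R_*^3}{2\rho_*}\int_{B_1}u^2\,dx - \frac{R_*^2(6\si+3p_{\infty,*}R_*)}{8\pi\rho_*^2}\bke{\int_{B_1}u\,dx}^2 \ \ge\ \frac{c_v T_\infty R_*^3}{2\rho_*}\int_{B_1}u^2\,dx.
}
Splitting $c_v\ga = c_v + c_v(\ga-1)$, this amounts to showing that the surplus $\frac{c_v(\ga-1)T_\infty R_*^3}{2\rho_*}\int_{B_1}u^2\,dx$ dominates the negative $\bke{\int_{B_1}u\,dx}^2$ contribution.

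\noindent\textbf{Execution.} First, invoke the thermodynamic identity $c_v(\ga-1)=\Rg$ from \eqref{eq:params} to rewrite the surplus as $\frac{\Rg T_\infty R_*^3}{2\rho_*}\int_{B_1}u^2\,dx$. Next, Cauchy--Schwarz on the unit ball $B_1$ (of volume $4\pi/3$) yields $\bke{\int_{B_1}u\,dx}^2 \le \frac{4\pi}{3}\int_{B_1}u^2\,dx$. Applying this bounds the negative term and reduces the inequality to the purely scalar inequality
\EQN{
\frac{\Rg T_\infty R_*^3}{2\rho_*} \ \ge\ \frac{R_*^2(6\si+3p_{\infty,*}R_*)}{8\pi\rho_*^2}\cdot\frac{4\pi}{3} \ =\ \frac{R_*^2(2\si+p_{\infty,*}R_*)}{2\rho_*^2},
}
which, after multiplying both sides by $2\rho_*^2/R_*^2$, becomes $\Rg T_\infty \rho_* \ge p_{\infty,*}+2\si/R_*$. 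This holds with \emph{equality}, by the equilibrium constitutive law $p_* = \Rg T_\infty \rho_* = p_{\infty,*}+2\si/R_*$ recorded in \eqref{eq:params}, and this completes the derivation of \eqref{eq-coercivity}.

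\noindent\textbf{Main obstacle.} There is essentially no obstacle: the whole proof reduces to one application of Cauchy--Schwarz after invoking the alternative energy expression of Proposition \ref{two-en}. What is noteworthy is that the two independent equilibrium identities $c_v(\ga-1)=\Rg$ (thermodynamic) and $\Rg T_\infty \rho_* = p_{\infty,*}+2\si/R_*$ (Young--Laplace/ideal gas) conspire to give exact equality in the reduced scalar inequality, leaving no slack. Consequently the coefficient $c_v$ (rather than $c_v\ga$) in front of $\int_{B_1}u^2\,dx$ in \eqref{eq-coercivity} is the best one can obtain along this route; improving it would require exploiting the homogeneous Dirichlet condition $u(1,t)=0$ to upgrade Cauchy--Schwarz to a Poincar\'e-type inequality on $B_1$.
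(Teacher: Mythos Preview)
Your proof is correct and follows essentially the same route as the paper: both start from the alternative expression \eqref{eq-energy-equation} of Proposition \ref{two-en}, apply Cauchy--Schwarz $\bke{\int_{B_1}u}^2\le\frac{4\pi}{3}\int_{B_1}u^2$, and then reduce the resulting coefficient via the identities $c_v(\ga-1)=\Rg$ and $\Rg T_\infty\rho_*=p_{\infty,*}+2\si/R_*$. Your additional observation that the scalar inequality is saturated (so $c_v$ is sharp along this argument) is a nice remark not made explicitly in the paper.
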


\begin{proof}
By the linearized constant mass constraint \eqref{eq-u-mass-conserve}, the total energy can be written as in \eqref{eq-energy-equation}. 
Using the Cauchy-Schwarz inequality, $\bke{\int_{B_1} u}^2 \le |B_1| \int_{B_1} u^2 = \frac{4\pi}3 \int_{B_1} u^2$ in \eqref{eq-energy-equation},
\EQN{
\mathcal E_{\rm total}^L &\ge 2\pi (4\si + 3p_{\infty,*}R_*) \mathcal R^2 + \bkt{ - \frac{R_*^2}{8\pi\rho_*^2}\, (6\si + 3p_{\infty,*}R_*)\, \frac{4\pi}3 + \frac{c_v\ga T_\infty R_*^3}{2\rho_*}  } \int_{B_1} u^2\, dx\\
&\quad + 2\pi\rho_lR_*^3\dot{\mathcal R}^2\\
&= 2\pi (4\si + 3p_{\infty,*}R_*) \mathcal R^2 + \frac{c_vT_\infty R_*^3}{2\rho_*} \int_{B_1} u^2\, dx
+ 2\pi\rho_lR_*^3\dot{\mathcal R}^2,
}
where we've used $c_v\ga = c_v + \Rg$, $\Rg T_\infty\rho_* = p_{\infty,*} + 2\si/R_*$. 
This completes the proof of Proposition \ref{prop-coercivity}. \end{proof}

\subsection{Outline of the Proof of Theorem \ref{thm-asymp-stab-linear}}\label{proof-thm-asymp-stab-linear}

We start by proving Part (1), existence of unique global in time solution in $C^{2+2\al}$.
The first step is to apply a similar Leray-Schauder fixed point argument in H\"older spaces to that  used in the local well-posedness proof of \cite[Theorem 3.1]{BV-SIMA2000}. 
In the proof of \cite[Theorem 3.1]{BV-SIMA2000}, the classical regularity theory for quasilinear parabolic equations is needed. Here, we only need to use the regularity theory for linear parabolic equations.
Next, using the convexity inequality \eqref{eq-coercivity} and an interpolation inequality, one can show the global existence of solutions $(u,z,\mathcal R)$ in H\"older spaces and derive a uniform bound following from the same bootstrap argument in \cite[Theorem 4.1]{BV-SIMA2000}.

Next, we show the convergence stated in Part (2).
Integrating the  energy dissipation identity of Proposition \ref{prop-energy-dissipation} we have for any $T>0$:
\begin{align*}
\mathcal E_{\rm total}^L(T) + \frac{\ka T_\infty}{\rho_*^2}\, R_* \int_0^T \ \int_{B_1} |\nb_y u|^2\, dx\ ds + 16\pi\mu_lR_*\int_0^T \dot{\mathcal R}^2(s) ds =  \mathcal E_{\rm total}^L (0)  
\end{align*}
By the convexity and positivity inequality of Proposition \ref{prop-coercivity}, 
\EQ{\label{eq-unif-bound}
 \int_0^T \ \int_{B_1} |\nb_y u|^2\, dx\ ds  \quad \textrm{and}\quad 
\mu_l \int_0^T \dot{\mathcal R}^2(s) ds 
}
are uniformly bounded  as functions of $T\in\mathbb{R}_+$ in terms of the initial data. With this observation as a starting point, we apply a similar argument to that in \cite{LW-vbas2022} (for the nonlinear system \eqref{red-eqns}),  obtain the time-decay results asserted in Theorem \ref{thm-asymp-stab-linear}.


The uniform bound  \eqref{eq-unif-bound} and the regularity properties of the $u$ and $\mathcal R$
(e.g. that $\int_{B_1} |\nb_yu|^2\, dx$ and $\dot{\mathcal R}^2$ are uniformly continuous functions in $t$)
 can be used together with Barbalat's lemma as a point of departure for proving the time decay of $u$ and $\mathcal R$ in higher norms.
 See the proof of \cite[Proposition 8.1]{LW-vbas2022}.

\section{Exponential time-decay of linearized dynamics}\label{sec-exp}
This section is devoted to the proof of exponential time-decay of the solution of the IVP 
for  the linearized system, with data constrained by the linearized fixed mass constraint.

Introduce a norm for measuring the size of the perturbation in the $L^2$ setting
\EQ{\label{eq-def-oldnorm}
\oldnorm{\left(\varrho(\cdot,t),\mathcal R(t),\dot{\mathcal R}(t)\right)}_{L^2} 
  \equiv \| \varrho(\cdot,t) \|_{L^2_y(B_1)} + |\mathcal R(t)| + \abs{\dot{\mathcal R}(t)}.
}
We now state the main exponential stability theorem for the linearized system \eqref{red-eqns-linear-mass-preserve}.

\begin{theorem}\label{thm-exp-decay-LT}
The initial value problem for \eqref{red-eqns-linear-mass-preserve} with $\chi>0$ (or equivalently $\kappa>0$), starting from a $L^2$ initial data $\varrho(\cdot,0)$ satisfying the linearized constant mass constraint \eqref{data-constraint},
has a unique global in time solution $t\mapsto(\varrho(\cdot,t),\mathcal R(t))\in L^2_y(B_1)\times\R$.
Moreover,
there is a constant  $\beta>0$ such that the following holds:
\begin{enumerate}
\item Given initial conditions which satisfy \eqref{uzR-data}, there is a constant $C_0$, depending on the initial data, 
such that for all $t>0$:
Then,  
\[\oldnorm{\left(\varrho(\cdot,t),\mathcal R(t),\dot{\mathcal R}(t)\right)}_{L^2} \le C_0\,  e^{-\be t},\]
where the norm is defined in  \eqref{eq-def-oldnorm}.

 \item The exponential decay rate constant, $\beta$, having units $\textrm{[time]}^{-1}$,  can be taken to be:
\EQ{\label{eq-be-sharp-def}
\be &= \min\Bigg\{ \bke{1 - \sqrt{\dfrac{\vartheta(\ga)}{ \frac{p_{\infty,*}R_*}{2p_{\infty,*}R_*+6\si} + \vartheta(\ga) }}}\pi^2\bar\ka,\  \sqrt{ \e \frac{2p_*}{\rho_lR_*^2} },\\
&\qquad \frac{2\mu_l}{\rho_lR_*^2} + \mathbbm{1}_{\De\le0}\, \frac{(1-\ve)^2\vartheta(\ga)p_*}{\pi^4\bar\ka \rho_lR_*^2} \bke{\frac{4\pi^4}{90} + (1-\ve)^{3/4} O\bke{\bke{\frac1{\pi^2\bar\ka}\sqrt{ \frac{2p_*}{\rho_lR_*^2 }}}^{3/2}}} - \mathbbm{1}_{\De>0}\, \frac{\sqrt{\De}}{2\rho_lR_*} \Bigg\}.
}
Here, $\bar\ka = \chi/R_*^2$, $\vartheta(\ga)=1-\ga^{-1}$, $\ve\in(0,1)$ is arbitrary, and $\De:= \bke{ \frac{4\mu_l}{R_*} }^2 - 8\rho_lp_*$.
\end{enumerate}
\end{theorem}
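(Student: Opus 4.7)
The plan is to assemble the result from the infinite-dimensional dynamical system reformulation of Proposition \ref{prop-dyn-system-form} together with the spectral information in Proposition \ref{prop-spec-sec}, and then do the real work of extracting the explicit rate \eqref{eq-be-sharp-def} from a careful study of the zero set of $Q(\tau)$. First, global existence and uniqueness in $L^2_y(B_1)\times\mathbb{R}\times\mathbb{R}$ is immediate from Proposition \ref{prop-asymp-stab-linear-u}: the map $(\varrho,\mathcal R,\dot{\mathcal R})\mapsto \mathbf{w}=(\mathcal R,\dot{\mathcal R},c_1,c_2,\dots)^\top$ using the substitutions \eqref{uz-def} and the Dirichlet eigen-expansion \eqref{eq-u-expansion} is a topological isomorphism between the constraint-satisfying states and $\ell^2$, and the linearized system is equivalent to $\dot{\mathbf{w}}=\mathcal L_-\mathbf{w}$ via Proposition \ref{prop-dyn-system-form}.

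Next I would obtain the bound $\oldnorm{(\varrho(\cdot,t),\mathcal R(t),\dot{\mathcal R}(t))}_{L^2}\le C_0 e^{-\beta t}$ by combining two ingredients: (a) the analytic-semigroup decay $\|e^{\mathcal L_- t}\|_{\ell^2\to\ell^2}\le Ce^{-\beta t}$ from Proposition \ref{prop-spec-sec}(4), which yields $\|\mathbf{w}(t)\|_{\ell^2}\le Ce^{-\beta t}\|\mathbf{w}(0)\|_{\ell^2}$; and (b) the recovery identities $\varrho(y,t)=u(y,t)+z(t)$ with $z(t)$ expressed via \eqref{eq-z-reduction} as a bounded linear functional of $\mathbf{w}(t)$, and $\dot{\mathcal R}(t)$ expressed via \eqref{eq-dotR-u} as a bounded linear combination of $\pd_y u(1,t)$ and $\dot z(t)$. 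Since all these functionals are $\ell^2$-bounded on the domain of $\mathcal L_-$ (the Dirichlet-eigenfunction coefficients of $u$ decay sufficiently fast for $\mathbf{w}(t)\in \mathcal D(\mathcal L_-)$ by the analyticity of the semigroup for $t>0$), one obtains the stated $L^2$-exponential decay of the original triple, with $C_0$ controlled by $\|\mathbf{w}(0)\|_{\ell^2}\simeq \|\varrho(\cdot,0)\|_{L^2}+|\mathcal R(0)|+|\dot{\mathcal R}(0)|$.

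The substantive step, and the main obstacle, is the explicit rate \eqref{eq-be-sharp-def}. Since the spectrum equals the zero set of $Q(\tau)$ (Proposition \ref{prop-spec-sec}(1)), I would show $Q(\tau)\neq 0$ on the strip $\{\Re\tau>-\beta\}$ by a case analysis dictated by the structure of $Q(\tau)$ in \eqref{eq-Q-def}, which is the product of the rational ``thermal'' factor $\frac{4\pi}{3}-\frac{8(\ga-1)}{\pi\ga}\sum_{j\ge 1}\frac{\tau}{j^2(\pi^2\bar\kappa j^2+\tau)}$ with the ``mechanical'' quadratic $\rho_l R_*\tau^2+\frac{4\mu_l}{R_*}\tau-\frac{2\sigma}{R_*^2}$, offset by $4\pi\rho_*/R_*$. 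The three terms of the minimum in \eqref{eq-be-sharp-def} correspond respectively to (i) zeros close to the thermal-diffusion poles $\tau=-\pi^2\bar\kappa j^2$, which I would control by a quantitative perturbative argument off the $j=1$ pole using the explicit formula for the series in the thermal factor; (ii) zeros associated with low-frequency oscillations, where I would bound the imaginary part of any purported zero via the mechanical quadratic balanced against the small-$|\tau|$ expansion of the series; and (iii) zeros near the damped-oscillator roots of the mechanical quadratic, split by the sign of the discriminant $\Delta=(4\mu_l/R_*)^2-8\rho_l p_*$ (over- vs.\ under-damped regimes) and refined by estimating the perturbation from the thermal series at $\ve$-close to the critical frequency $\sqrt{2p_*/(\rho_l R_*^2)}$.

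The technical core of step (iii) is showing that any $\tau=-\alpha+i\omega$ with $Q(\tau)=0$ must satisfy $\alpha\ge\beta$ by separating real and imaginary parts of $Q(\tau)=0$, using monotonicity of $\Re\sum \tau/(j^2(\pi^2\bar\kappa j^2+\tau))$ in appropriate half-planes, and carefully tracking the Basel-type constant $\sum j^{-4}=\pi^4/90$ that produces the $4\pi^4/90$ coefficient in \eqref{eq-be-sharp-def}. I would defer the verification of these bounds to Appendix \ref{beta-est} (Lemma \ref{lem-negative-upper-bound}), the details of which I expect to dominate the length of the argument; the rest of Theorem \ref{thm-exp-decay-LT} is then a one-line application of the semigroup estimate \eqref{eq-semigroup-decay-est} with that explicit $\beta$.
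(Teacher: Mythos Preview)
Your proposal is correct and follows essentially the same route as the paper: reduce to the $\ell^2$ system $\dot{\mathbf{w}}=\mathcal L_-\mathbf{w}$ via Propositions \ref{prop-asymp-stab-linear-u} and \ref{prop-dyn-system-form}, invoke the semigroup bound \eqref{eq-semigroup-decay-est} from Proposition \ref{prop-spec-sec}, and defer the explicit $\beta$ to Lemma \ref{lem-negative-upper-bound}. Two minor corrections: the recovery of $\dot{\mathcal R}$ is simpler than you suggest, since it is literally the second component of $\mathbf{w}$ and needs no appeal to \eqref{eq-dotR-u} or to domain regularity; and the actual argument in Lemma \ref{lem-negative-upper-bound} is not the perturbative one you sketch (off thermal poles and mechanical roots) but rather a case split on the size of $\eta=\Im\tau$, separating real and imaginary parts of $Q(\xi+i\eta)=0$ and running contradiction arguments in the regimes $\eta=0$, $0<\eta^2\le(1-\ve)\cdot 2p_*/(\rho_lR_*^2)$, and $\eta^2>(1-\ve)\cdot 2p_*/(\rho_lR_*^2)$, with the discriminant $\Delta$ entering only in the $\eta=0$ case.
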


\begin{remark}\label{rmk-higher-norm-conv}
%
Suppose the initial data $\varrho(\cdot,0)$ is in $C^{2+2\al}_y(B_1)$.
By a similar argument as in the proof of Theorem \ref{thm-asymp-stab-linear} through the Barbalat's lemma and the interpolation inequality in \cite[Lemma D.1]{LW-vbas2022}, one can bootstrap the $L^2$-exponential decay to $C^{2+2\al}$-exponential decay. That is, one have
\[
\norm{\varrho(\cdot,t)}_{C^{2+2\al}_y(B_1)},\ \ddot{\mathcal R}(t),\ \dddot{\mathcal R}(t) = O(e^{-\de\be t})\ \text{ as }\ t\to\infty,
\]
for some $\de<1$, where $\be$ as in \eqref{eq-be-sharp-def}.
\end{remark}



\begin{proof}[Proof of Theorem \ref{thm-exp-decay-LT}]
First note that, via a change of variables, the initial value problem \eqref{red-eqns-linear-mass-preserve} for $(\varrho,\mathcal R)$ is equivalent to the system \eqref{red-eqns-linear-new} for $(u,z,\mathcal R)$.
Since $\varrho(\cdot,0)\in L^2_y(B_1)$, $u(\cdot,0)\in L^2_y(B_1)$.
By Proposition \ref{prop-asymp-stab-linear-u}, there exists a unique global in time solution $(u,z,\mathcal R)\in L^2_y(B_1)\times\R\times\R$ of the system \eqref{red-eqns-linear-new}.
Further converting the system \eqref{red-eqns-linear-new} to the infinite-dimensional dynamical system \eqref{eq-dyn-sys-unforced}, $\dot{\bf w} = \mathcal L_-{\bf w}$, in Proposition \ref{prop-dyn-system-form},
and using the semigroup decay estimate \eqref{eq-semigroup-decay-est},
we deduce that $\norm{ {\bf w}(t) }\le C e^{-\be t}$ for some $C>0$ depending on the initial data ${\bf w}(0)$, where $\be>0$ can be taken to be as in \eqref{eq-be-sharp-def-appendix}.
Converting ${\bf w}(t)$ back to the state variables $\varrho(\cdot,t)$ and $\mathcal R(t)$, we complete the proof of Theorem \ref{thm-exp-decay-LT}.
\end{proof}


\section{Time-periodic bubble oscillations of the nonlinear bubble-liquid system,
 and their nonlinear asymptotic stability}
\label{sec:forced-bubb}

In this section we consider the periodically forced nonlinear bubble-fluid system \eqref{red-eqns} with $2\pi/\omega$- time periodic pressure field at infinity:
\EQ{\label{eq-periodic-forcing-0}
p_\infty(t) = p_{\infty,*} + \psi(t;\om,A),
}
where $\psi(t;\om,A)$ is $2\pi/\om$- periodic and bounded in $t$ such that, for any give frequency $\om\in\R_+$, $\psi(t;\om,A)\to0$ as the amplitude $A\to0$.
An example is $\psi(t;\om,A)=A\cos(\om t)$.
For the time-periodically forced model, we prove the existence and stability of time-periodic  bubble oscillations.
The idea of the proof is to determine the initial condition, which gives rise to a $2\pi/\om$- time periodic solution by finding fixed points of a Poincar\'e map; see the classical strategy discussed in \cite[Chapter 8]{Henry-book1981} for semilinear dynamical systems.
Due to the quasilinear character of our free-boundary PDE model, an a priori regularity estimate for the solution is required.
To this end, we first derive an equivalent infinite-dimensional dynamical system in Section \ref{sec-dyn-sys-derivation}. 
With the aid of the dynamical system, we are able to estimate the nonlinear term and establish the global well-posedness for the periodically forced problem near the equilibrium of the unforced problem in the $C^{2+2\al}$ framework (see Section \ref{sec-global-wellp}).
The two key points in our analysis are the spectral analysis for the linearized operator (Proposition \ref{prop-spec-sec}) and the nonlinear estimate \eqref{eq-nonlinear-est}.

In the following discussion, we fix a frequency $\om\in\R_+$ and suppress the dependence of $\psi(t;\om,A)$ on $\om$ in \eqref{eq-periodic-forcing-0} as
\EQ{\label{eq-periodic-forcing}
p_\infty(t) = p_{\infty,*} + \psi(t;A).
}
Here, $\psi(t;A)$ is $2\pi/\om$- periodic and bounded in $t$ such that $\psi(t;A)\to0$ as the amplitude $A\to0$.


\subsection{A dynamical system formulation of the periodically forced problem}\label{sec-dyn-sys-derivation}

To study the solution near the equilibrium of the unforced problem, we derive two equivalent systems of \eqref{red-eqns-mass} below.

\begin{proposition}\label{prop-equivalent-systems}
Given $(\rho_*,R_*)$ satisfying $(4\pi/3)\rho_*R_*^3 = M$ and $\Rg T_\infty\rho_* = p_{\infty,*} + 2\si/R_*$.
Let $(\rho,R)$ denote a solution of the nonlinear free boundary problem \eqref{red-eqns-mass} with $2\pi/\omega$- time periodic pressure field at infinity, \eqref{eq-periodic-forcing} with amplitude $A>0$.
Decompose 
\EQ{\label{eq-perturbation}
\rho(R(t)y,t) = \rho_* + u(y,t) + z(t),\qquad 
z(t) = \rho(R(t),t) - \rho_*,\qquad
R(t) = R_* + \mathcal R(t).
}
Then,
\begin{enumerate}
\item  \eqref{red-eqns-mass} is equivalent to the following system for $(u, \mathcal R)$ with zero-Dirichlet boundary condition 
\begin{subequations}
\label{red-eqns-linear-new-1}
\begin{align}
\pd_tu &= \bar\ka \De_yu - \bke{ 1 - \frac1{\ga} } \dot z + F,\quad 0\le y\le1,\qquad u(1,t)=0,\quad \ t>0,\label{eq-u-pde-1}\\
z(t) &= \frac1{\Rg T_\infty} \bke{ -\frac{2\si}{R_*^2}\mathcal R + \frac{4\mu_l}{R_*}\, \dot{\mathcal R} + \rho_lR_*\ddot{\mathcal R} } + H + \frac{\psi(t;A)}{\Rg T_\infty},\quad t>0,\label{eq-z-1}\\
\int_{B_1} u &= -\frac{4\pi}3 z - 4\pi \frac{\rho_*}{R_*}\, \mathcal R + G,\label{eq-conserv-mass-1}
\end{align}
\end{subequations}
where $F$ and $H$ are defined as in \cite[(9.4a), (9.4c)]{LW-vbas2022}, and
\EQ{\label{eq-def-G}
G = -\bke{ \frac{3\mathcal R^2}{R_*^2} + \frac{\mathcal R^3}{R_*^3} } \frac{4\pi}3\rho_* - \bke{\frac{3\mathcal R}{R_*} + \frac{3\mathcal R^2}{R_*^2} + \frac{\mathcal R^3}{R_*^3} } \bke{\int_{B_1} u + \frac{4\pi}3 z}.
}

\item Let $c_j = c_j(t)$ denote the $j$-th coefficients in the radial-Dirichlet-eigenfunction decomposition of $u$ as in \eqref{eq-u-expansion}.

Then, \eqref{red-eqns-linear-new-1} is further equivalent to the following infinite-dimensional dynamical system for ${\bf w}= (\mathcal R,\dot{\mathcal R}, c_1,c_2,\cdots)^\top$
\EQ{\label{eq-carr-6.3.1-simple-form} 
\dot{\bf w} = \mathcal L_- {\bf w} + \mathcal N^1({\bf w}) \dot{\bf w} + \mathcal N^0({\bf w}) - \frac{\psi(t;A)}{\rho_lR_*}
\begin{bmatrix}
0\\
1\\
\frac{3\rho_*}{R_*}\ga\Ga_1\\
\frac{3\rho_*}{R_*}\ga\Ga_2\\
\vdots
\end{bmatrix},
}
where $\mathcal L_-$ is a linear operator given in \eqref{eq-def-L-}, and $\mathcal N^1({\bf w})$ and $\mathcal N^0({\bf w})$ are given in \eqref{def-N1-N0-simple-form}.
\end{enumerate}
\end{proposition}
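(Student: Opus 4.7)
The plan is a direct computation: change variables to a fixed ball, substitute the decomposition \eqref{eq-perturbation}, and separate linear from nonlinear contributions. Part (1) is essentially a tracking-of-remainders exercise, and Part (2) parallels the unforced derivation in Proposition \ref{prop-dyn-system-form}.

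For Part (1), I would first pass to $y=r/R(t)$, writing $\rho(R(t)y,t)=\rho_*+u(y,t)+z(t)$, so that $\pd_t\rho|_r = \pd_t u + \dot z - (\dot R/R)\,y\,\pd_y u$ and $\De_r = R(t)^{-2}\De_y$. Inserting into \eqref{eq-bv-3.10prime-mass} and separating the leading linear piece at $(\rho_*,R_*)$ delivers $\pd_t u = \bar\ka \De_y u - (1-\ga^{-1})\dot z + F$, with $F$ absorbing the higher-order remainder (identified with \cite[(9.4a)]{LW-vbas2022}) and with $u(1,t)=0$ built in by the choice $z(t)=\rho(R(t),t)-\rho_*$. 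For \eqref{eq-z-1}, substituting $\rho(R(t),t)=\rho_*+z$, $R(t)=R_*+\mathcal R$, and $p_\infty(t)=p_{\infty,*}+\psi(t;A)$ into \eqref{eq-bv-3.16prime-mass} and using $\Rg T_\infty \rho_* = p_{\infty,*}+2\si/R_*$ isolates the displayed linear RHS together with the forcing $\psi(t;A)/(\Rg T_\infty)$ and the nonlinear remainder $H$ (matching \cite[(9.4c)]{LW-vbas2022}). For \eqref{eq-conserv-mass-1}, I would compute $M = R(t)^3\int_{B_1}(\rho_*+u+z)\,dy$, equate with $\tfrac{4\pi}{3}\rho_*R_*^3$, and expand $(1+\mathcal R/R_*)^3$; the linear terms reproduce \eqref{eq-conserve-mass-linear}, while the collected quadratic and cubic remainder in $\mathcal R/R_*$ is precisely $G$ as in \eqref{eq-def-G}.

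For Part (2), I would expand $u(y,t)=\sum_j c_j(t)\phi_j(y)$ and project \eqref{eq-u-pde-1} against $\phi_k$ in $L^2(B_1)$ to get $\dot c_k = -\bar\ka\la_k c_k - \Ga_k \dot z + (\text{projected }F)$. Using \eqref{eq-conserv-mass-1} to solve for $z$ in terms of $\mathcal R$, $\{c_j\}$, and $G$, and plugging into \eqref{eq-z-1}, I obtain a scalar ODE for $\ddot{\mathcal R}$ carrying the forcing $-\psi(t;A)/(\rho_l R_*)$ plus $H$-nonlinearities. Packaged, this takes the matrix form $\mathsf{M}(\mathbf{w})\dot{\mathbf{w}} = \mathsf{A}\mathbf{w} + \mathbf{f}(t) + (\text{nonlinear})$, where at leading order $\mathsf{M}$ and $\mathsf{A}$ are precisely the matrices appearing in the proof of Proposition \ref{prop-dyn-system-form} and $\mathbf{f}(t) = (0,-\psi(t;A)/(\rho_l R_*),0,\ldots)^\top$. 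Inverting $\mathsf{M}$ via \eqref{eq-inverse-matrix} reproduces $\mathcal L_-$ in the linear part; since $\mathbf{f}$ is supported in the second row, $\mathsf{M}^{-1}\mathbf{f}$ is $-\psi(t;A)/(\rho_l R_*)$ times the second column of the inverse, namely $(0,1,\tfrac{3\rho_*\ga}{R_*}\Ga_1,\tfrac{3\rho_*\ga}{R_*}\Ga_2,\ldots)^\top$, giving exactly the forcing vector displayed in \eqref{eq-carr-6.3.1-simple-form}.

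The main obstacle I anticipate is the bookkeeping that separates $\mathcal N^1(\mathbf{w})\dot{\mathbf{w}}$ from $\mathcal N^0(\mathbf{w})$. The subtlety is that $G$ enters through the elimination for $z$, so $\dot z$ picks up $\partial_{\mathbf{w}} G\cdot\dot{\mathbf{w}}$ terms by the chain rule; together with analogous $\mathcal R$-dependent coefficients of $\dot{\mathcal R}$ coming from $H$ in \eqref{eq-z-1}, these are the genuinely $\dot{\mathbf{w}}$-multiplied contributions that assemble into $\mathcal N^1(\mathbf{w})\dot{\mathbf{w}}$, while all remaining nonlinear pieces of $F$, $H$, $G$ (not proportional to $\dot{\mathbf{w}}$) collect into $\mathcal N^0(\mathbf{w})$. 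Because $F$, $H$, $G$ are smooth polynomial/rational expressions in $\mathcal R$ and in the linear functionals $z$ and $\int_{B_1} u$, regular at $\mathbf{w}=0$, the decomposition is unambiguous; the algebra, though tedious, introduces no new structural difficulty beyond what already appears in the unforced case.
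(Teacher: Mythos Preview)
Your approach is correct and mirrors the paper's proof: substitute \eqref{eq-perturbation}, separate linear from nonlinear terms to obtain \eqref{red-eqns-linear-new-1}, project onto Dirichlet eigenfunctions, and invert the same constant left-hand matrix via \eqref{eq-inverse-matrix} to arrive at \eqref{eq-carr-6.3.1-simple-form}, with the forcing vector coming out exactly as you computed from the second column of the inverse. One small refinement to the bookkeeping you flagged as the main obstacle: since $\dot{\mathcal R}$ is already a component of $\mathbf w$, the paper places $\dot G=G'(\mathcal R)\dot{\mathcal R}$ in $\mathcal N^0$; the genuine $\mathcal N^1(\mathbf w)\dot{\mathbf w}$ contributions are the $\rho_l\mathcal R\,\ddot{\mathcal R}$ piece of $H$ (coefficient of $\ddot{\mathcal R}\in\dot{\mathbf w}$) and the $\dot c_j$-dependence of $F$ itself, arising from the nonlinear $\tfrac{1}{\ga}\tfrac{\dot z}{\rho_*+z}\bigl(\tfrac13 y\,\pd_y u+u\bigr)$ term in the original PDE, which you did not single out.
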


\begin{proof}
To begin with, plugging \eqref{eq-perturbation} into the equation \eqref{red-eqns-mass} and grouping linear and nonlinear terms, one directly obtain \eqref{red-eqns-linear-new-1}.
Note that \eqref{eq-conserv-mass-1} is equivalent to 
\EQ{\label{eq-conserv-mass-0}
z(t) &= \frac{\rho_* R_*^3}{(R_*+\mathcal R)^3} - \rho_* - \frac3{4\pi} \int_{B_1} u.
}

Using the Dirichlet eigenfunction expansion \eqref{eq-u-expansion} in \eqref{eq-u-pde-1} and \eqref{eq-conserv-mass-1} yields
\EQ{\label{eq-dotc-forced}
\dot c_k = -\bar\ka\la_kc_k - \Ga_k\dot z + F_k,\quad F_k = \int_{B_1} F\phi_k,
}
\EQ{\label{eq-z-reduction-1}
z = -\frac3{4\pi} \frac{\ga}{\ga-1} \sum_{j=1}^\infty \Ga_jc_j - \frac{3\rho_*}{R_*} \mathcal R + \frac3{4\pi} G.
}
Using \eqref{eq-z-reduction-1} in \eqref{eq-dotc-forced} and \eqref{eq-z-1}, we deduce
\EQ{\label{eq-dotc-forced-1}
\dot c_k = -\bar\ka\la_kc_k - \Ga_k \bke{ -\frac{3\ga}{4\pi(\ga-1)} \sum_{j=1}^\infty \Ga_j \dot c_j - \frac{3\rho_*}{R_*} \dot{\mathcal R} + \frac3{4\pi} \dot G } + F_k,
}
\EQ{\label{eq-z-2}
\ddot{\mathcal R} &= \frac{\Rg T_\infty}{\rho_lR_*} \bke{  -\frac{3\ga}{4\pi(\ga-1)} \sum_{j=1}^\infty \Ga_j c_j - \frac{3\rho_*}{R_*} \mathcal R + \frac3{4\pi} G } + \frac{2\si}{\rho_lR_*^3}\mathcal R - \frac{4\mu_l}{\rho_lR_*^2}\dot{\mathcal R} - \frac{\Rg T_\infty}{\rho_lR_*}H - \frac{\psi(t;A)}{\rho_lR_*}\\
&= - b \mathcal R - \frac{4\mu_l}{\rho_lR_*^2} \dot{\mathcal R} - d \sum_{j=1}^\infty \Ga_jc_j + \frac{3\Rg T_\infty}{4\pi\rho_lR_*} G - \frac{\Rg T_\infty}{\rho_lR_*} H - \frac{\psi(t;A)}{\rho_lR_*},
}
where $b$ and $d$ are defined as in \eqref{eq-def-b-d}.

Thus, \eqref{eq-z-2} and \eqref{eq-dotc-forced-1} form the infinite-dimensional dynamical system for ${\bf w} = (\mathcal R,\dot{\mathcal R}, c_1,c_2,\cdots)^\top$:
\EQN{
&\begin{pmatrix}
1&0&0&0&\cdots\\
0&1&0&0&\cdots\\
0&-\frac{3\rho_*}{R_*} \Ga_1&1-\frac{3\ga}{4\pi(\ga-1)}\Ga_1^2&-\frac{3\ga}{4\pi(\ga-1)}\Ga_1\Ga_2&\cdots\\
0&-\frac{3\rho_*}{R_*} \Ga_2&-\frac{3\ga}{4\pi(\ga-1)}\Ga_1\Ga_2&1-\frac{3\ga}{4\pi(\ga-1)}\Ga_2^2&\cdots\\
\vdots&\vdots&\vdots&\vdots&\ddots
\end{pmatrix}
\begin{bmatrix}
\mathcal R\\
\dot{\mathcal R}\\
c_1\\
c_2\\
\vdots
\end{bmatrix}^\prime\\
&\qquad=
\begin{pmatrix}
0&1&0&0&\cdots\\
-b & -\frac{4\mu_l}{\rho_lR_*^2}&-d\Ga_1&-d\Ga_2&\cdots\\
0&0&-\bar\ka\la_1&0&\cdots\\
0&0&0&-\bar\ka\la_2&\cdots\\
\vdots&\vdots&\vdots&\vdots&\ddots
\end{pmatrix}
\begin{bmatrix}
\mathcal R\\
\dot{\mathcal R}\\
c_1\\
c_2\\
\vdots
\end{bmatrix}
 + 
\begin{bmatrix}
0\\
\frac{3\Rg T_\infty}{4\pi\rho_lR_*} G - \frac{\Rg T_\infty}{\rho_lR_*}H\\
-\frac3{4\pi}\Ga_1\dot G + F_1\\
-\frac3{4\pi}\Ga_2\dot G + F_2\\
\vdots
\end{bmatrix}
 -\frac{\psi(t;A)}{\rho_lR_*}
\begin{bmatrix}
0\\
1\\
0\\
0\\
\vdots
\end{bmatrix},
}
where the inverse of the matrix on the left hand side above is derived in \eqref{eq-inverse-matrix}.
Left-multiplying the inverse on both sides, we obtain
\EQ{\label{eq-matrix-form}
\begin{bmatrix}
\mathcal R\\
\dot{\mathcal R}\\
c_1\\
c_2\\
\vdots
\end{bmatrix}^\prime
&=
\begin{pmatrix}
0&1&0&0&\cdots\\
-b &-\frac{4\mu_l}{\rho_lR_*^2}&-d\Ga_1&-d\Ga_2&\cdots\\
-b \frac{3\rho_*}{R_*}\ga\Ga_1 & -\frac{12\mu_l\rho_*}{\rho_lR_*^3}\ga\Ga_1 &-\bar\ka\la_1 - e_1\Ga_1^2& -e_2\Ga_1\Ga_2&\cdots\\
-b \frac{3\rho_*}{R_*}\ga\Ga_2 & -\frac{12\mu_l\rho_*}{\rho_lR_*^3}\ga\Ga_2 &- e_1\Ga_1\Ga_2&-\bar\ka\la_2 -e_2\Ga_2^2&\cdots\\
\vdots&\vdots&\vdots&\vdots&\ddots
\end{pmatrix}
\begin{bmatrix}
\mathcal R\\
\dot{\mathcal R}\\
c_1\\
c_2\\
\vdots
\end{bmatrix}\\
&\quad +
\begin{bmatrix}
0\\
\frac{3\Rg T_\infty}{4\pi\rho_lR_*} G - \frac{\Rg T_\infty}{\rho_lR_*}H\\
\frac{3\rho_*}{R_*}\ga\Ga_1\bke{ \frac{3\Rg T_\infty}{4\pi\rho_lR_*} G - \frac{\Rg T_\infty}{\rho_lR_*}H } - \frac3{4\pi} \Ga_1\dot G + F_1 + \frac{3\ga^2}{4\pi(\ga-1)}\Ga_1 \underset{j=1}{\overset{\infty}\sum} \Ga_j\bke{-\frac3{4\pi}\Ga_j\dot G + F_j}\\
\frac{3\rho_*}{R_*}\ga\Ga_2\bke{ \frac{3\Rg T_\infty}{4\pi\rho_lR_*} G - \frac{\Rg T_\infty}{\rho_lR_*}H } - \frac3{4\pi} \Ga_2\dot G + F_2 + \frac{3\ga^2}{4\pi(\ga-1)}\Ga_2 \underset{j=1}{\overset{\infty}\sum} \Ga_j\bke{-\frac3{4\pi}\Ga_j\dot G + F_j}\\
\vdots
\end{bmatrix}\\
&\quad -\frac{\psi(t;A)}{\rho_lR_*}
\begin{bmatrix}
0\\
1\\
\frac{3\rho_*}{R_*}\ga\Ga_1\\
\frac{3\rho_*}{R_*}\ga\Ga_2\\
\vdots
\end{bmatrix},
}
where $e_j$ is defined as in \eqref{eq-def-ej}.

For the nonlinear term, using \eqref{eq-u-expansion} in \eqref{eq-conserv-mass-0},
\EQ{\label{eq-z-reduction-0}
z = \frac{\rho_*R_*^3}{(R_*+\mathcal R)^3} - \rho_* - \frac3{4\pi} \frac{\ga}{\ga-1} \sum_{j=1}^\infty \Ga_j c_j
= z(\mathcal R, c_1, c_2,\cdots).
}
Plugging \eqref{eq-z-reduction-0} into the expression of $F$ in \cite[(9.4a)]{LW-vbas2022}, we obtain
for ${\bf w} = (\mathcal R,\dot{\mathcal R}, c_1,c_2,\cdots)^\top$, ${\bf p} = (\dot{\mathcal R},\ddot{\mathcal R}, \dot c_1, \dot c_2,\cdots)^\top =: (\mathcal S, \mathcal U, d_1,d_2,\cdots)^\top$ that
\EQN{
F({\bf w}, {\bf p}) &= \frac{\ka}{\ga c_v} \bkt{\frac1{(R_*+\mathcal R)^2 \bke{\rho_*+ u +z} } - \frac1{R_*^2\rho_*}} \sum_{j=1}^\infty (-\la_j)c_j\phi_j - \frac{\ka}{\ga c_v}\, \frac{ \abs{\nb_y u}^2}{(R_*+\mathcal R)^2 \bke{\rho_*+ u +z}^2}\\
&\quad - \frac3{4\pi(\ga-1)(\rho_* + z)} \sum_{j=1}^\infty \Ga_j\dot c_j \bke{\frac13 y \sum_{j=1}^\infty c_j\pd_y\phi_j + \sum_{j=1}^\infty c_j\phi_j }\\
&= F^0({\bf w}) + {\bf F}^1({\bf w}){\bf p},
}
where $F^0$ is the same as in \cite[(9.16)]{LW-vbas2022} and
\[
{\bf F}^1({\bf w}) = -\frac3{4\pi(\ga-1)(\rho_*+z)} \bke{\frac13 y \sum_{j=1}^\infty c_j\pd_y\phi_j + \sum_{j=1}^\infty c_j\phi_j } (0,0,\Ga_1,\Ga_2,\cdots).
\]
For $G$, using \eqref{eq-conserv-mass-1} in \eqref{eq-def-G},
\[
G = -\bke{ \frac{3\mathcal R^2}{R_*^2} + \frac{\mathcal R^3}{R_*^3} } \frac{4\pi}3\rho_* - \bke{\frac{3\mathcal R}{R_*} + \frac{3\mathcal R^2}{R_*^2} + \frac{\mathcal R^3}{R_*^3} } \bke{ \frac{4\pi R_*^3\rho_*}{(R_*+\mathcal R)^3} - \frac{4\pi}3 \rho_*} = G(\mathcal R).
\] 
Thus, $\dot G = G'(\mathcal R)\dot{\mathcal R}$.
In particular, $G = G_0({\bf w})$ and $\dot G = G_1({\bf w})$ for some functions $G_0$ and $G_1$.

Therefore, \eqref{eq-matrix-form} can be expressed as the periodically forced dynamical system:
\[
\dot{\bf w} = \mathcal L_- {\bf w} + \mathcal N^1({\bf w}) \dot{\bf w} + \mathcal N^0({\bf w}) - \frac{\psi(t;A)}{\rho_lR_*} 
\begin{bmatrix}
0\\
1\\
\frac{3\rho_*}{R_*}\ga\Ga_1\\
\frac{3\rho_*}{R_*}\ga\Ga_2\\
\vdots
\end{bmatrix}.
\]
Here, the linearized operator $\mathcal L_-$ is displayed in \eqref{eq-def-L-}, and the nonlinear terms are given by:
\EQ{\label{def-N1-N0-simple-form}
\mathcal N^1({\bf w}) &= 
 \begin{pmatrix}
0\\
-\frac{\Rg T_\infty}{\rho_lR_*}\, {\bf H}^1({\bf w})\\
-\frac{3\rho_*}{R_*}\ga\Ga_1 \frac{\Rg T_\infty}{\rho_lR_*}{\bf H}^1({\bf w}) + {\bf F}^1_1({\bf w}) + \frac{3\ga^2}{4\pi(\ga-1)}\Ga_1\sum_{j=1}^\infty {\bf F}^1_j({\bf w})\\
-\frac{3\rho_*}{R_*}\ga\Ga_2 \frac{\Rg T_\infty}{\rho_lR_*}{\bf H}^1({\bf w}) + {\bf F}^1_2({\bf w}) + \frac{3\ga^2}{4\pi(\ga-1)}\Ga_2\sum_{j=1}^\infty {\bf F}^1_j({\bf w})\\
\vdots
 \end{pmatrix},\\
\mathcal N^0({\bf w}) &=  
{\tiny
 \begin{bmatrix}
0\\
\frac{3\Rg T_\infty}{4\pi\rho_lR_*} G_0({\bf w}) - \frac{\Rg T_\infty}{\rho_lR_*}H^0({\bf w})\\
\frac{3\rho_*}{R_*}\ga\Ga_1\bke{ \frac{3\Rg T_\infty}{4\pi\rho_lR_*} G_0({\bf w}) - \frac{\Rg T_\infty}{\rho_lR_*}H^0({\bf w}) } - \frac3{4\pi} \Ga_1 G_1({\bf w}) + F_1^0({\bf w}) + \frac{3\ga^2}{4\pi(\ga-1)}\Ga_1 \underset{j=1}{\overset{\infty}\sum} \Ga_j \bke{-\frac3{4\pi}\Ga_j G_1({\bf w}) + F_j^0({\bf w})}\\
\frac{3\rho_*}{R_*}\ga\Ga_2\bke{ \frac{3\Rg T_\infty}{4\pi\rho_lR_*} G_0({\bf w}) - \frac{\Rg T_\infty}{\rho_lR_*}H^0({\bf w}) } - \frac3{4\pi} \Ga_2 G_1({\bf w}) + F_1^0({\bf w}) + \frac{3\ga^2}{4\pi(\ga-1)}\Ga_2 \underset{j=1}{\overset{\infty}\sum} \Ga_j \bke{-\frac3{4\pi}\Ga_j G_1({\bf w}) + F_j^0({\bf w})}\\
\vdots
 \end{bmatrix},
 }
}
where 
\begin{align*}
{\bf F}_j^1({\bf w}) &= \int_{B_1} {\bf F}^1({\bf w}) \phi_j\, dx,\quad  F_j^0({\bf w}) = \int_{B_1} F^0({\bf w}) \phi_j\, dx,\\
 G_0({\bf w}) &= G,\quad  G_1({\bf w}) = \dot G,\\
{\bf H}^1({\bf w}) &= (0,\frac{\rho_l\mathcal R}{\Rg T_\infty},0,0,\cdots),
\end{align*} and
\[
H^0({\bf w}) = \frac1{\Rg T_\infty} \bkt{ -\frac{\mathcal R}{R_*(R_*+\mathcal R )}\bke{ -\frac{2\si}{R_*}\, \mathcal R + 4\mu_l \dot{\mathcal R} }  + \rho_l \frac32 \dot{\mathcal R}^2 }.
\]
This completes the proof of Proposition \ref{prop-equivalent-systems}.
\end{proof}

\subsection{Global well-posedness of the periodically forced problem near the equilibrium of the unforced model}\label{sec-global-wellp}

In the following, we prove the  existence and uniqueness  of global-in-time solutions to the initial value problem for the periodically forced problem. Our initial data are taken to be $C^{2+2\al}-$ close to a spherical equilibrium of the unforced model. These results on existence and regularity enable us to construct
 the  Poincar\'e map and prove the existence of its fixed points for small amplitude forcing.
Hereafter, for any $(\rho(r,t),R(t))$ defined for $0\le r\le R(t)$, $t>0$, we denote
\[
\overline\rho(y,t) = \rho(R(t)y,t)\ \text{ for }0\le y\le 1.
\]

\begin{theorem}\label{thm-period-wellposed}
Let $\ve_0>0$ be arbitrary.
There exist constants $A_0>0$ and $\eta_0>0$ such that if the initial data $\rho_0(r), R_0, \dot R_0$ satisfies
\EQ{\label{data-small-0}
\oldnorm{(\overline\rho_0(\cdot) - \rho_*,R_0 - R_*,\dot R_0) }_{C^{2+2\al}} \le \eta_0,
}
where $\overline\rho_0(y) = \rho_0(R_0y)$, $\oldnorm{\,\cdot\,}_{C^{2+2\al}}$ is defined in \eqref{eq-def-oldnorm-holder},
and
$(\rho_*,R_*)$ is the equilibrium of the unforced problem with the same mass, $(4\pi/3) R_*^3\rho_* = \int_{B_{R_0}} \rho_0\, dx$ and $\Rg T_\infty\rho_* = p_{\infty,*} + 2\si/R_*$,
then for every fixed $A\in(0,A_0)$
there exists a unique solution $(\rho,R)$ to the nonlinear free boundary problem \eqref{red-eqns} with $2\pi/\omega$- time periodic pressure field at infinity, \eqref{eq-periodic-forcing} with amplitude $A>0$, satisfying 
\EQ{\label{eq-bv-4.43}
\oldnorm{(\overline\rho(\cdot,t) - \rho_*,R(t) - R_*,\dot R_0(t)) }_{C^{2+2\al}} \le \ve_0,\qquad \text{ for all }t>0,
}
where $\overline\rho(y,t) = \rho(R(t)y,t)$.
\end{theorem}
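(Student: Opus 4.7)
The argument would break into three stages: local well-posedness in a Hölder framework, an a priori uniform-in-time smallness estimate, and continuation. The starting point is the dynamical-system formulation of Proposition \ref{prop-equivalent-systems}, which recasts the forced free-boundary problem as
\[
\dot{\mathbf w}=\mathcal L_-\mathbf w+\mathcal N^1(\mathbf w)\dot{\mathbf w}+\mathcal N^0(\mathbf w)-\frac{\psi(t;A)}{\rho_l R_*}\mathbf v,
\]
where $\mathcal L_-$ is the generator analyzed in Proposition \ref{prop-spec-sec}, and $\mathcal N^0,\mathcal N^1$ vanish at least quadratically at $\mathbf w=0$. Since $\|\mathcal N^1(\mathbf w)\|\to 0$ as $\mathbf w\to 0$, for $\mathbf w$ in a small $C^{2+2\al}$-ball I would invert $I-\mathcal N^1(\mathbf w)$ and rewrite the system in the semilinear-like form $\dot{\mathbf w}=\mathcal L_-\mathbf w+\mathcal M(\mathbf w,t;A)$, where $\mathcal M$ remains at least quadratic in $\mathbf w$ and depends linearly on the forcing at the $O(A)$ level.

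For local existence and uniqueness of a $C^{2+2\al,1+\al}_{y,t}$ solution $(\overline\rho,\mathcal R)$ on some interval $[0,T_0]$, I would adapt the Leray--Schauder/Schauder fixed-point construction of \cite[Theorem~3.1]{BV-SIMA2000} used in the proof sketch of Theorem \ref{thm-asymp-stab-linear}. The only change relative to the unforced case is that the boundary ODE \eqref{eq-z-1} acquires the smooth source $\psi(t;A)$, so the classical regularity theory for linear parabolic equations with prescribed coefficients still applies inside the iteration.

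To upgrade this to a global solution satisfying \eqref{eq-bv-4.43}, my plan is to run a continuation argument driven by the Duhamel representation
\[
\mathbf w(t)=e^{\mathcal L_- t}\mathbf w(0)+\int_0^t e^{\mathcal L_-(t-s)}\mathcal M(\mathbf w(s),s;A)\,ds.
\]
Combining the exponential decay $\|e^{\mathcal L_- t}\|\lec e^{-\be t}$ from Proposition \ref{prop-spec-sec} with the anticipated nonlinear estimate \eqref{eq-nonlinear-est}, which controls $\mathcal M$ by a constant times $\|\mathbf w\|_{C^{2+2\al}}^{2}$ plus $A$, yields an integral inequality of the form
\[
\|\mathbf w(t)\|_{C^{2+2\al}}\lec \eta_0\,e^{-\be t}+C_*\int_0^t e^{-\be(t-s)}\bigl(\|\mathbf w(s)\|_{C^{2+2\al}}^{2}+A\bigr)ds.
\]
A standard continuity bootstrap then shows that, provided $\eta_0$ and $A_0$ are chosen small enough depending on $\ve_0$, $C_*$ and $\be$, the quantity $\sup_{t\ge 0}\|\mathbf w(t)\|_{C^{2+2\al}}$ stays below $\ve_0$. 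Translating $\mathbf w$ back to $(\overline\rho,\mathcal R,\dot{\mathcal R})$ via \eqref{eq-perturbation} and the Dirichlet-eigenfunction expansion then yields \eqref{eq-bv-4.43}.

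The hard part is the quasilinear character of the problem, which blocks a direct appeal to Henry's semilinear theory for two related reasons: (i) the term $\mathcal N^1(\mathbf w)\dot{\mathbf w}$ involves a time derivative of the unknown, arising from $\dot z$ in \eqref{eq-u-pde-1} and $\ddot{\mathcal R}$ in \eqref{eq-z-1}; and (ii) $\mathcal N^0$ inherits the gradient-squared term $|\nb_y u|^2/[(R_*+\mathcal R)^{2}(\rho_*+u+z)^{2}]$ from $F^0$, which is why the natural function space is $C^{2+2\al}$ rather than $L^2$. Consequently the inversion of $I-\mathcal N^1(\mathbf w)$ must be performed in the $C^{2+2\al}$ operator topology, and the Duhamel estimate must exploit analytic-semigroup smoothing to upgrade the Hölder regularity of the source to the $C^{2+2\al}$ regularity of the state. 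I expect this step to require pairing Proposition \ref{prop-spec-sec} with the interpolation inequality of \cite[Lemma D.1]{LW-vbas2022} already used in the linear decay proofs earlier in the paper.
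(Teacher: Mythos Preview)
Your overall strategy---local well-posedness via \cite[Theorem~3.1]{BV-SIMA2000}, Duhamel representation against $e^{\mathcal L_- t}$, then continuation---matches the paper. But the implementation of the Duhamel step differs from the paper's in a way that, as you have written it, leaves a gap.

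The displayed integral inequality
\[
\|\mathbf w(t)\|_{C^{2+2\al}}\lec \eta_0\,e^{-\be t}+C_*\int_0^t e^{-\be(t-s)}\bigl(\|\mathbf w(s)\|_{C^{2+2\al}}^{2}+A\bigr)ds
\]
cannot be obtained from the ingredients you cite. Proposition~\ref{prop-spec-sec} gives the semigroup bound $\|e^{\mathcal L_- t}\|\le Ce^{-\be t}$ only as an operator on $\ell^2$, which corresponds to an $L^2$---not $C^{2+2\al}$---norm on the density perturbation. Moreover, the $|\nabla_y u|^2$ piece of $\mathcal N^0$ cannot be bounded in $C^{2+2\al}$ by $\|\mathbf w\|_{C^{2+2\al}}^2$: that would require control of $u$ in $C^{3+2\al}$. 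This is precisely the quasilinear loss you flag in your last paragraph, and analytic-smoothing alone does not repair it, because the source term itself sits only in a lower-regularity space.

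The paper sidesteps both issues by running the Duhamel estimate entirely in $\ell^2$. The $C^{2+2\al}$ regularity from local well-posedness is used only to ensure $f(t,\mathbf w(t),A)\in\ell^2$ (since $\{j^2 c_j\}\in\ell^2$ when $u\in C^{2+2\al}$); then \eqref{eq-semigroup-decay-est} and \eqref{eq-nonlinear-est} close an $\ell^2$ bound of the form $\|\mathbf w(t)\|_{\ell^2}\le C\|\mathbf w(0)\|_{\ell^2}+C\de(A)\ve+CA$. This yields uniform smallness of $\|\overline\rho-\rho_*\|_{L^2}+|\mathcal R|+|\dot{\mathcal R}|$. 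Only \emph{after} obtaining this $L^2$ smallness does the paper invoke the interpolation lemma \cite[Lemma~D.1]{LW-vbas2022} together with the bootstrap of \cite[Theorem~4.1]{BV-SIMA2000} to upgrade to the $C^{2+2\al}$ bound \eqref{eq-bv-4.43}. So the interpolation is used to pass from $L^2$ smallness plus higher-H\"older boundedness to $C^{2+2\al}$ smallness, not to make a $C^{2+2\al}$ Duhamel inequality work directly.
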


\begin{proof}
First, note that the  existence of unique local-in-time solutions of \eqref{red-eqns} is established in \cite[Theorem 3.1]{BV-SIMA2000} without the smallness assumption on initial data.
To prove the solution exists globally in time, it suffices to prove the estimate \eqref{eq-bv-4.43}.
Certainly, \eqref{eq-bv-4.43} holds for short time if $\eta_0>0$ in \eqref{data-small-0} is sufficiently small
by the local-in-time wellposedness given by \cite[Theorem 3.1]{BV-SIMA2000}.
To prove \eqref{eq-bv-4.43} holds for all time, we look at the equivalent system \eqref{eq-carr-6.3.1-simple-form} in Proposition \ref{prop-equivalent-systems}. 
For sufficiently small ${\bf w} = (\mathcal R,\dot{\mathcal R}, c_1,c_2,\cdots)^\top$, one can rewrite \eqref{eq-carr-6.3.1-simple-form} as
\EQ{\label{eq-dyn-sys-1}
\dot{\bf w} = \mathcal L_-{\bf w} + f(t,{\bf w}, A),
}
where
\[
f(t,{\bf w}, A) := \bkt{(I-\mathcal N^1({\bf w}))^{-1} \mathcal L_- - \mathcal L_-}{\bf w} + (I-\mathcal N^1({\bf w}))^{-1} 
\bket{\mathcal N^0({\bf w}) - \frac{\psi(t;A)}{\rho_lR_*} 
\begin{bmatrix}
0\\
1\\
\frac{3\rho_*}{R_*}\ga\Ga_1\\
\frac{3\rho_*}{R_*}\ga\Ga_2\\
\vdots
\end{bmatrix}
}.
\]
Since $\rho(\cdot,t)\in C^{2+2\al}_r$ for short time, we have $\{j^2c_j(t)\}_{j=1}^\infty\in\ell^2$, and thus $f(t,{\bf w}(t),A)\in\ell^2$ for short time.

To derive an a priori estimate of the solution ${\bf w}$ in $\ell^2$, we note that 
$\mathcal N^1({\bf w}) = O(\norm{\bf w})$ and $\mathcal N^0({\bf w}) = O(\norm{\bf w}^2)$ so that
\EQ{\label{eq-nonlinear-est}
f(t,{\bf 0},A) = -\frac{\psi(t;A)}{\rho_lR_*} \begin{bmatrix}
0\\
1\\
\frac{3\rho_*}{R_*}\ga\Ga_1\\
\frac{3\rho_*}{R_*}\ga\Ga_2\\
\vdots
\end{bmatrix}
\quad \text{ and }\quad
\pd_{\bf w}f(t,{\bf 0},A)=\pd_{\bf w}\mathcal N^1({\bf 0}) \bket{ -\frac{\psi(t;A)}{\rho_lR_*} \begin{bmatrix}
0\\
1\\
\frac{3\rho_*}{R_*}\ga\Ga_1\\
\frac{3\rho_*}{R_*}\ga\Ga_2\\
\vdots
\end{bmatrix} }.
}
By Duhamel's principle, 
\[
{\bf w}(t) = e^{\mathcal L_-} {\bf w}(0) + \int_0^t e^{\mathcal L_-(t-s)} f(s,{\bf w}(s),A)\, ds.
\] 
Thus, by \eqref{eq-semigroup-decay-est}
\EQN{
\norm{{\bf w}(t)} 
&\le Ce^{-\be t} \norm{{\bf w}(0)} + \int_0^t Ce^{-\be(t-s)} \norm{f(s,{\bf w}(s),A)} ds,
}
where $\be>0$ is given in \eqref{eq-be-sharp-def}.
By \eqref{eq-nonlinear-est}, we deduce that if $\norm{\bf w}<\ve$
\EQN{
\norm{{\bf w}(t)} 
&\le C\norm{{\bf w}(0)} + \int_0^t Ce^{-\be(t-s)} \norm{f(s,{\bf w}(s),A) - f(s,{\bf 0},A)} ds
+ \int_0^t Ce^{-\be(t-s)}\norm{f(s,{\bf 0},A)} ds\\
&\le C\norm{{\bf w}(0)} + C\de(A)\ve \int_0^t e^{-\be(t-s)}\, ds + ACC_1 \int_0^t e^{-\be(t-s)}\, ds,
}
for some $C_1>0$ and continuous function $\de(A)$ with $\de(0)=0$.
Therefore, we obtain for some $C_2>0$ that
\EQ{\label{eq-w-global-small}
\norm{{\bf w}(t)} \le C\norm{{\bf w}(0)} + C\de(A)\ve C_2 + ACC_1C_2
}
which can be made arbitrarily small if $\norm{{\bf w}(0)}$ and $A$ are so.
This amounts to uniformly smallness of $\norm{\rho(\cdot,t) - \rho_*}_{L^2} + |R(t) - R_*| + |\dot R(t)|$ given that $\eta_0>0$ in \eqref{data-small-0} is sufficiently small.
With the smallness of $\norm{\rho(\cdot,t) - \rho_*}_{L^2}$, one can
use the interpolation lemma \cite[Lemma D.1]{LW-vbas2022} and the bootstrap argument performed in the proof of \cite[Theorem 4.1]{BV-SIMA2000} to conclude that \eqref{eq-bv-4.43} holds for all $t$ and complete the proof of Theorem \ref{thm-period-wellposed}.

\end{proof}

\subsection{Existence, uniqueness, and stability of the time-periodic solution}

Below, by constructing a Poincar\'e return map, we successfully prove the uniquely existence and stability of the time periodic solutions of the nonlinear bubble-fluid system  for the small-amplitude periodic forcing.

\begin{theorem}\label{cor-period-exituniq}
Consider the nonlinear free boundary problem \eqref{red-eqns} with $2\pi/\omega$- time periodic pressure field at infinity, \eqref{eq-periodic-forcing}.
\begin{enumerate}
\item Fix $M>0$ an arbitrary bubble mass.
 There exists $A_0(M)$ sufficiently small and positive such that
the system \eqref{red-eqns} has a unique $2\pi/\om$-periodic solution:
\[ \rho_{\rm per}[M,A](\cdot,t), R_{\rm per}[M,A](t),\quad t\in\R_+,\]
of bubble mass $M$, 
which depends smoothly on $M$ and $A$ for $0<M$ and $0<A<A_0(M)$.

\item The $2\pi/\om$-periodic solution $\rho_{\rm per}[M,A](t), R_{\rm per}[M,A](t)$ is (nonlinearly) exponentially stable with respect to mass preserving perturbations.
That is, there exists a constant $\eta>0$ such that if
\EQN{
\oldnorm{ \bke{\overline\rho_0(\cdot) - \overline\rho_{\rm per}[M,A](\cdot,0), R_0 - R_{\rm per}[M,A](0), \dot R_0 - \dot R_{\rm per}[M,A](0) } }_{L^2} \le \eta,\quad
\int_{B_{R_0}} \rho_0\, dx = M,
}
where 
$\overline\rho_0(y) = \rho_0(R_0y)$, $\overline\rho_{\rm per}(y,t) = \rho_{\rm per}(R_{\rm per}(t)y,t)$, 
$\oldnorm{\,\cdot\,}_{L^2}$ is defined in \eqref{eq-def-oldnorm},
and if $(\rho(\cdot,t), R(t), \dot R(t))$ is the global solution of \eqref{red-eqns} with initial data $(\rho_0, R_0, \dot R_0)\in C^{2+2\al}_r\times\R_+\times\R$, 
then
\EQN{
&\oldnorm{ \bke{\overline\rho(\cdot,t) - \overline\rho_{\rm per}[M,A](\cdot,t), R(t) - R_{\rm per}[M,A](t), \dot R(t) - \dot R_{\rm per}[M,A](t) } }_{L^2}\\
&\qquad\qquad\qquad\qquad\qquad\qquad\qquad\qquad\qquad\qquad\qquad\qquad\ \, = O\bke{e^{-(\be + o_A(1))}} \ \text{ as }\ t\to+\infty,
}
where 
$\overline\rho(y,t) = \rho(R(t)y,t)$, 
$\be>0$ is given in \eqref{eq-be-sharp-def} and $o_A(1)\to0$ as $A\to0$.

\item The manifold of periodic solutions:
 \EQ{\label{eq-manifold-equilib-per}
\mathcal M_{\rm per} = \bket{ (\rho_{\rm per}[M,A](t), R_{\rm per}[M,A](t), \dot R_{\rm per}[M,A](t)) : 0\le t\le 2\pi/\omega,\ 0<M<\infty,\ 0<A<A_0(M) },
} is (nonlinearly) asymptotically stable.
More precisely, there exists a constant $\eta_1>0$ such that
for any initial data $(\rho_0, R_0, \dot R_0)\in C^{2+2\al}_r\times\R_+\times\R$ with ${\rm dist}((\rho_0, R_0, \dot R_0), \mathcal M_{\rm per}) \le \eta_1$ we have
\[
{\rm dist}((\rho(\cdot,t), R(t), \dot R(t)), \mathcal M_{\rm per}) 
= O\bke{e^{-(\be + o_A(1))}} 
\ \text{ as }\ t\to+\infty,
\]
where 
\EQN{
{\rm dist}&( (\rho,R,\dot R), \mathcal{M}_{\rm per})\\
&\equiv \inf \bket{ \oldnorm{\left(\overline\rho - \overline\rho_{\rm per}, R-R_{\rm per}, \dot R-\dot R_{\rm per}\right)}_{L^2} : (\rho_{\rm per},R_{\rm per},\dot R_{\rm per})\in \mathcal{M}_{\rm per}}\\
&=\inf_{0<M<\infty}
\oldnorm{ \left(\overline\rho-\overline\rho_{\rm per}[M,A], R-R_{\rm per}[M,A], \dot R - \dot R_{\rm per}[M,A]\right)}_{L^2}.
}
\end{enumerate}
\end{theorem}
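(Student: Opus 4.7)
The plan is to construct the periodic solution as a fixed point of a Poincar\'e return map and to analyze stability by linearizing around the resulting time-periodic orbit, throughout exploiting the spectral/semigroup framework of Proposition~\ref{prop-spec-sec} and the global $C^{2+2\al}$ existence result of Theorem~\ref{thm-period-wellposed}. Fix $M>0$, set $T = 2\pi/\om$, and work in the infinite-dimensional dynamical system formulation \eqref{eq-carr-6.3.1-simple-form} with state $\mathbf{w} = (\mathcal R,\dot{\mathcal R}, c_1, c_2,\ldots)$. The nonlinear mass constraint (the analogue of \eqref{eq-u-mass-conserve} with the correction $G$ from \eqref{eq-def-G}) pins $z$ down in terms of $\mathbf{w}$, so initial data of mass $M$ near the unforced equilibrium correspond to small $\mathbf{w}(0)$; Theorem~\ref{thm-period-wellposed} then shows that the time-$T$ map $\Pi_A\colon \mathbf{w}(0)\mapsto \mathbf{w}(T)$ is well defined on a neighborhood of $\mathbf{0}$ for $0<A<A_0(M)$.

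For Part~(1), a $T$-periodic solution is precisely a fixed point of $\Pi_A$. At $A = 0$ the origin is a fixed point with linearization $D\Pi_0(\mathbf{0}) = e^{\mathcal L_- T}$; by Proposition~\ref{prop-spec-sec}, $\sigma(\mathcal L_-)\subset\{\Re\tau \le -\be\}$, so $1\notin\sigma(e^{\mathcal L_- T})$ and $I - e^{\mathcal L_- T}$ is invertible. Smooth dependence of $\Pi_A$ on $(\mathbf{w}(0),A)$ (inherited from the analytic semigroup generated by $\mathcal L_-$ together with the polynomial smoothness of $\mathcal N^0, \mathcal N^1$ after inverting $I - \mathcal N^1(\mathbf{w})$ for small $\mathbf{w}$) then allows the implicit function theorem to produce a unique fixed point $\mathbf{w}_{\rm per}[M,A](0)$, depending smoothly on $(M,A)$; flowing it out gives the claimed $T$-periodic solution $(\rho_{\rm per}[M,A],R_{\rm per}[M,A])$.

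For Part~(2), set $\mathbf{v}(t) = \mathbf{w}(t) - \mathbf{w}_{\rm per}[M,A](t)$ and subtract the two copies of \eqref{eq-carr-6.3.1-simple-form} to get
\[
\dot{\mathbf{v}} = \mathcal L_- \mathbf{v} + \mathcal B_A(t)\mathbf{v} + \mathcal Q_A(t,\mathbf{v}),
\]
where $\mathcal B_A(t)$ is $T$-periodic with $\sup_t\|\mathcal B_A(t)\| = o_A(1)$ as $A\to 0$ (since $\mathbf{w}_{\rm per}[M,A]\to \mathbf{0}$), and $\mathcal Q_A(t,\mathbf{v}) = O(\|\mathbf{v}\|_{\ell^2}^2)$. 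Duhamel's principle with the decay $\|e^{\mathcal L_- t}\|\le Ce^{-\be t}$, Gronwall absorption of $\mathcal B_A$, and the a priori smallness of $\mathbf{v}$ from Theorem~\ref{thm-period-wellposed} (applied to both $\mathbf{w}$ and $\mathbf{w}_{\rm per}$) yield $\|\mathbf{v}(t)\|_{\ell^2}\le C\|\mathbf{v}(0)\|_{\ell^2}\, e^{-(\be+o_A(1))t}$. Pulling this back through \eqref{uz-def} and \eqref{eq-perturbation} produces the stated $L^2$-type decay. Part~(3) reduces to Part~(2) by mass conservation (cf.\ \eqref{eq-conserve-mass}): the infimum over $M$ in ${\rm dist}(\cdot, \mathcal M_{\rm per})$ is asymptotically realized at the conserved mass of the initial data, so the distance collapses to the deviation from a single periodic orbit, to which Part~(2) applies directly.

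The main obstacle is justifying the differentiability of $\Pi_A$ in an appropriate Banach space, given the quasilinearity of the original free-boundary PDE. The dynamical-system recast \eqref{eq-carr-6.3.1-simple-form} sidesteps the derivative loss by absorbing the quasilinear coefficient into the invertible prefactor $I - \mathcal N^1(\mathbf{w})$ (valid for small $\mathbf{w}$), producing a semilinear problem in $\ell^2$ with sectorial generator $-\mathcal L_-$. Still, one must verify that the Dirichlet-coefficient norm on $\mathbf{w}$ is compatible with the $C^{2+2\al}$ bounds of Theorem~\ref{thm-period-wellposed} (via the interpolation lemma \cite[Lemma~D.1]{LW-vbas2022}), and that $\mathbf{w}_{\rm per}[M,A]$ is indeed small enough in the relevant sequence-space norm --- uniformly in $t\in[0,T]$ --- to force $\mathcal B_A = o_A(1)$ and to keep the quadratic term $\mathcal Q_A$ subdominant throughout the global-in-time argument.
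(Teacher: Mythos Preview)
Your proposal is correct and follows essentially the same route as the paper: a Poincar\'e-map fixed point via the implicit function theorem (using invertibility of $I-e^{\mathcal L_-T}$ from Proposition~\ref{prop-spec-sec}) for Part~(1), Duhamel plus Gronwall on $\mathbf v=\mathbf w-\mathbf w_{\rm per}$ with the $o_A(1)$ smallness of $\mathbf w_{\rm per}$ (supplied by \eqref{eq-w-global-small}) for Part~(2), and reduction to the fixed-mass orbit for Part~(3). The only cosmetic difference is that the paper bounds $f(s,\mathbf w_{\rm per}+\mathbf v,A)-f(s,\mathbf w_{\rm per},A)$ directly by a single Lipschitz factor $\delta(\mathbf w_{\rm per},A)\|\mathbf v\|$ rather than splitting into your $\mathcal B_A\mathbf v+\mathcal Q_A$, which lets Gronwall apply in one step without a separate bootstrap for the quadratic piece.
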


\begin{remark}
In the same spirit as in Remark \ref{rmk-higher-norm-conv}, 
with the aid of the uniform boundedness \eqref{eq-bv-4.43} in $C^{2+2\al}$ (see Theorem \ref{thm-period-wellposed}) 
one can bootstrap the $L^2$-exponential decay in (2) and (3) of Theorem \ref{cor-period-exituniq} to $C^{2+2\al}$-exponential decay with a cost of decay exponent.
More precisely, one has
\EQ{
&\norm{\overline\rho(\cdot,t) - \overline\rho_{\rm per}(t)}_{C^{2+2\al}_y(B_1)},\ 
|\ddot R(t) - \ddot R_{\rm per}(t)|,\ 
|\dddot R(t) - \dddot R_{\rm per}(t)|
 = O\bke{ e^{-\de(\be + o_A(1))t} }\ \text{ as }\ t\to\infty,
}
for some $\de<1$, where $\be>0$ is as in \eqref{eq-be-sharp-def}.
\end{remark}

\begin{remark}
All the conclusions on $(\rho(x,t),R(t))$ in Theorem \ref{cor-period-exituniq} transfer --via the explicit expressions relating gas to fluid variables in \cite[Proposition 5.2]{LW-vbas2022}-- to statements about the full bubble-fluid system \cite[(3.1)-(3.4)]{LW-vbas2022}, with $p_\infty(t) = p_{\infty,*} + \psi(t;A)$ in \cite[(3.4)]{LW-vbas2022}, on other state variables.
In particular, the system \cite[(3.1)-(3.4)]{LW-vbas2022} admits a unique exponentially stable time-periodic solution $({\bf v}_{l,{\rm per}}(x,t), p_{l,{\rm per}}(x,t), B_{R_{\rm per}(t)}, p_{g,{\rm per}}(t), {\bf v}_{g,{\rm per}}(x,t), \rho_{g,{\rm per}}(x,t),\cdots)$ respect to small mass-preserving spherically symmetric perturbations.
\end{remark}

\begin{remark}
A more detailed picture of analysis near the manifold of periodic solutions $\mathcal M_{\rm per}$ can be obtained by a similar center manifold analysis as in \cite[Section 9]{LW-vbas2022}. 
\end{remark}

\begin{proof}[Proof of Theorem \ref{cor-period-exituniq}]
For a fixed bubble mass $M>0$, we consider the equivalent system \eqref{red-eqns-mass} with initial bubble mass $M$, $\int_{B_{R(0)}}\rho(x,0)\,dx = M$.
The  existence of a unique global-in-time solution to the initial value problem for \eqref{red-eqns-mass} in the $C^{2+2\al}$ framework is derived in Theorem \ref{thm-period-wellposed}.
Note that \eqref{red-eqns-mass} is equivalent to the infinite-dimensional dynamical system \eqref{eq-carr-6.3.1-simple-form} by Proposition \ref{prop-equivalent-systems}. 
We write it in the form:
\EQ{\label{eq-dyn-sys-1-same}
\dot{\bf w} = \mathcal L_- {\bf w} + f(t,{\bf w},A);
}
see  \eqref{eq-dyn-sys-1}.
Denote ${\bf w}(t;{\bf w}_0,A)$ the unique solution of \eqref{eq-dyn-sys-1-same} with initial data ${\bf w}(0;{\bf w}_0,A)={\bf w}_0$.
Since $f(t,{\bf w},A)$ is $2\pi/\om$ periodic in $t$, the mapping $t\mapsto {\bf w}(t;{\bf w}_0,A)$ is $2\pi/\om-$ periodic if  and only if  ${\bf w}_0$ is a fixed point of the Poincar\'e map ${\bf w}_0\mapsto {\bf w}(2\pi/\om;{\bf w}_0,A)$.
By Duhamel's principle, 
\[
{\bf w}(t;{\bf w}_0,A) = e^{\mathcal L_- t} {\bf w}_0 + \int_0^t e^{\mathcal L_-(t-s)} f(s,{\bf w}(s;{\bf w}_0,A),A)\, ds.
\]
Hence, ${\bf w}_0 = {\bf w}(2\pi/\om;{\bf w}_0,A)$ if and only if
\begin{equation}\label{eq:FP}
{\bf w}_0 = \bke{ I - e^{\mathcal L_- \frac{2\pi}\om }}^{-1} \int_0^{\frac{2\pi}\om} e^{\mathcal L_-\bke{\frac{2\pi}\om - s}} f(s,{\bf w}(s;{\bf w}_0,A),A)\, ds.
\end{equation}
We solve \eqref{eq:FP} for $ {\bf w}_0 = {\bf g}(A)$ by applying the implicit function theorem to the mapping:
\[
\Phi({\bf w}_0,A) = {\bf w}_0 - \bke{ I - e^{\mathcal L_- \frac{2\pi}\om }}^{-1} \int_0^{\frac{2\pi}\om} e^{\mathcal L_-\bke{\frac{2\pi}\om - s}} f(s,{\bf w}(s;{\bf w}_0,A),A)\, ds.
\]
It is easy to see that $\Phi({\bf 0},0) = {\bf 0}$ since $f(t,{\bf 0},0) = {\bf 0}$ by \eqref{eq-nonlinear-est}.
We now compute $\pd_{{\bf w}_0}\Phi({\bf 0},0)$.
Since that ${\bf w}(t,{\bf 0},0) = {\bf 0}$ and $\pd_{\bf w}f(t,{\bf 0},0) = {\bf O}$ by \eqref{eq-nonlinear-est},
and since $e^{\mathcal L_- t}$ is an analytic semi-group (Proposition \ref{prop-spec-sec}), we have
\[
\pd_{{\bf w}_0}\Phi({\bf 0},0) = I - \bke{ I - e^{\mathcal L_- \frac{2\pi}\om }}^{-1} \int_0^{\frac{2\pi}\om} e^{\mathcal L_-\bke{\frac{2\pi}\om - s}} \pd_{\bf w}f(s,{\bf w}(s;{\bf 0},0),0) \pd_{{\bf w}_0}{\bf w}(s;{\bf 0},0)\, ds = I,
\]
which is an isomorphism from $\ell^2$ to $\ell^2$.
Therefore, by the implicit function theorem there is a Fr\'echet differentiable function ${\bf g} = {\bf g}(A)$ and a neighborhood of $A=0$ in which $\Phi({\bf g}(A),A) = {\bf 0}$.
Let ${\bf w}_{\rm per}(t;A)$ be the solution of \eqref{eq-dyn-sys-1-same} with the initial data ${\bf w}_{\rm per}(0;A) = {\bf g}(A)$.
Then ${\bf w}_{\rm per}(t;A)$ is $2\pi/\om$- time periodic.
This proves (1) the existence of $2\pi/\om$- time periodic solution $(\rho_{\rm per}, R_{\rm per}, \dot R_{\rm per})$ of the free boundary problem \eqref{red-eqns} for a given bubble mass $M$.

We now prove the asymptotic stability of this solution. 
Let $(\rho, R)\in C^{2+2\al}_r\times\R_+$ be the unique solution of \eqref{red-eqns} with the initial data $(\rho_0, R_0, \dot R_0)\in C^{2+2\al}_r\times\R_+\times\R$ chosen to be close to the data for the periodic solution $(\rho_{\rm per}, R_{\rm per}, \dot R_{\rm per})$.
Correspondingly, we have  ${\bf w}$ the solution of \eqref{eq-dyn-sys-1-same} with  initial data ${\bf w}_0$, (corresponding to $(\rho_0, R_0, \dot R_0)$) that is close to the data for the periodic solution ${\bf w}_{\rm per}$ 
(arising from the data $(\rho_{\rm per}, R_{\rm per}, \dot R_{\rm per})$).
Let ${\bf v} := {\bf w} - {\bf w}_{\rm per}$.
Then ${\bf v}$ satisfies
\[
\dot{\bf v} = \mathcal L_- {\bf v} + f(t,{\bf w}_{\rm per}+{\bf v},A) - f(t,{\bf w}_{\rm per},A).
\]
By Duhamel's principle,
\EQ{\label{eq-duhamel}
{\bf v}(t) = e^{\mathcal L_- t} {\bf v}_0 + \int_0^t e^{\mathcal L_-(t-s)} \bkt{ f(s,{\bf w}_{\rm per}(s)+{\bf v}(s),A) - f(s,{\bf w}_{\rm per}(s),A) } ds.
}
Note that
\[
\norm{ f(s,{\bf w}_{\rm per}(s)+{\bf v}(s),A) - f(s,{\bf w}_{\rm per}(s),A) } 
\le \norm{\pd_{\bf w} f(s, {\bf w}_{\rm per}(s), A)} \norm{\bf v(s)}.
\]
The nonlinear estimate \eqref{eq-nonlinear-est} implies $\norm{\pd_{\bf w} f(s, {\bf w}_{\rm per}(s), A)} \le \de({\bf w}_{\rm per}(s),A)$ for some continuous function $\de$ with $\de(0,0)=0$.
So 
\EQ{\label{eq-est-duhamel}
\norm{ f(s,{\bf w}_{\rm per}(s)+{\bf v}(s),A) - f(s,{\bf w}_{\rm per}(s),A) } 
\le \de({\bf w}_{\rm per}(s),A) \norm{\bf v(s)}.
}
Given that ${\bf g}(A) = {\bf w}_{\rm per}(0;A)$ is continuous in $A$ and that ${\bf w}_{\rm per}(0;0) = {\bf 0}$, we can make $\norm{{\bf w}_{\rm per}(0;A)}$ arbitrarily small by choosing sufficiently small $A$.
On the other hand, according to \eqref{eq-w-global-small} in the proof of Theorem \ref{thm-period-wellposed}, $\sup_{t\ge0}\norm{{\bf w}_{\rm per}(t;A)}$ can be made arbitrarily small if the data $\norm{{\bf w}_{\rm per}(0;A)}$ and the amplitude $A$ are small enough.
Thus, $\sup_{t\ge0}\norm{{\bf w}_{\rm per}(t;A)} = o_1(A)$ as $A\to0$.

Using the above result with \eqref{eq-est-duhamel} in the Duhamel's formula \eqref{eq-duhamel}, we have that
\EQN{\label{eq:v-est}
\norm{{\bf v}(t)} 
\le Ce^{-\be t} \norm{{\bf v}(0)} + C\de_1(A)\int_0^t e^{-\be(t-s)} \norm{{\bf v}(s)} ds,
}
for some continuous function $\de_1$ with $\de_1(0)=0$, where $\be>0$ is given in \eqref{eq-be-sharp-def}.
%
%
Applying Gronwall's lemma, we obtain
\[
\norm{{\bf v}(t)} \le C\norm{{\bf v}(0)} e^{-\bke{\be - C\de_1(A)} t}.
\]
Consequently, if $A$ is sufficiently small, ${\bf v}(t)$ decays exponentially, and thus ${\bf w}_{\rm per}(t;A)$ is exponentially asymptotically stable.
This amounts to the exponentially asymptotic stability of the time periodic solution $(\rho_{\rm per}[M,A], R_{\rm per}[M,A], \dot R_{\rm per}[M,A])\in \mathcal M_{\rm per}$ of the free boundary problem \eqref{red-eqns} with respect to mass-preserving perturbations, completing the proof of (2).

For a fixed sufficiently small amplitude $A$, the above result yields a continuous correspondence between the bubble mass $M$ and the initial data $(\rho_{\rm per}(\cdot,0), R_{\rm per}(0), \dot R_{\rm per}(0))$ of the unique periodic solution of \eqref{red-eqns}.
The stability relative to general (non mass-preserving) perturbations then follows from the continuous dependence of the solution of \eqref{red-eqns} on the initial data (See the proof in \cite[Theorem 3.1]{BV-SIMA2000}).
This proves (3) the asymptotic stability of the manifold $\mathcal M_{\rm per}$.
\end{proof}

\appendix

\section{
Estimates of the exponential rate, $\beta_{\rm thermal}$, and comparison with previous results}
\label{sec-compare-prosperetti}

%


%
Let $\beta_{\rm thermal}>0$ denote the thermal dissipation rate constant associated with the inviscid linearized dynamics of infinitesimal perturbations, {\it i.e.}  perturbation solutions
decay like $\exp(-\beta_{\rm thermal} t)$.
We compare our bounds with previous results for different regimes of the {\it thermal diffusivity parameter} \[ \chi = \frac{\kappa}{c_p\rho_*} = \bar\ka R_*^2>0.\]
In the inviscid case,  $\mu_l=0$, the decay rate \eqref{eq-be-sharp-def} in Theorem \ref{thm-exp-decay-LT} yields the following lower bound on the thermal dissipation rate
\EQ{\label{eq-be-thermal-est}
&\be_{\rm thermal} \ge \min\Bigg\{ \bke{1 - \sqrt{\dfrac{\vartheta(\ga)}{ \frac{p_{\infty,*}R_*}{2p_{\infty,*}R_*+6\si} + \vartheta(\ga) }}} \frac{\pi^2\chi}{R_*^2},\  \sqrt{ \e \frac{2p_*}{\rho_lR_*^2} },\\
&\qquad\qquad\qquad\qquad \frac{(1-\ve)^2\vartheta(\ga)p_*}{\pi^4\rho_l\chi} \bke{\frac{4\pi^4}{90} + (1-\ve)^{3/4} O\bke{\bke{\frac{R_*^2}{\pi^2\chi}\sqrt{ \frac{2p_*}{\rho_lR_*^2 }}}^{3/2}}} \Bigg\}.
}
Here, $\vartheta(\gamma)\equiv 1-\gamma^{-1}$ and the parameter $\ve\in(0,1)$ can be chosen arbitrarily.

The lower bound \eqref{eq-be-thermal-est} enables us to address two parameter regimes of interest:\\
(i) {\it the nearly isothermal regime}:  $\chi$ large, corresponding to rapid thermal diffusion, and\\
(ii) {\it the nearly adiabatic regime}:  $\chi$ small, corresponding to slow thermal diffusion.

The third expression under the minimum in \eqref{eq-be-thermal-est} dominates for $\chi$ large,
and the first expression dominates for $\chi$ small.

Specifically, fix $\ve\in(0,1)$. Then, for $\chi> \chi_0(\ve)$ sufficiently large, we have $\frac{4(1-\ve)^2\vartheta(\ga)p_*}{90\rho_l\chi} \le \sqrt{\e \frac{2p_*}{\rho_lR_*^2}}$.
Thus,
\eqref{eq-be-thermal-est} yields 
 \[
 \beta_{\rm thermal} >  \frac{4(1-\ve)^2\vartheta(\ga)p_*}{90\rho_l\chi}\qquad \text{ for all $\chi>\chi_0(\ve)$.}
 \]
Since $\ve\in(0,1)$ can be arbitrarily close to zero, we formally have 
 \[
 \beta_{\rm thermal} >  \frac{4\vartheta(\ga)p_*}{90\rho_l\chi}\qquad \text{ for large $\chi$.}
 \]
For $\chi$ small, \eqref{eq-be-thermal-est} implies
 \[
 \beta_{\rm thermal} \ge \bke{1 - \sqrt{\dfrac{\vartheta(\gamma)}{\frac{p_{\infty,*}R_*}{2p_{*,\infty}R_*+6\si} + \vartheta(\gamma)}}} \dfrac{\pi^2\chi}{R_*^2}.
 \]
This leads to expressions  for $\beta_{\rm isothermal}$ and $\beta_{\rm adiabatic}$, which are lower bounds
for the exponential decay rate in the isothermal, respectively adiabatic parameter regimes.
\EQ{\label{eq-thermal-damping-sum}
 \beta_{\rm thermal} >
 \begin{cases} 
 \dfrac{4 \vartheta(\gamma) p_*}{90\rho_l\chi} :=  \beta_{\rm isothermal}, \quad &\textrm{for $\chi$ large (isothermal)},\\
 \bke{1 - \sqrt{\dfrac{\vartheta(\gamma)}{\frac{p_{\infty,*}R_*}{2p_{*,\infty}R_*+6\si} + \vartheta(\gamma)}}} \dfrac{\pi^2\chi}{R_*^2} =: \beta_{\rm adiabatic},\quad &\textrm{for $\chi$ small (adiabatic)}.
 \end{cases}
 }
Note that \eqref{eq-thermal-damping-sum} gives lower bound estimates.

\subsection{Comparison of \eqref{eq-thermal-damping-sum} with Prosperetti \cite{Prosperetti-JFM1991}} 
 In the nearly isothermal case ($\chi$ large), Prosperetti deduced the following thermal dissipation rate via Laplace transforms, the linearized constant mass assumption on the initial data \cite[(3.15)]{Prosperetti-JFM1991} and formal approximations:
\EQ{\label{eq-thermal-damping-P91}
\be_{\rm thermal, P91} \approx \frac{\vartheta(\ga)p_*}{10\rho_l\chi},\qquad\text{ for $\chi$ large.}
}
The approximate rate $\be_{\rm thermal, P91}$ is consistent with our lower bound estimate \eqref{eq-thermal-damping-sum}.
In fact, it is $\be_{\rm thermal, P91}=9/4\times \be_{\rm isothermal}$, where $\be_{\rm isothermal}$ is displayed in \eqref{eq-thermal-damping-sum}. 
We believe that the approximations performed in \cite[(3.8)-(3.13)]{Prosperetti-JFM1991}, as well as \cite[(4.2), (4.3)]{Prosperetti-JFM1991}, require further justification due to the subtleties of  two variables, $D=\chi/(\om R_*^2)$ and time $t$, being taken  to infinity.
(Note that a factor $D$ is missing on the right hand side of \cite[(3.11)]{Prosperetti-JFM1991})
For example, the approximation to the delta function displayed above \cite[(3.12)]{Prosperetti-JFM1991} is derived from \cite[(3.11)]{Prosperetti-JFM1991} by fixing a finite time $t$ and then sending $D\to\infty$.
As for the expansion \cite[(4.2), (4.3)]{Prosperetti-JFM1991}, the uniform convergence of the series for all $t>0$ needs to be justified.
Without further justification, we believe it is possible that the approximated equation \cite[(3.13)]{Prosperetti-JFM1991} and \cite[(4.9)]{Prosperetti-JFM1991} may only valid up to any fixed finite time, and thus  may not give the actually exponential time-decay rate as $t\to\infty$.
In contrast, our lower bound, \eqref{eq-thermal-damping-sum}, for $\be_{\rm thermal}$ establishes a decay rate of at least of order $\exp(-\be_{\rm thermal} t)$, which is valid for all time $t>0$.

In the nearly adiabatic case ($\chi$ small) on the other hand, Prosperetti presented numerical results of two approximations: implicit nearly adiabatic approximation \cite[(5.8)]{Prosperetti-JFM1991} and
explicit nearly adiabatic approximation \cite[(5.10)]{Prosperetti-JFM1991} using cubic splines with iteration until convergence \cite[Section 5]{Prosperetti-JFM1991}.
Our $\be_{\rm adiabatic}$ in \eqref{eq-thermal-damping-sum} is the first analytic and quantitative estimate of $\be_{\rm thermal}$ in the nearly adiabatic case.

 \subsection{Decay rate to periodic states}\label{sec-compare-prosperetti-per}

When a small-amplitude time-periodic sound field is exerted,
 it follows from Theorem \ref{cor-period-exituniq} that the thermal damping rate $\beta_{\rm thermal, per}$ (setting $\mu_l=0$) for periodically forced linearized model is $\beta_{\rm thermal, per} =  \beta_{\rm thermal} + o_A(1)$ as $A\to0$.
Thus, from \eqref{eq-thermal-damping-sum}
\EQ{\label{eq-thermal-damping-sum-per}
 \beta_{\rm thermal, per} >
 \begin{cases} 
 \dfrac{4\vartheta(\gamma) p_*}{90\rho_l\chi} = : \beta_{\rm isothermal, per} , \quad &\textrm{for $\chi$ large (isothermal)},\\
 \bke{1 - \sqrt{\dfrac{\vartheta(\gamma)}{\frac{p_{\infty,*}R_*}{2p_{*,\infty}R_*+6\si} + \vartheta(\gamma)}}} \dfrac{\pi^2\chi}{R_*^2} = : \beta_{\rm adiabatic, per},\quad &\textrm{for $\chi$ small (adiabatic)}.
 \end{cases}
 }

We now compare \eqref{eq-thermal-damping-sum-per} with the results for the equivalent linearized forced problem \cite[(3.7)]{Prosperetti-JFM1991} by Prosperetti. 

Upon the approximation \cite[(3.20)]{Prosperetti-JFM1991} for $t$ sufficiently large from Laplace transforms and the assumption $X''' = -X'$, $X=(R/R_*)-1$, 
Prosperetti derived the approximated thermal dissipation \cite[(3.27)]{Prosperetti-JFM1991}
\[
\be_{\rm th, per, P91} 
= \frac{p_*}{2\rho_l\om R_*^2} \Im \wt F\bke{\frac{i}D},\ \text{ where $\wt F$ is defined in \cite[(3.5)]{Prosperetti-JFM1991}.}
\]
Prosperetti argued  that the approximated thermal dissipation rate $\be_{\rm th, per, P91}$ has the asymptotic limits \cite[(3.30), (3.31)]{Prosperetti-JFM1991} for the nearly isothermal ($\chi$ large) and the nearly adiabatic ($\chi$ small) cases, respectively, in terms of the variable $\eta = R_*(2\om/\chi)^{1/2}$. 

For isothermal case, \cite[(3.30)]{Prosperetti-JFM1991} yields 
\EQ{\label{eq-thermal-damping-per-isoth-P91}
\be_{\rm th, per, P91} 
\sim \frac{p_*}{2\rho_l\om R_*^2}\, \frac{\vartheta(\ga)}{10}\, \eta^2
= \frac{\vartheta(\ga)p_*}{10\rho_l\chi}
=: \be_{\rm isothermal, per, P91} \quad\text{ for large $\chi$ (isothermal),}
}
which is exactly the same as $\be_{\rm th, P91}$ in \eqref{eq-thermal-damping-P91} for the unforced one.
The approximated damping rate $\be_{\rm isothermal, per, P91}$ satisfies our lower bound estimate \eqref{eq-thermal-damping-sum-per} and is $9/4$-times greater than our lower bound $\beta_{\rm isothermal, per}$.

For adiabatic case, \cite[(3.27), (3.31)]{Prosperetti-JFM1991} yields
\EQ{\label{eq-thermal-damping-per-adiab-P91}
\be_{\rm th, per, P91} 
\sim \frac{p_*}{2\rho_l\om R_*^2}\, 9\ga\, \frac{\ga-1}\eta
= \frac{9\ga(\ga-1)p_*}{2^{3/2}\rho_l\om^{3/2}R_*^3}\, \chi^{1/2}
=: \be_{\rm adiabatic, per, P91}\quad\text{ for small $\chi$ (adiabatic).}
}
In terms of the order in the thermal diffusivity $\chi$, $\be_{\rm adiabatic, per, P91} = O(\chi^{1/2})$ in \eqref{eq-thermal-damping-per-adiab-P91} is larger than $\be_{\rm thermal, per} = O(\chi)$ in \eqref{eq-thermal-damping-sum-per}.
Thus, $\be_{\rm adiabatic, per, P91}$ satisfies our lower bound \eqref{eq-thermal-damping-sum-per}.

\section{Improved estimate of the linear exponential decay rate $\be$}\label{beta-est}

In this appendix, we investigate the location of the roots of the meromorphic function $Q(\tau)$ defined in \eqref{eq-Q-def}.
The following lemma is an improvement of \cite[Lemma E.1]{LW-vbas2022}.
\begin{lemma}\label{lem-negative-upper-bound}
There exists a negative upper bound for the real parts of the roots of the meromorphic function $Q(\tau)$ in \eqref{eq-Q-def}.
Specifically, there exists $\be>0$ such that $\xi<-\be$ for all roots $\tau = \xi + i\eta$ of $Q(\tau)$.
The constant $\be$ can be chosen as
\EQ{\label{eq-be-sharp-def-appendix}
\be &= \min\Bigg\{ \bke{1 - \sqrt{\dfrac{\vartheta(\ga)}{ \frac{p_{\infty,*}R_*}{2p_{\infty,*}R_*+6\si} + \vartheta(\ga) }}}\pi^2\bar\ka,\  \sqrt{ \e \frac{2p_*}{\rho_lR_*^2} },\\
&\qquad \frac{2\mu_l}{\rho_lR_*^2} + \mathbbm{1}_{\De\le0}\, \frac{(1-\ve)^2\vartheta(\ga)p_*}{\pi^4\bar\ka \rho_lR_*^2} \bke{\frac{4\pi^4}{90} + (1-\ve)^{3/4} O\bke{\bke{\frac1{\pi^2\bar\ka}\sqrt{ \frac{2p_*}{\rho_lR_*^2 }}}^{3/2}}} - \mathbbm{1}_{\De>0}\, \frac{\sqrt{\De}}{2\rho_lR_*} \Bigg\},
}
where $\vartheta(\ga)=1-\ga^{-1}$, $\ve\in(0,1)$ is arbitrary, and $\De:= \bke{ \frac{4\mu_l}{R_*} }^2 - 8\rho_lp_*$.
\end{lemma}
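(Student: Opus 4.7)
My plan is to argue by contradiction: suppose $\tau_0 = \xi_0 + i\eta_0$ is a zero of $Q(\tau)$ with $\xi_0 \ge -\be$, and show that at least one of the three expressions inside the $\min$ in \eqref{eq-be-sharp-def-appendix} is strictly less than $-\xi_0$, contradicting $\xi_0\ge -\be$.

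As a preliminary step, I would rewrite $Q(\tau)=0$ in a form where the pole structure is transparent. Using the partial-fraction identity
\begin{equation*}
\frac{\tau}{j^2(\pi^2\bar\ka j^2+\tau)} = \frac{1}{j^2} - \frac{\pi^2\bar\ka}{\pi^2\bar\ka j^2+\tau}
\end{equation*}
and $\sum_{j\ge 1} 1/j^2 = \pi^2/6$, the equation $Q(\tau) = 0$ becomes
\begin{equation*}
\Phi(\tau)\bke{\rho_lR_*\tau^2 + \tfrac{4\mu_l}{R_*}\tau - \tfrac{2\si}{R_*^2}} = -\frac{p_*}{R_*},\quad
\Phi(\tau) := \frac{1}{3\ga} + \frac{2(\ga-1)\bar\ka}{\ga}\sum_{j=1}^\infty \frac{1}{\pi^2\bar\ka j^2 + \tau}.
\end{equation*}
This isolates the mechanical factor (inertia, surface tension and viscosity) from the purely thermal factor $\Phi$, which has simple poles at the Dirichlet eigenvalues $\tau=-\pi^2\bar\ka j^2$, $j\ge 1$, and is analytic elsewhere.

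Next, I would partition the half-plane $\{\xi_0\ge-\be\}$ into three regimes corresponding to the three candidate bounds in \eqref{eq-be-sharp-def-appendix}. In Case~(a), proximity to the first thermal pole: if $\tau_0$ is close to $-\pi^2\bar\ka$, the $j=1$ term dominates $\Phi(\tau_0)$, and the identity $|\Phi(\tau_0)|\cdot|\mathrm{quadratic}(\tau_0)|=p_*/R_*$ forces $|\xi_0+\pi^2\bar\ka|$ to exceed the first threshold in \eqref{eq-be-sharp-def-appendix}. In Case~(b), large $|\eta_0|$: separate real and imaginary parts of the equation; as $|\tau_0|\to\infty$, $\Phi(\tau_0)\to 1/(3\ga)$, so the modulus of the quadratic must balance $3\ga p_*/R_*$, forcing $\rho_lR_*\eta_0^2\lesssim 3\ga p_*/R_*$ up to errors absorbed by the free parameter $\ve\in(0,1)$; this yields the second bound $\sqrt{\ve\cdot 2p_*/(\rho_lR_*^2)}$. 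In Case~(c), the intermediate regime (small $|\eta_0|$, $\xi_0$ bounded away from the thermal pole), every summand of $\Phi(\tau_0)$ contributes; here one derives a sharp lower bound on $\Re\Phi(\tau_0)$ using the exact Euler identity $\sum_{j\ge 2}1/j^4=\pi^4/90-1$---the source of the $4\pi^4/90$ constant---and a Taylor expansion in a small parameter producing the $O((\cdot)^{3/2})$ correction. Combining with an algebraic analysis of the mechanical discriminant $\De=(4\mu_l/R_*)^2-8\rho_lp_*$, which distinguishes complex-conjugate (underdamped) from real (overdamped) roots and produces the $\mathbbm{1}_{\De\le 0}$ and $\mathbbm{1}_{\De>0}$ contributions, gives the third bound.

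The main obstacle will be Case~(c), which requires the delicate simultaneous handling of the exact thermal tail sum, the cubic remainder, and the sign of $\De$. This is also where the improvement over \cite[Lemma~E.1]{LW-vbas2022} lies: the earlier proof used a cruder geometric bound on the tail in place of the Euler sum and did not track the cubic correction, so a refined Case~(c) with these finer estimates yields the sharper rate constant claimed in \eqref{eq-be-sharp-def-appendix}. Once all three cases are established, $-\xi_0$ exceeds at least one of the three quantities and hence their minimum $\be$, contradicting $\xi_0\ge-\be$.
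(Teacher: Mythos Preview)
Your high-level contradiction strategy and the preliminary rewriting of $Q(\tau)=0$ via partial fractions match the paper. However, your case decomposition misidentifies which regimes produce which of the three terms in \eqref{eq-be-sharp-def-appendix}, and the mechanisms you sketch inside each case do not deliver the stated bounds.

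The paper splits according to the size of $|\eta|$ relative to the threshold $(1-\ve)\cdot 2p_*/(\rho_lR_*^2)$, together with the separate case $\eta=0$. For $0<\eta^2\le(1-\ve)\cdot 2p_*/(\rho_lR_*^2)$ one assumes \emph{simultaneously} $\xi\ge-\theta\pi^2\bar\ka$ and $\xi^2+\eta^2\le 2p_*/(\rho_lR_*^2)$ (the latter to bound the mechanical factor from below), then eliminates between the real and imaginary parts of $Q=0$ to force $\theta>1-\sqrt{\vartheta(\ga)/(\cdots)}$. Both the first \emph{and} second terms in the $\min$ arise here, from the failure of these two assumptions respectively. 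The $j=1$ term never dominates: one bounds the full series via $\pi^2\bar\ka j^2+\xi\ge(1-\theta)\pi^2\bar\ka j^2$ uniformly in $j$, and it is $\sum j^{-2}=\pi^2/6$ that enters. Your Case~(b), by contrast, bounds $\eta_0$ rather than $\xi_0$; the observation $\rho_lR_*\eta_0^2\lesssim 3\ga p_*/R_*$ gives no lower bound on $-\xi_0$ at all.

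The large-$|\eta|$ regime is where the third term with $\pi^4/90$ comes from. The imaginary-part identity gives an equation of the form $4\pi\tfrac{\rho_*}{R_*}\Rg T_\infty\,H_2=H_1(\Xi_2^2+H_2^2)$, and one uses $\Xi_2^2+H_2^2=|\rho_lR_*\tau^2+\cdots|^2>\rho_l^2R_*^2\eta^4$ together with the monotonicity of $a^2/(\pi^4\bar\ka^2 j^4+a)$ in $a=\eta^2$ to extract a strict upper bound on $2\rho_lR_*\xi+4\mu_l/R_*$. The constant $\pi^4/90$ comes from the full sum $\sum_{j\ge1}1/(j^4+B^2)=\pi^4/90+O(B^{3/2})$ as $B\to0$ (here $B=(1/\pi^2\bar\ka)\sqrt{(1-\ve)\cdot 2p_*/(\rho_lR_*^2)}$), not from the tail $\sum_{j\ge2}1/j^4=\pi^4/90-1$ as you wrote. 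Finally, the discriminant $\De$ enters only in the $\eta=0$ case, where the auxiliary assumption controlling the mechanical factor can fail when $\De>0$; this is handled separately and produces the $-\mathbbm{1}_{\De>0}\sqrt{\De}/(2\rho_lR_*)$ correction. The improvement over \cite[Lemma~E.1]{LW-vbas2022} is not a refined tail estimate but simply allowing arbitrary $\ve\in(0,1)$ in place of the fixed choice $\ve=1/2$.
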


\cite[Lemma E.1]{LW-vbas2022} is a special case Lemma \ref{lem-negative-upper-bound} for $\ve=1/2$.

\begin{proof}[Proof of Lemma \ref{lem-negative-upper-bound}]
We first simplify $Q(\tau)$ in \eqref{eq-Q-def}.
Since
\[
\sum_{j=1}^\infty \frac{\tau}{j^2\bke{\pi^2\bar\ka j^2 + \tau}} = \sum_{j=1}^\infty \bke{\frac1{j^2} - \frac{\pi^2\bar\ka}{\pi^2\bar\ka j^2 + \tau} }
= \frac{\pi^2}6 - \sum_{j=1}^\infty \frac{\pi^2\bar\ka}{\pi^2\bar\ka j^2 + \tau},
\]
\EQ{\label{eq-Q-simplified}
Q(\tau) = \frac1{\Rg T_\infty} \bke{\frac{4\pi}{3\ga} + \frac{8(\ga-1)}{\pi\ga} \sum_{j=1}^\infty \frac{\pi^2\bar\ka}{\pi^2\bar\ka j^2 + \tau} } \bke{\rho_lR_*\tau^2 + \frac{4\mu_l}{R_*}\, \tau - \frac{2\si}{R_*^2}} + 4\pi\,\frac{\rho_*}{R_*}.
}
Let $\tau = \xi + i\eta$ be a root of $Q(\tau)$, i.e., $Q(\tau)=0$. 

Plugging $\tau = \xi + i\eta$, $\xi\in\R,\eta\in\R$, into \eqref{eq-Q-simplified}, we have
\[
Q(\xi + i\eta) = \frac1{\Rg T_\infty} \bke{\Xi_1 + iH_1} \bke{\Xi_2 + iH_2} + 4\pi\,\frac{\rho_*}{R_*},
\]
where
\EQ{\label{eq-Xi-Eta}
\Xi_1 &= \frac{4\pi}{3\ga} + \frac{8(\ga-1)}{\pi\ga}\sum_{j=1}^\infty \frac{\pi^2\bar\ka\bke{\pi^2\bar\ka j^2 + \xi}}{\bke{\pi^2\bar\ka j^2 + \xi}^2 + \eta^2},\\
H_1 &=  - \frac{8(\ga-1)}{\pi\ga} \sum_{j=1}^\infty \frac{\pi^2\bar\ka\eta}{\bke{\pi^2\bar\ka j^2 + \xi}^2 + \eta^2},\\
\Xi_2 &= \rho_lR_*\bke{\xi^2 - \eta^2} + \frac{4\mu_l}{R_*}\, \xi - \frac{2\si}{R_*^2},\\
H_2 &= \rho_lR_*(2\xi\eta) + \frac{4\mu_l}{R_*}\, \eta. 
}
Setting real and imaginary parts of $Q$ equal to zero, we obtain
\EQ{\label{eq-real-imaginary-Q}
\text{real part: }& \frac1{\Rg T_\infty} \bke{\Xi_1\Xi_2 - H_1H_2} + 4\pi\, \frac{\rho_*}{R_*} = 0,\\
\text{imaginary part: }& \frac1{\Rg T_\infty} \bke{\Xi_1H_2 + H_1\Xi_2} = 0.
}
The real part in \eqref{eq-real-imaginary-Q} reads
\EQ{\label{eq-real-part-1}
0 &= \frac1{\Rg T_\infty} \bigg[ \bke{ \frac{4\pi}{3\ga} + \frac{8(\ga-1)}{\pi\ga}\sum_{j=1}^\infty \frac{\pi^2\bar\ka\bke{\pi^2\bar\ka j^2 + \xi}}{\bke{\pi^2\bar\ka j^2 + \xi}^2 + \eta^2} } \bke{\rho_lR_*\bke{\xi^2-\eta^2} + \frac{4\mu_l}{R_*}\, \xi - \frac{2\si}{R_*^2} }\\
&\qquad\qquad\qquad\qquad\qquad + \frac{8(\ga-1)}{\pi\ga} \sum_{j=1}^\infty \frac{\pi^2\bar\ka\eta^2}{\bke{\pi^2\bar\ka j^2 + \xi}^2 + \eta^2 } \bke{\rho_lR_*(2\xi) + \frac{4\mu_l}{R_*}}\bigg] + 4\pi\, \frac{\rho_*}{R_*}\\
&= \frac1{\Rg T_\infty} \bigg[ \bke{ \frac{4\pi}{3\ga} + \frac{8(\ga-1)}{\pi\ga}\sum_{j=1}^\infty \frac{\pi^4\bar\ka^2j^2}{\bke{\pi^2\bar\ka j^2 + \xi}^2 + \eta^2} } \bke{\rho_lR_*\bke{\xi^2-\eta^2} + \frac{4\mu_l}{R_*}\, \xi - \frac{2\si}{R_*^2} }\\
&\qquad\qquad\quad + \frac{8(\ga-1)}{\pi\ga} \sum_{j=1}^\infty \frac{\pi^2\bar\ka}{\bke{\pi^2\bar\ka j^2 + \xi}^2 + \eta^2 } \bke{\rho_lR_*\xi(\xi^2+\eta^2) + \frac{4\mu_l}{R_*}(\xi^2+\eta^2) - \frac{2\si}{R_*^2}\, \xi }\bigg] + 4\pi\, \frac{\rho_*}{R_*}.
}
When $\eta\neq0$, the imaginary part in \eqref{eq-real-imaginary-Q} reads
\EQ{\label{eq-imaginary-part-1}
0 &= \bke{\frac{4\pi}{3\ga} + \frac{8(\ga-1)}{\pi\ga} \sum_{j=1}^\infty \frac{\pi^2\bar\ka\bke{\pi^2\bar\ka j^2 + \xi}}{\bke{\pi^2\bar\ka j^2 + \xi}^2 + \eta^2}} \bke{\rho_lR_*(2\xi) + \frac{4\mu_l}{R_*} }\\
&\qquad\qquad\qquad\qquad\qquad - \frac{8(\ga-1)}{\pi\ga} \sum_{j=1}^\infty \frac{\pi^2\bar\ka}{\bke{\pi^2\bar\ka j^2 + \xi}^2 + \eta^2 } \bke{\rho_lR_*(\xi^2 - \eta^2) + \frac{4\mu_l}{R_*}\, \xi - \frac{2\si}{R_*^2} }\\
&= \bke{\frac{4\pi}{3\ga} + \frac{8(\ga-1)}{\pi\ga} \sum_{j=1}^\infty \frac{\pi^4\bar\ka^2 j^2}{\bke{\pi^2\bar\ka j^2 + \xi}^2 + \eta^2}} \bke{\rho_lR_*(2\xi) + \frac{4\mu_l}{R_*} }\\
&\qquad\qquad\qquad\qquad\qquad + \frac{8(\ga-1)}{\pi\ga} \sum_{j=1}^\infty \frac{\pi^2\bar\ka}{\bke{\pi^2\bar\ka j^2 + \xi}^2 + \eta^2 } \bke{\rho_lR_*(\xi^2 + \eta^2) + \frac{2\si}{R_*^2} }.
}
For $\eta\neq0$, the equation \eqref{eq-imaginary-part-1} implies 
\EQ{\label{eq-imaginary-part-2}
\rho_lR_*(2\xi) + \frac{4\mu_l}{R_*} < 0,
}
which gives
\EQ{\label{eq-xi-first-upper-bound}
\xi < - \frac{2\mu_l}{\rho_lR_*^2} \le 0,\qquad \eta\neq0.
}
The equation \eqref{eq-imaginary-part-1} also implies 
\EQ{\label{eq-imaginary-part-3}
\frac{8(\ga-1)}{\pi\ga} \sum_{j=1}^\infty \frac{\pi^2\bar\ka}{\bke{\pi^2\bar\ka j^2 + \xi}^2 + \eta^2 }
= - \bke{\frac{4\pi}{3\ga} + \frac{8(\ga-1)}{\pi\ga} \sum_{j=1}^\infty \frac{\pi^4\bar\ka^2 j^2}{\bke{\pi^2\bar\ka j^2 + \xi}^2 + \eta^2}} \dfrac{\rho_lR_*(2\xi) + \frac{4\mu_l}{R_*}}{\rho_lR_*(\xi^2+\eta^2) + \frac{2\si}{R_*^2}}.
}
Plugging \eqref{eq-imaginary-part-3} into the real part \eqref{eq-real-part-1}, we derive
\EQ{\label{eq-real-part-2}
0 &= \frac1{\Rg T_\infty} \bke{ \frac{4\pi}{3\ga} + \frac{8(\ga-1)}{\pi\ga}\sum_{j=1}^\infty \frac{\pi^4\bar\ka^2j^2}{\bke{\pi^2\bar\ka j^2 + \xi}^2 + \eta^2} }
\Bigg[ \rho_lR_*\bke{\xi^2-\eta^2} + \frac{4\mu_l}{R_*}\, \xi - \frac{2\si}{R_*^2} \\
&\quad - \dfrac{\bke{ \rho_lR_*(2\xi) + \frac{4\mu_l}{R_*} } \bke{ \rho_lR_*\xi(\xi^2+\eta^2) + \frac{4\mu_l}{R_*}(\xi^2+\eta^2) - \frac{2\si}{R_*^2}\, \xi } }{\rho_lR_*(\xi^2+\eta^2) + \frac{2\si}{R_*^2}} \Bigg] + 4\pi\, \frac{\rho_*}{R_*}\\
&=: \frac1{\Rg T_\infty} \bke{ \frac{4\pi}{3\ga} + \frac{8(\ga-1)}{\pi\ga}\sum_{j=1}^\infty \frac{\pi^4\bar\ka^2j^2}{\bke{\pi^2\bar\ka j^2 + \xi}^2 + \eta^2} } \La + 4\pi\, \frac{\rho_*}{R_*},
}
where $\La$ is the square bracket on the right hand side of the first equation.
A straightforward calculation shows that
\EQN{
&\bke{ \rho_lR_*(\xi^2+\eta^2) + \frac{2\si}{R_*^2} }\La \\
&= -\rho_l^2R_*^2 (\xi^2 + \eta^2)^2 - \frac{4\mu_l}{R_*} \bke{\rho_lR_*(2\xi) + \frac{4\mu_l}{R_*} } (\xi^2 + \eta^2) + \frac{2\si}{R_*^2} \bke{2\rho_lR_*\xi^2 + 2\,\frac{4\mu_l}{R_*}\, \xi - 2\rho_lR_*\eta^2} - \bke{\frac{2\si}{R_*^2}}^2\\
&> -\rho_l^2R_*^2 (\xi^2 + \eta^2)^2 - \frac{2\si}{R_*^2}\, 2\rho_lR_*(\xi^2+\eta^2) - \bke{\frac{2\si}{R_*^2}}^2 = -\bke{\rho_lR_*(\xi^2+\eta^2) + \frac{2\si}{R_*^2}}^2,
}
where we've used \eqref{eq-imaginary-part-2} and so $2\rho_lR_*\xi^2 + 2\,\frac{4\mu_l}{R_*}\, \xi = \xi \bke{\rho_lR_*(2\xi) + 2\,\frac{4\mu_l}{R_*} } > -2\rho_lR_*\xi^2$ in the last inequality.
This implies
\EQ{\label{eq-Lambda-ineq}
\La > -\bke{\rho_lR_*(\xi^2+\eta^2) + \frac{2\si}{R_*^2}}.
}
Using \eqref{eq-Lambda-ineq} in \eqref{eq-real-part-2}, we get
\EQ{\label{eq-real-part-3}
0 &> - \frac1{\Rg T_\infty} \bke{ \frac{4\pi}{3\ga} + \frac{8(\ga-1)}{\pi\ga}\sum_{j=1}^\infty \frac{\pi^4\bar\ka^2j^2}{\bke{\pi^2\bar\ka j^2 + \xi}^2 + \eta^2} } \bke{ \rho_lR_*(\xi^2+\eta^2) + \frac{2\si}{R_*^2} } + 4\pi\, \frac{\rho_*}{R_*}.
}
Suppose 
\EQ{\label{eq-xi-assumption-1}
\xi\ge-\th\pi^2\bar\ka,
} 
where $\th\in(0,1)$ to be chosen. Then $\xi\ge-\th\pi^2\bar\ka j^2$ for all $j=1,2,\ldots$.
We further assume that 
\EQ{\label{eq-xi-assumption-2}
\xi \ge - \sqrt{\frac{2p_{\infty,*}}{\rho_lR_*^2} + \frac{4\si}{\rho_lR_*^3} - \eta^2},
}
provided $\eta^2 \le \frac{2p_{\infty,*}}{\rho_lR_*^2} + \frac{4\si}{\rho_lR_*^3}$, so that $\rho_lR_*(\xi^2+\eta^2) + \frac{2\si}{R_*^2} \le \frac{2p_{\infty,*}}{R_*} + \frac{6\si}{R_*^2}$.
Then \eqref{eq-real-part-3} gives
\EQN{
0 &> - \frac1{\Rg T_\infty} \bke{ \frac{4\pi}{3\ga} + \frac{8(\ga-1)}{\pi\ga}\, \frac1{(1-\th)^2} \sum_{j=1}^\infty \frac1{j^2} } \bke{ \frac{2p_{\infty,*}}{R_*} + \frac{6\si}{R_*^2} } + 4\pi\, \frac{\rho_*}{R_*}.
}
Using $\sum_{j=1}^\infty j^{-2} = \pi^2/6$, one has
\[
3\Rg T_\infty\, \frac{\rho_*}{R_*} < \bkt{\frac1\ga + \bke{1-\frac1\ga} \frac1{(1-\th)^2}} \bke{\frac{2p_{\infty,*}}{R_*} + \frac{6\si}{R_*^2} },
\]
or, equivalently, using $\Rg T_\infty \rho_* = p_{\infty,*} + 2\si/R_*$,
\[
\th > 1 - \sqrt{\dfrac{1-\frac1\ga}{\frac{3p_{\infty,*}R_*+6\si}{2p_{\infty,*}R_*+6\si} - \frac1\ga}}.
\]
We simply choose 
\[
\th = 1 - \sqrt{\dfrac{1-\frac1\ga}{\frac{3p_{\infty,*}R_*+6\si}{2p_{\infty,*}R_*+6\si} - \frac1\ga}} \in (0,1)
\]
to reach a contradiction to \eqref{eq-xi-assumption-1} and \eqref{eq-xi-assumption-2}.
Therefore, we have for $\eta\in\R$ with $\eta\neq0$ and $\eta^2\le \frac{2p_{\infty,*}}{\rho_lR_*^2} + \frac{4\si}{\rho_lR_*^3}$ that
\[
\xi < - \min\bket{ \bke{1 - \sqrt{\dfrac{1-\frac1\ga}{\frac{3p_{\infty,*}R_*+6\si}{2p_{\infty,*}R_*+6\si} - \frac1\ga}}}\pi^2\bar\ka,\, \sqrt{\frac{2p_{\infty,*}}{\rho_lR_*^2} + \frac{4\si}{\rho_lR_*^3} - \eta^2}}.
\]
Combining \eqref{eq-xi-first-upper-bound}, for $\eta\in\R$ with $\eta\neq0$ and $\eta^2\le (1-\e) \bke{ \frac{2p_{\infty,*}}{\rho_lR_*^2} + \frac{4\si}{\rho_lR_*^3} }$ we have
\EQ{\label{eq-xi-upper-bound-eta-small}
\xi < - \max\bket{\frac{2\mu_l}{\rho_lR_*^2},\, \min\bket{ \bke{1 - \sqrt{\dfrac{1-\frac1\ga}{\frac{3p_{\infty,*}R_*+6\si}{2p_{\infty,*}R_*+6\si} - \frac1\ga}}}\pi^2\bar\ka,\, \sqrt{ \e\bke{ \frac{2p_{\infty,*}}{\rho_lR_*^2} + \frac{4\si}{\rho_lR_*^3}} }} }.
}

Now, we consider the case $\eta^2 > (1-\e) \bke{ \frac{2p_{\infty,*}}{\rho_lR_*^2} + \frac{4\si}{\rho_lR_*^3} }$. Since $\eta\neq0$, the imaginary part in \eqref{eq-real-imaginary-Q} gives the identity $\Xi_1 = - \frac{H_1}{H_2}\, \Xi_2$. Using this identity in the real part in \eqref{eq-real-imaginary-Q}, we derive
\[
4\pi\, \frac{\rho_*}{R_*}\, \Rg T_\infty H_2 = H_1\bke{\Xi_2^2 + H_2^2},
\]
which implies
\EQ{\label{eq-Xi-Eta-square}
4\pi\, \frac{\rho_*}{R_*}\, \Rg T_\infty \bke{\rho_lR_*(2\xi) + \frac{4\mu_l}{R_*} } = -\frac{8(\ga-1)}{\pi\ga} \sum_{j=1}^\infty \frac{\pi^2\bar\ka}{\bke{\pi^2\bar\ka j^2 + \xi}^2 + \eta^2}\, \bke{\Xi_2^2 + H_2^2}.
}
To find a positive lower bound for $\Xi_2^2 + H_2^2$,
note that
\[
\Xi_2^2 + H_2^2 = \abs{\rho_lR_*\tau^2 + \frac{4\mu_l}{R_*}\, \tau - \frac{2\si}{R_*^2}}^2 = \rho_l^2R_*^2 \abs{\tau - \tau_+}^2 \abs{\tau - \tau_-}^2,
\]
where 
\[
\tau_{\pm} = \dfrac{-\dfrac{4\mu_l}{R_*} \pm \sqrt{\bke{\dfrac{4\mu_l}{R_*}}^2 + 4\rho_lR_*\, \dfrac{2\si}{R_*^2}}}{2\rho_lR_*}
\]
are on the real axis.
By the triangular inequality $|\tau - \tau_{\pm}| > |\eta|$, and so
\[
\Xi_2^2 + H_2^2 > \rho_l^2 R_*^2 \eta^4.
\]
Therefore, \eqref{eq-Xi-Eta-square} yields
\EQ{\label{eq-eta-large-est}
4\pi\, \frac{\rho_*}{R_*}\, \Rg T_\infty \bke{\rho_lR_*(2\xi) + \frac{4\mu_l}{R_*} } < -\frac{8(\ga-1)}{\pi\ga} \sum_{j=1}^\infty \frac{\pi^2\bar\ka}{\pi^4\bar\ka^2 j^4 + \eta^2}\, \rho_l^2R_*^2\eta^4.
}
Since $\eta^2 > (1-\e) \bke{ \frac{2p_{\infty,*}}{\rho_lR_*^2} + \frac{4\si}{\rho_lR_*^3} }$ and $\frac{a^2}{\pi^4\bar\ka^2 j^4 + a}$ is increasing in $a$ for $a = \eta^2 \ge 0$,
we further derive 
\EQ{\label{eq-Xi-Eta-square-1}
4\pi\, \frac{\rho_*}{R_*}\, \Rg T_\infty& \bke{\rho_lR_*(2\xi) + \frac{4\mu_l}{R_*} } \\
&< -\frac{8(\ga-1)}{\pi\ga} \sum_{j=1}^\infty \frac{\pi^2\bar\ka}{\pi^4\bar\ka^2 j^4 + (1-\e)\bke{\frac{2p_{\infty,*}}{\rho_lR_*^2} + \frac{4\si}{\rho_lR_*^3}}}\, \rho_l^2R_*^2 (1-\e)^2 \bke{\frac{2p_{\infty,*}}{\rho_lR_*^2} + \frac{4\si}{\rho_lR_*^3}}^2\\
&= -\frac{8(\ga-1)}{\pi\ga}\, \frac1{\pi^2\bar\ka} \sum_{j=1}^\infty \frac1{j^4 + B^2}\, \rho_l^2R_*^2 (1-\e)^2 \bke{\frac{2p_{\infty,*}}{\rho_lR_*^2} + \frac{4\si}{\rho_lR_*^3}}^2,
}
where
\[
B:= \frac1{\pi^2\bar\ka}\sqrt{ (1-\e) \bke{\frac{2p_{\infty,*}}{\rho_lR_*^2} + \frac{4\si}{\rho_lR_*^3}}}.
\]
Using 
\EQN{
\sum_{j=1}^\infty \frac1{j^4 + B^2} 
&= \frac{e^{\frac{i\pi}4}\pi\cot\bke{e^{\frac{i\pi}4}\pi\sqrt{B}} + e^{\frac{3i\pi}4}\pi\cot\bke{e^{\frac{3i\pi}4}\pi\sqrt{B}} }{4B^{3/2}} - \frac1{2B^2}\\
&= \dfrac{\frac{2\pi}{\sqrt2} \bke{\frac{\cot\bke{\pi\sqrt{B/2}}\csch^2\bke{\pi\sqrt{B/2}} + \csc^2\bke{\pi\sqrt{B/2}} \coth\bke{\pi\sqrt{B/2}}}{\cot^2\bke{\pi\sqrt{B/2}} + \coth^2\bke{\pi\sqrt{B/2}}} }}{4B^{3/2}} - \frac1{2B^2}\\
& = \dfrac{\frac{2\pi}{\sqrt2} \bke{ \frac1{\pi\sqrt{B/2}} + \frac4{45} \bke{\pi\sqrt{B/2}}^3 + O\bke{\bke{\pi\sqrt{B/2}}^6}}}{4B^{\frac32}} - \frac1{2B^2}\\
& = \frac{\pi^4}{90} + O\bke{B^{3/2}}\ \text{ for }B\ll1,
}
we have from \eqref{eq-Xi-Eta-square-1} that
\EQN{
&4\pi\, \frac{\rho_*}{R_*}\, \Rg T_\infty \bke{\rho_lR_*(2\xi) + \frac{4\mu_l}{R_*} } \\
&< -\frac{8(\ga-1)}{\pi\ga}\, \frac1{\pi^2\bar\ka} \bke{\frac{\pi^4}{90} + O\bke{\bke{\frac1{\pi^2\bar\ka}\sqrt{ (1-\e) \bke{\frac{2p_{\infty,*}}{\rho_lR_*^2} + \frac{4\si}{\rho_lR_*^3}} }}^{3/2}}}  \rho_l^2R_*^2  (1-\e)^2 \bke{\frac{2p_{\infty,*}}{\rho_lR_*^2} + \frac{4\si}{\rho_lR_*^3}}^2.
}
Consequently, we have for $\eta^2 > (1-\e) \bke{ \frac{2p_{\infty,*}}{\rho_lR_*^2} + \frac{4\si}{\rho_lR_*^3} }$ that
\EQ{\label{eq-xi-upper-bound-eta-large}
\xi< -\frac{2\mu_l}{\rho_lR_*^2} - \frac{\Rg T_\infty \rho_*}{\pi^4\bar\ka \rho_lR_*^2} \bke{1-\frac1\ga} \bke{\frac{ 4(1-\e)^2 \pi^4}{90} + O\bke{\bke{\frac{ (1-\e)^{4/3}}{\pi^2\bar\ka}\sqrt{ (1-\e)\bke{\frac{2p_{\infty,*}}{\rho_lR_*^2} + \frac{4\si}{\rho_lR_*^3}} }}^{3/2}}},
}
where $\Rg T_\infty \rho_* = p_{\infty,*} + 2\si/R_*$ is used.

It remains to consider the case $\eta=0$. 
We first show that $\xi<0$.
It is a direct consequence of the asymptotic stability, Theorem \ref{thm-asymp-stab-linear}. Here we give a self-contained proof for the interested readers.
Suppose for the sake of contradiction that $\xi\ge0$ then
\EQN{
Q(\xi) 
&= \frac1{\Rg T_\infty} \bke{\frac{4\pi}{3\ga} + \frac{8(\ga-1)}{\pi\ga} \sum_{j=1}^\infty \frac{\pi^2\bar\ka}{\pi^2\bar\ka j^2 + \xi} } \bke{\rho_lR_*\xi^2 + \frac{4\mu_l}{R_*}\, \xi }\\
&\quad\quad\quad - \frac{2\si}{\Rg T_\infty R_*^2} \bke{\frac{4\pi}{3\ga} + \frac{8(\ga-1)}{\pi\ga} \sum_{j=1}^\infty \frac{\pi^2\bar\ka}{\pi^2\bar\ka j^2 + \xi} } + 4\pi\,\frac{\rho_*}{R_*},
}
where the second line is greater than
\[
 - \frac{2\si}{\Rg T_\infty R_*^2} \bke{\frac{4\pi}{3\ga} + \frac{8(\ga-1)}{\pi\ga} \sum_{j=1}^\infty \frac{\pi^2\bar\ka}{\pi^2\bar\ka j^2} } + 4\pi\,\frac{\rho_*}{R_*}
=  \frac{8\pi\rho_*}{3R_*} + \frac{4\pi p_{\infty,*}}{3\Rg T_\infty R_*} > 0.
\]
Here the identities $\sum_{j=1}^\infty j^{-2} = \frac{\pi^2}6$ and $\Rg T_\infty\rho_* = p_{\infty,*} + \frac{2\si}{R_*}$ are used. 
This yields $Q(\xi)>0$, which contradicts to the fact that $Q(\tau)=0$. Thus, we have $\xi<0$ for $\eta=0$.

Now we search for a negative upper bound for $\xi$ when $\eta=0$.
Suppose 
\EQ{\label{eq-xi-assumption-3}
\xi \ge -\th_0\pi^2\bar\ka,
}
where $0<\th_0<1$ to be chosen. Suppose further that 
\EQ{\label{eq-xi-assumption-4}
\xi > \frac{-\frac{4\mu_l}{R_*} + \sqrt{\De}}{2\rho_lR_*}\quad \text{ if }\De:= \bke{ \frac{4\mu_l}{R_*} }^2 - 4\rho_lR_* \bke{ \frac{2p_{\infty,*}}{R_*} + \frac{4\si}{R_*^2}} >0
}
such that 
\EQ{\label{eq-xi-assumption-4-consequence}
\rho_lR_*\xi^2 + \frac{4\mu_l}{R_*}\, \xi - \frac{2\si}{R_*^2} \ge -\frac{2p_{\infty,*}}{R_*} - \frac{6\si}{R_*^2}.
} 
Note that  the inequality \eqref{eq-xi-assumption-4-consequence} always holds when $\De\le0$.
Then
\EQN{
0 = Q(\xi) &= \frac1{\Rg T_\infty} \bke{\frac{4\pi}{3\ga} + \frac{8(\ga-1)}{\pi\ga} \sum_{j=1}^\infty \frac{\pi^2\bar\ka}{\pi^2\bar\ka j^2 + \xi} } \bke{\rho_lR_*\xi^2 + \frac{4\mu_l}{R_*}\, \xi - \frac{2\si}{R_*^2}} + 4\pi\,\frac{\rho_*}{R_*}\\
&> - \frac1{\Rg T_\infty} \bke{\frac{4\pi}{3\ga} + \frac{8(\ga-1)}{\pi\ga(1-\th_0)} \sum_{j=1}^\infty \frac{\pi^2\bar\ka}{\pi^2\bar\ka j^2} } \bke{ \frac{2p_{\infty,*}}{R_*} + \frac{6\si}{R_*^2}} + 4\pi\,\frac{\rho_*}{R_*}\\
&= - \frac1{\Rg T_\infty} \bke{\frac{4\pi}{3\ga} + \frac{4\pi}3 \bke{1-\frac1\ga} \frac1{1-\th_0} } \bke{ \frac{2p_{\infty,*}}{R_*} +  \frac{6\si}{R_*^2}} + 4\pi\,\frac{\rho_*}{R_*}
}
So
\[
4\pi\Rg T_\infty\, \frac{\rho_*}{R_*} < \bke{\frac{4\pi}{3\ga} + \frac{4\pi}3 \bke{1-\frac1\ga} \frac1{1-\th_0} } \bke{\frac{2p_{\infty,*}}{R_*} + \frac{6\si}{R_*^2} },
\]
or, equivalently,
\[
\th_0 > 1 - \dfrac{1-\dfrac1\ga }{\dfrac{3\Rg T_\infty\, \frac{\rho_*}{R_*}}{\frac{2p_{\infty,*}}{R_*} + \frac{6\si}{R_*^2}} - \dfrac1{\ga} } 
= 1 - \dfrac{1-\dfrac1\ga }{ \frac{3p_{\infty,*}R_*+6\si}{2p_{\infty,*}R_*+6\si} - \dfrac1{\ga} },
\]
where $\Rg T_\infty \rho_* = p_{\infty,*} + \frac{2\si}{R_*}$ has been used in the last equation.
We then choose 
\[
\th_0 = 1 - \dfrac{1-\dfrac1\ga }{ \dfrac{3p_{\infty,*}R_*+6\si}{2p_{\infty,*}R_*+6\si} - \dfrac1{\ga} } \in (0,1)
\]
to reach a contradiction to \eqref{eq-xi-assumption-3} and \eqref{eq-xi-assumption-4}. 
Hence we derive for $\eta=0$
\EQ{\label{eq-xi-upper-bound-eta-zero}
\xi < 
\begin{cases}
- \min\bket{ \bke{ 1 - \dfrac{1-\dfrac1\ga }{ \dfrac{3p_{\infty,*}R_*+6\si}{2p_{\infty,*}R_*+6\si} - \dfrac1{\ga} } }\pi^2\bar\ka,\, \dfrac{ \dfrac{4\mu_l}{R_*} - \sqrt{\De}}{2\rho_lR_*} }&\ \text{ if }\De>0,\\
- \bke{ 1 - \dfrac{1-\dfrac1\ga }{ \dfrac{3p_{\infty,*}R_*+6\si}{2p_{\infty,*}R_*+6\si} - \dfrac1{\ga} } }\pi^2\bar\ka&\ \text{ if }\De\le0.
\end{cases}
}
Combining the upper bounds \eqref{eq-xi-upper-bound-eta-small}, \eqref{eq-xi-upper-bound-eta-large}, and \eqref{eq-xi-upper-bound-eta-zero} for different cases of $\eta$ and using the identity $p_* = \Rg T_\infty\rho_* = p_{\infty,*} + \frac{2\si}{R_*}$, the lemma follows.
\end{proof}

Let $\mathcal L_-$ be defined in \eqref{eq-def-L-}.
To show the operator $-\mathcal L_-$ is sectorial, we derive the following finer estimate of the location of the roots of $Q(\tau)$.

\begin{lemma}\label{lem-sectorial-bound}
There exists $\phi\in(\pi/2,\pi)$ such that the roots of the meromorphic function $Q(\tau)$ defined in \eqref{eq-Q-def} lie in the sector
\[
S_\phi = \{\tau\in\CC : \phi \le |{\rm arg}(\tau)|\le \pi\}.
\]
\end{lemma}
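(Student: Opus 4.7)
The plan is to combine the negative real-part bound from Lemma~\ref{lem-negative-upper-bound} with a growth estimate of $|\xi|$ in terms of $|\eta|$, so that the ratio $|\eta|/|\xi|$ stays bounded for every root $\tau=\xi+i\eta$ of $Q$. Once such a bound $|\eta|/|\xi|\le \tan(\pi-\phi)$ is established for some $\phi\in(\pi/2,\pi)$, the roots lie in $S_\phi$, proving the lemma.

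First I would split the analysis according to the size of $|\eta|$. In the \emph{moderate regime} $|\eta|\le M$ (for an $M$ to be chosen below), Lemma~\ref{lem-negative-upper-bound} immediately yields $\xi\le -\be$, so $|\eta|/|\xi|\le M/\be$ and
\[
|\arg\tau|\ \ge\ \pi-\arctan(M/\be)\ >\ \pi/2.
\]
This case costs nothing beyond what has already been proved.

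In the \emph{large-$|\eta|$ regime}, specifically when $\eta^2 > (1-\ve)\bigl(\tfrac{2p_{\infty,*}}{\rho_lR_*^2}+\tfrac{4\si}{\rho_lR_*^3}\bigr)$, the threshold used in the proof of Lemma~\ref{lem-negative-upper-bound}, I would revisit inequality \eqref{eq-eta-large-est}. Dividing through and isolating $\xi$ gives
\[
|\xi|\ \ge\ \frac{2\mu_l}{\rho_lR_*^2}+\frac{\pi(\ga-1)\,\bar\ka\,\rho_lR_*^2}{\ga\,\Rg T_\infty\,\rho_*}\,\eta^4\sum_{j=1}^\infty\frac{1}{\pi^4\bar\ka^2 j^4+\eta^2}.
\]
Retaining only the $j=1$ term yields a lower bound of the form $c\,\eta^4/(c'+\eta^2)$, which is $\gtrsim \eta^2$ for $|\eta|$ sufficiently large. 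Consequently $|\xi|\gtrsim \eta^2$, and $|\eta|/|\xi|=O(1/|\eta|)\to 0$ as $|\eta|\to\infty$. In particular, there exists $M_0>0$ such that $|\eta|\ge M_0$ forces $|\eta|/|\xi|\le 1$, i.e.\ $|\arg\tau|\ge 3\pi/4$.

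Setting $M=M_0$ and $\phi=\min\{3\pi/4,\ \pi-\arctan(M_0/\be)\}\in(\pi/2,\pi)$, the two regimes together cover every root and yield $|\arg\tau|\ge\phi$, as required. The main obstacle is supplying the quantitative lower bound on $\sum_j(\pi^4\bar\ka^2j^4+\eta^2)^{-1}$ that produces the quadratic growth $|\xi|\gtrsim \eta^2$; the crude one-term lower bound already suffices, so the remaining work is just selecting $M_0$ and $\phi$ in terms of the constants appearing in Lemma~\ref{lem-negative-upper-bound}.
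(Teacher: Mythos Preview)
Your proposal is correct and follows essentially the same route as the paper: split into bounded $|\eta|$ (handled by Lemma~\ref{lem-negative-upper-bound}) and large $|\eta|$ (handled by \eqref{eq-eta-large-est}), the latter yielding $|\xi|\gtrsim\eta^2$ so that the roots sit to the left of a sideways parabola, hence in a sector. The paper's proof is terser---it simply observes the parabola and stops---while you spell out the choice of $M_0$ and $\phi$; one minor slip is the extra factor of $\pi$ in your displayed constant (the correct coefficient is $(\ga-1)\bar\ka\rho_lR_*^2/(\ga\Rg T_\infty\rho_*)$), but since only positivity is needed this is harmless.
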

\begin{proof}
The proof is based on the calculations in the proof of Lemma \ref{lem-negative-upper-bound}.
For simplicity, we choose $\ve=1/2$. 
Let $\tau = \xi + i\eta$ be a root of $Q(\tau)$.
It follows from \eqref{eq-xi-upper-bound-eta-small} and \eqref{eq-xi-upper-bound-eta-zero} that there exists $C>0$ such that $\xi<-C$ for $\eta^2\le \frac{p_{\infty,*}}{\rho_lR_*^2} + \frac{\si}{\rho_lR_*^3}$.
For $\eta^2> \frac{p_{\infty,*}}{\rho_lR_*^2} + \frac{\si}{\rho_lR_*^3}$, \eqref{eq-eta-large-est} implies that
\[
4\pi\, \frac{\rho_*}{R_*}\, \Rg T_\infty \bke{\rho_lR_*(2\xi) + \frac{4\mu_l}{R_*} } < -\frac{8(\ga-1)}{\pi\ga} \sum_{j=1}^\infty \frac{\pi^2\bar\ka B }{\pi^4\bar\ka^2 j^4 + B }\, \rho_l^2R_*^2\eta^2,\quad
B = \frac{p_{\infty,*}}{\rho_lR_*^2} + \frac{\si}{\rho_lR_*^3}.
\]
Therefore, $\tau = \xi + i\eta$ lies to the left  of a sideways parabola for large $|\eta|$. 
This proves the lemma. 
\end{proof}







\end{document}